\pgfplotsset{compat=1.18}
\theoremstyle{thmstyleone}    % bold head, italic body
\newtheorem{theorem}{Theorem}[section]
\newtheorem{lemma}{Lemma}[section]
\theoremstyle{thmstyletwo}    % bold head, upright body
\newtheorem{remark}[theorem]{Remark}
\newtheorem{assumption}{Assumption}[section]
\crefname{algocf}{Algorithm}{Algorithms}
\crefname{equation}{}{}
\Crefname{equation}{}{}
\crefname{assumption}{Assumption}{Assumptions}
\crefname{theorem}{Theorem}{Theorems}
\crefname{lemma}{Lemma}{Lemmas}
\crefname{proposition}{Proposition}{Propositions}
\crefname{corollary}{Corollary}{Corollaries}
\crefname{definition}{Definition}{Definitions}
\crefname{remark}{Remark}{Remarks}
\crefname{figure}{Figure}{Figures}
\crefname{table}{Table}{Tables}
\crefname{section}{Section}{Sections}
\crefname{subsection}{Subsection}{Subsections}
\newcommand{\RR}{\mathbb{R}} \newcommand{\NN}{\mathbb{N}}
\pgfplotsset{
    compat=1.18,
    every axis/.append style={
        width=10cm,
        height=6cm,
        grid=both,
        grid style={line width=.1pt, draw=gray!10},
        major grid style={line width=.2pt,draw=gray!50}
    }
}
\setlist[enumerate,1]{label=(\alph*)}
\begin{document}

% ============================= Metadata =============================
\title[Inexact General Descent for DECO]{An Inexact General Descent Method with Applications in Differential Equation-Constrained Optimization}

% Star (*) marks the corresponding author per sn-jnl template
\author*[1,2]{\fnm{Humberto Gimenes} \sur{Macedo}\orcid{https://orcid.org/0000-0002-0858-6283}}\email{gimenes.humberto@unifesp.br}
\author[1]{\fnm{Luís Felipe} \sur{Bueno}\orcid{https://orcid.org/0000-0002-8820-6606}}\email{lfelipebueno@gmail.com}

\affil[1]{\orgname{Universidade Federal de São Paulo (UNIFESP)},
\orgaddress{\street{Av. Cesare Mansueto Giulio Lattes, 1201 — Eugênio de Melo},
\city{São José dos Campos}, \state{SP}, \country{Brazil}}}

\affil[2]{\orgname{Instituto Tecnológico de Aeronáutica (ITA)},
\orgaddress{\street{Praça Marechal Eduardo Gomes, 50 — Vila das Acácias},
\city{São José dos Campos}, \state{SP}, \country{Brazil}}}

% ============================= Abstract & Keywords =============================
\abstract{
In many applications, gradient evaluations are inherently approximate, motivating the development of optimization methods that remain reliable under inexact first-order information. A common strategy in this context is adaptive evaluation, whereby coarse gradients are used in early iterations and refined near a minimizer. This is particularly relevant in differential equation–constrained optimization (DECO), where discrete adjoint gradients depend on iterative solvers. Motivated by DECO applications, we propose an inexact general descent framework and establish its global convergence theory under two step-size regimes. For bounded step sizes, the analysis assumes that the error tolerance in the computed gradient is proportional to its norm, whereas for diminishing step sizes, the tolerance sequence is required to be summable. The framework is implemented through inexact gradient descent and an inexact BFGS-like method, whose performance is demonstrated on a second-order ODE inverse problem and a two-dimensional Laplace inverse problem using discrete adjoint gradients with adaptive accuracy. Across these examples, adaptive inexact gradients consistently reduced optimization time relative to fixed tight tolerances, while incorporating curvature information further improved overall efficiency.
}

% Use 4–6 keywords (COAP page requirement)
\keywords{Inexact optimization, Gradient-based methods, DE-constrained optimization, Discrete adjoint method}

\maketitle

% ============================= Main Text =============================
% NOTE (COAP page): For submission, avoid \input and paste content inline.
% While drafting, it's fine to keep \input. I can flatten this later.
\section{Introduction}\label{sec:intro}

    Conventionally, convergence analyses of gradient-based algorithms assume the availability of exact information, encompassing both zeroth-order and first-order information. Nevertheless, in various settings, the acquisition of such information is subject to error. These errors can arise, for example, when the objective function lacks an analytic representation, exhibits stochasticity, is affected by finite-precision errors from floating-point arithmetic, or constitutes a smoothed approximation of an inherently non-smooth function. Moreover, errors can also stem from the tolerance-limited accuracy of iterative methods used to compute functional values, gradients, or broader search directions. This phenomenon is especially prevalent in differential equation-constrained optimization (DECO) problems. In this case, the objective function evaluation requires one or more numerical simulations, typically implemented through iterative procedures. Likewise, when the discrete adjoint method is used, computing the gradient normally involves the iterative solution of the adjoint equation. Consequently, in such contexts, both the objective function and its gradient are routinely subject to inexactness. Viewed more broadly, inexactness in numerical analysis arises along two complementary channels, namely (a) inexact solves, where subproblems or linear and nonlinear systems are solved only approximately, and (b) inexact information, where objective, constraint, and/or their gradients evaluations are themselves approximate.

    Research on nonlinear equations has predominantly focused on channel (a). Foundational work introduced and developed inexact Newton methods to reduce the computational cost associated with solving Newton equations, showing that exact solves are often unnecessary when the current iterate is far from the solution \cite{dembo_inexact_1982,eisenstat_globally_1994,eisenstat_choosing_1996}. This line of study has since been extended to quasi-Newton methods, where Hessian approximations replace the exact Jacobian and the resulting system is solved only approximately. Applications of such techniques to sparse nonlinear systems of equations have been explored \cite{bergamaschi_inexact_2001}, and subsequent studies have addressed issues of global convergence and constraints such as non-negativity \cite{birgin_globally_2003,arias_global_2020}. Recent investigations continue to advance the theory and practice of such inexact methods \cite{arias_inexact_2023,pietrus_inexact_2024}.

    From an optimization perspective, both channels~(a) and~(b) have been systematically investigated. Natural extensions of inexact Newton-type techniques to optimization arise from viewing first-order optimality systems as nonlinear equations \cite{martinez_1994}. In unconstrained problems, this leads to inexact Newton and quasi-Newton schemes, while for constrained problems, it gives rise to inexact Sequential Quadratic Programming (SQP) methods. Among these, inexact SQP methods have been extensively studied due to their broad applicability across diverse classes of problems. These methods interpret the resolution of first-order optimality conditions as a sequence of quadratic subproblems solved only approximately, thereby introducing inexactness primarily through channel~(a) \cite{leibfritz_inexact_1999,byrd_inexact_2008,curtis_inexact_2014}.    Inexactness of type~(a) also arises when approximately solving subproblems across several optimization frameworks, including higher-order methods \cite{birgin_2017}, interior-point strategies \cite{curtis_2010}, augmented Lagrangian formulations \cite{andreani_2008}, and in the broader class of inexact restoration techniques \cite{bueno_2020}.

    Regarding chanel (b),  trust-region and inexact restoration frameworks have been rcently developed to manage accuracy in optimization problems where inexactness is intrinsic, such as finite-sum minimization, in which the objective function is typically approximated via data subsampling to reduce computational cost \cite{curtis_2019,bellavia_2020,bellavia_2023,cao_2024}.
    Because such approaches enable the scalable training of large-scale AI models, this has become an active and rapidly growing area of research within the optimization community.
    
     On the other hand, the investigation of the effect of inexactness in traditional methods, which constitute the foundation of many modern optimization algorithms, dates back to a few decades earlier.
The analysis by \citet{polyak_introduction_1987} examined how gradient inaccuracies affect the steepest descent method in smooth, convex, unconstrained settings, and \citet{bertsekas_gradient_2000} later established convergence guarantees when the error is bounded by the step size times a gradient-dependent term.
    The accelerated framework of \citet{nesterov_method_1983} was extended by \citet{daspremont_smooth_2008} to inexact gradients without loss of optimal complexity.   
    
    \citet{devolder_first-order_2014} formalized the inexact first-order oracle and analyzed primal, dual, and accelerated methods with oracle errors in smooth and convex regimes. The analysis revealed that the accelerated gradient method accumulates errors linearly with the number of iterations, whereas the standard gradient method is not affected by this issue. Building on these findings, \citet{devolder_intermediate_2013} proposed the intermediate gradient method, which integrates the strengths of both prior approaches. It achieves a convergence rate comparable to the accelerated gradient method while maintaining the robustness of the standard gradient method in the presence of inexact oracles. The scope of the intermediate gradient method was subsequently broadened to address problems involving stochastic oracles in \cite{dvurechensky_stochastic_2016}and \citet{ghadimi_accelerated_2016}. This analysis was further expanded to include constrained optimization problems \cite{ghadimi_mini-batch_2016}.

     \citet{khanh_inexact_2023} developed an inexact reduced gradient method for nonconvex, continuously differentiable problems, both with and without Lipschitz continuous gradients, establishing convergence under various step-size strategies. This research direction was further explored in \citet{khanh_new_2024}, where a new inexact gradient method for smooth functions was introduced.

    Inexact trust-region and multilevel SQP algorithms have also been developed for  DECO applications, where both the state and adjoint equations, as well as the quadratic subproblems, are solved only approximately, with adaptive mechanisms controlling inexactness and discretization accuracy \cite{heinkenschloss_inexact_2008,ziems_adaptive_2011}. In the same context, \citet{brown_adaptive_2018,brown_inexactly_2018,brown_effect_2022} analyzed how inexactness in discrete adjoint computations impacts gradient methods. They argue that when the state and adjoint equations are solved with an adaptive tolerance proportional to the gradient norm, the inexact gradient method achieves the same convergence rate as its exact counterpart. In these works there is a connection between inaccuracies in solving systems of equations and in the gradient of the functions of the optimization problem.

    As we can notice, inexact gradient-based algorithms have become increasingly important, driven by both theoretical challenges and practical demands. In response, we introduce a novel inexact general descent method and demonstrate its applications to DECO problems, a domain where inexactness is inherent and remains a central focus of research. Our approach provides convergence proofs for an inexact version of classical general directions strategies under both bounded and diminishing step-size rules. The analysis focuses on these more restricted step-size strategies because, in DECO settings, line-search procedures are not readily applicable due to inexactness in the objective function. 
    
    Among the works discussed so far, the most closely related are \cite{bertsekas_gradient_2000}, \cite{devolder_first-order_2014}, \cite{brown_effect_2022}, and \cite{khanh_inexact_2023}, yet our contributions differ in several key aspects. \citet{bertsekas_gradient_2000} formulate the inexact conditions for the general direction using the unknown exact gradients, whereas our method applies these conditions directly with the available inexact gradients. The focus of \citet{devolder_first-order_2014} is convex, first-order (gradient) settings with fixed search directions, rather than flexible direction schemes for general smooth problems, and it does not address DECO applications. An unconventional strategy is adopted in \citet{brown_effect_2022}, where the convergence is done assuming a strong connection between the iterates generated by an exact algorithm and its inexact counterpart. In contrast, we follow the traditional approach by directly proving the convergence of  the  inexact methods. The analysis in \citet{brown_effect_2022} also leaves the step-size rules unspecified and restricts some results to the gradient method.  Finally, the search direction in \citet{khanh_inexact_2023} is also limited to being proportional to the inexact gradient, whereas our approach admits fully general directions. Algorithmically, our method is also simpler, arising as a straightforward extension of classical general descent methods.

    The remainder of the paper is organized as follows. \cref{sec:deco_fram} reviews the DECO setting, its applications, standard solution approaches, and alternative formulations. \cref{sec:std_cas} introduces two inverse DECO case studies, specifying the objectives and the discretizations of the governing equations. \cref{sec:conv_the} states preliminary mathematical facts and then develops the proposed inexact general descent framework with a convergence analysis. \cref{subsec:adj_error} details the adaptive test-and-tighten scheme used to compute discrete adjoint gradients with known and controlled accuracy. \cref{sec:num_res} reports numerical experiments on the two case studies and evaluates the effectiveness of the methods. Finally, \cref{sec:fin_rem} summarizes the main findings and outlines directions for future research.

\section{DECO Framework and Formulations}\label{sec:deco_fram}
    DECO combines high-fidelity numerical simulations with optimization algorithms to solve problems governed by differential equations \cite{hicken_arnoldi-based_2015,antil_frontiers_2018}. Applications include inverse problems, design optimization, and optimal control. Inverse problems seek to identify unknown parameters in the governing equations from limited measurements of the system state, such as estimating thermal conductivity from sparse temperature data. Design optimization aims to enhance system performance through systematic modifications of geometry, configuration, or operating conditions. A classical example is aerodynamic shape optimization, where the goal may be to reduce drag or maximize the lift-to-drag ratio of an aircraft wing. Optimal control focuses on determining control inputs that steer a system toward a desired state while satisfying physical and operational constraints and optimizing a performance criterion.  
    
    To address such applications, two principal approaches to DECO have been developed: optimize--then--discretize (OD) and discretize--then--optimize (DO) \cite{liu_non-commutative_2019}. In the OD approach, one derives continuous optimality conditions, such as the Karush--Kuhn--Tucker (KKT) conditions, and subsequently discretizes them for numerical implementation. By contrast, the DO approach first discretizes the governing equations and then derives the optimality conditions directly from the discrete system. These two approaches are not equivalent in general and can lead to distinct solutions \cite{collis_analysis_2024}.  
    
    Within the DO approach, the control variables $\vb{z}\in\RR^m$ and state variables $\vb{y}\in\RR^n$ are coupled through the discrete governing equations
    \begin{equation}
        \label{eq:R=0}
        \vb{R}(\vb{z}, \vb{y}) = \vb{0}_{\RR^n},
    \end{equation}
    where $\vb{R} : \RR^m \times \RR^n \to \RR^n$ denotes the residual vector of the discretized system. The equations in \cref{eq:R=0} are also known as state equations. Depending on how \cref{eq:R=0} is enforced, two distinct formulations arise \cite{haftka_simultaneous_1985,cramer_problem_1994}.  
    
    In the simultaneous analysis and design (SAND) formulation, both the state and control variables are treated as unknowns, and the state equation is imposed as a constraint. This yields the following full-space optimization problem:
    \begin{mini*}
        {}{F(\mathbf{z}, \mathbf{y}), \qquad \mathbf{z} \in \mathbb{R}^m, \ \mathbf{y} \in \mathbb{R}^n,}
        {}{} %\label{eq:opt_p_1}
        \addConstraint{\mathbf{R}(\mathbf{z}, \mathbf{y})}{=\mathbf{0}_{\mathbb{R}^n},}
    \end{mini*}
    where $F:\mathbb{R}^{m}\times\mathbb{R}^n \to \mathbb{R}$ is a continuously differentiable objective function.  
    Although SAND provides a unified framework, its application to high-fidelity simulations is often hindered by the computational cost and memory demands of treating both the state and control variables simultaneously \cite{hicken_comparison_2013}.

    Despite the state equation may not admit a unique solution in general \cite{jameson_airfoils_1991}, under suitable regularity conditions the implicit function theorem guarantees local uniqueness of the state for a given control \cite{borggaard_computational_1998}. In this context, the state variables can be eliminated, which leads to the nested analysis and design (NAND) formulation. In this work, we focus on residual functions that are affine in the state variable and associated with well-conditioned, nonsingular matrices, so that a global mapping can be defined. Thus, we can find a mapping
    \begin{equation*}
       % \label{eq:r(z,uz)}
        \vb{u} : \mathbb{R}^m \to \mathbb{R}^n
        \quad \text{ such that } \quad
        \vb{R}\big(\vb{z}, \vb{u}(\vb{z})\big) = \vb{0}_{\mathbb{R}^n},
    \end{equation*}
    and, by substituting $\vb{y} = \vb{u}(\vb{z})$ into the objective function, we obtain a reduced-space optimization problem
    \begin{mini}
      {\vb{z} \in \mathbb{R}^m}
      {\widetilde{F}(\vb{z})
        = F\left(\vb{z},\vb{u}(\vb{z})\right)}
      {\label{eq:opt_p_2}}{},
    \end{mini}
    where $\widetilde{F}: \mathbb{R}^{m} \to \mathbb{R}$ denotes the reduced objective function, assumed to be $L$-smooth and bounded from below in this work.  This formulation closely resembles the classical Generalized Reduced Gradient (GRG) methods \cite{abadie_GRG_1969,lasdon_1978} when the state variables are computed exactly. In practice, however, this is rarely the case, since the state dependence on the controls is obtained through numerical solvers, introducing inexactness into both the objective and its gradient evaluations.
    SAND and NAND are therefore fundamentally distinct and need not produce equivalent solutions \cite{heinkenschloss_interface_1999}.  

    With the formulation established, attention turns to solution strategies. Approaches fall broadly into two classes: gradient-free and gradient-based methods \cite{zingg_comparative_2008}. Gradient-free approaches rely solely on function evaluations and are effective for nonsmooth or noisy objectives, but their high computational cost typically restricts them to small- or medium-scale problems \cite{holst_aerodynamic_2001,chernukhin_multimodality_2013,he_shape_2014}. Gradient-based methods, by contrast, exploit derivative information, scale efficiently to high-dimensional control spaces, and are thus the preferred choice for large-scale DECO \cite{secco_component-based_2018,secco_rans-based_2019}. Their performance, however, depends critically on the accurate and efficient computation of sensitivities. Several techniques are available for this purpose, including finite differences, complex-step differentiation \cite{squire_using_1998}, adjoint methods \cite{bryson_applied_1975}, and algorithmic differentiation (AD) \cite{wengert_simple_1964}. Among these, adjoint methods are particularly advantageous in large-scale settings because their cost is essentially independent of the number of control variables \cite{kenway_effective_2019}. In practice, hybrid adjoint--AD strategies are increasingly adopted to automate derivative computation and further enhance efficiency \cite{mader_adjoint_2008}.

    As noted above, adjoint methods enable scalable gradient-based optimization. In fact, the discrete adjoint method arises naturally when deriving the first-order necessary optimality conditions for \cref{eq:opt_p_2}. Specifically, applying the chain rule to $\widetilde{F}$ yields
    \begin{equation}
        \label{eq:opt_nand}
        \grad \widetilde{F}(\vb{z}) \equiv 
        \grad_{\vb{z}} F \!\left(\vb{z},\vb{u}(\vb{z})\right) -  
        \left[\pdv{\vb{R}}{\vb{z}}\bigg\rvert_{(\vb{z}, \vb{u}(\vb{z}))}\right]^\top \bm{\psi}
        = \vb{0}_{\mathbb{R}^m},
    \end{equation}
    where $\pdv{\vb{R}}{\vb{z}} : \RR^{m}\times\RR^{n} \to \RR^{\,n \times m}$ and  $\grad_{\vb{z}} F : \RR^{m}\times\RR^{n} \to \RR^{m}$ represent the partial Jacobian and gradient of $\vb{R}$ and $F$ with respect to $\vb{z}$, respectively, and the adjoint variable $\bm{\psi} \in \RR^n$ is obtained by solving the so called adjoint equation
    \begin{equation}
        \label{eq:adj_eq}
        \vb{R}_{\psi}\!\left(\boldsymbol{\psi};\vb{z}, \vb{u}(\vb{z})\right) \equiv 
        \left[\pdv{\vb{R}}{\vb{y}}\bigg\rvert_{(\vb{z}, \vb{u}(\vb{z}))}\right]^\top \bm{\psi} 
        - \grad_{\vb{y}} F \!\left(\vb{z},\vb{u}(\vb{z})\right)
        = \vb{0}_{\mathbb{R}^n}.
    \end{equation}
    In this context, $\pdv{\vb{R}}{\vb{y}}:\RR^{m}\times\RR^{n}\to\RR^{n\times n}$ and $\grad_{\vb{y}} F:\RR^{m}\times\RR^{n}\to\RR^n$ denote the partial Jacobian and gradient with respect to $\vb{y}$. Since the state and adjoint equations are enforced in the exact NAND formulation, the optimizer needs only to drive \eqref{eq:opt_nand} to zero.

    In summary, the discrete adjoint method provides an efficient and scalable way to compute gradients in DECO problems. However, in practical implementations, the state equation is often not solved exactly, especially during the early iterations when high accuracy is unnecessary. Consequently, the state $\vb{u}(\vb{z})$ is not computed exactly, and the quantities involved in the gradient and adjoint equations \eqref{eq:opt_nand}-\eqref{eq:adj_eq} become inexact. In addition, the adjoint equation may itself be solved only approximately, introducing another source of error. As a result, the gradient $\grad \widetilde{F}(\vb{z})$ is evaluated at inexact state and adjoint variables, leading to an inexact gradient. These observations indicate that inexact gradient-based methods naturally arise as an appropriate and effective framework for differential-equation-constrained optimization (DECO) problems.

\section{Case Studies}\label{sec:std_cas}

    This section presents two study cases, each formulated as an inverse problem consistent with the NAND formulation \cref{eq:opt_p_2}. In the first case, which is detailed in \cref{subsec:cs_1}, the governing equation is a second-order ODE. The second case, described in \cref{subsec:cs_2}, uses the Laplace equation as its governing equation. These particular governing equations were chosen because they lead to state and adjoint equations with a sparse tridiagonal structure. Such systems can be efficiently solved by iterative methods terminated when the residuals meet a specified tolerance, giving explicit control of the state and adjoint residuals and, consequently, adjoint-based gradients of known accuracy, as discussed in \cref{subsec:adj_error}. The controls, which are either boundary data or coefficients, are adjusted to fit measurements at selected locations. In both study cases, the associated state equation is linear in the state variable and can be expressed in the following affine form:
    \begin{equation}
        \label{eq:ge_sys}
        \vb{R}(\vb{z}, \vb{y}) = \vb{A}(\vb{z})\,\vb{y} - \vb{b}(\vb{z}),
    \end{equation}
    where $\vb{A}:\RR^{m}\to\RR^{n\times n}$ is a matrix-valued function determined by the governing equation and the finite-difference stencil used for discretization, and $\vb{b}:\RR^{m}\to\RR^{n}$ is a vector-valued function that incorporates boundary conditions and may also include source terms.
        
    \subsection{Case Study 1: Inverse Problem with Parametrized Coefficients and Dirichlet Boundary Conditions}
        \label{subsec:cs_1}

        In the first case study, the governing equation consists of a boundary value problem (BVP) defined by a second-order ODE with Dirichlet boundary conditions and a polynomial source term, as follows:
        \begin{equation}
            \label{eq:bvp_edo}
            \begin{split}
                &z_0\dv[2]{u}{x} + z_1\dv{u}{x} + z_2u = \underbrace{z_3 + z_4x + z_5x^2}_{p(x)}, \qquad x \in (0,1), \\
                &u(0) = z_6, \qquad u(1) = z_7, 
            \end{split}
        \end{equation}
        where $\vb{z} = \begin{bmatrix} z_0 & z_1 & \cdots & z_7 \end{bmatrix} \in \mathbb{R}^8$ denotes the vector of control variables, with $z_0 \neq 0$. This ODE commonly appears in the analysis of second-order oscillatory systems, including mechanical (mass-spring) and electrical (RLC) systems.
        
        Discretizing \cref{eq:bvp_edo} with a Cartesian mesh of $M > 0$ interior points, mesh spacing $\Delta x = 1 / (M+1)$, and second-order central difference operators, leads to a state equation in which $\vb{A}: \RR^8\to\RR^{(M+2) \times (M+2)}$ and $\vb{b}: \RR^8\to\RR^{(M + 2)}$ are defined as follows:
         \[
            \vb A(\vb z)=
                \begin{bmatrix}
                    1 & \vb{0}_{\RR^{M}}^{\!\top} & 0\\[4pt]
                    \overline{a}\,\vb e_{1} &
                    \operatorname{tridiag}\!\big(\overline{a},\,\overline{b},\,\overline{c}\big) &
                    \overline{c}\,\vb e_{M}\\[4pt]
                    0 & \vb{0}_{\RR^{M}}^{\!\top} & 1
                \end{bmatrix},
                \quad \text{and} \quad
            \vb{b}(\vb{z}) =
                \begin{bmatrix}
                    z_6 \\
                    p(x_1)  \\
                    p(x_2) \\
                    \vdots \\
                    p(x_{M-1}) \\
                    p(x_M) \\
                    z_7
                \end{bmatrix},
        \]
        respectively, with 
        \begin{equation*}
            \overline{a} = \left(\dfrac{z_0}{\Delta x^2} - \dfrac{z_1}{2\Delta x}\right), \quad \overline{b} = \left(-\dfrac{2z_0}{\Delta x^2} + z_2\right), \quad \text{and} \quad \overline{c} = \left(\dfrac{z_0}{\Delta x^2} + \dfrac{z_1}{2\Delta x}\right).
        \end{equation*}
       Here, $\vb{e}_1$ and $\vb{e}_M$ denote the first and last canonical basis vectors of $\RR^M$. If $\vb{A}(\vb{z})$ is strictly diagonally dominant, the Levy–Desplanques theorem ensures the matrix is non-singular, thus guaranteeing a unique solution to the state equation. For this problem, given $z_1 \neq 0$ and $M > 0$, it is sufficient to choose $z_0 \geq \frac{|z_1|}{2(M + 1)}$ and any $z_2 < 0$ to obtain strict diagonal dominance. The other coefficients do not affect this property. Thus, $\vb{u}: \RR^8 \to \RR^{M+2}$ is defined as $\vb{u}(\vb{z}) = [\vb{A}(\vb{z})]^{-1}\vb{b}(\vb{z})$. 

        Regarding the objective function and the reference values, we proceed as follows. 
        First, a reference control vector $\vb{z}_{\text{ref}} \in \mathbb{R}^8$ is defined and used to generate the corresponding reference state values. 
        Specifically, the analytical solution of \cref{eq:bvp_edo}, denoted by $u(x;\vb{z})$, is evaluated at $\vb{z}_{\text{ref}}$, and the reference values are set as
        \[
            u_k^* = u(x_k;\vb{z}_{\text{ref}}), \quad \forall\, k \in S,
        \]
        where $S \subset \{0, 1, \dots, M+1\}$ is a subset of mesh indices obtained by uniformly sampling $n_P \in (0, M + 1)$ points. 
        The objective function $\widetilde{F}: \mathbb{R}^8 \to \mathbb{R}$ is then defined as the sum of squared residuals between the computed solution and the reference values, both evaluated at the indices in $S$:
        \begin{align}
            \widetilde{F}(\vb{z})
            &= \sum_{k \in S} \left[ u_k(\vb{z}) - u_k^* \right]^2 
            + \alpha \left[u_0(\vb{z}) - 1\right]^2, 
            \label{eq:ode_fo}
        \end{align}
        where $u_k:\RR^8\to \RR$ denotes the $k$-th component of $\vb{u}(\vb{z})$, and
        $F:\mathbb{R}^{M+10} \to \mathbb{R}$ is defined as 
        $F(\vb{z}, \vb{y}) = \sum_{k \in S} [y_k - u_k^*]^2$. 
        The last term in \cref{eq:ode_fo}, weighted by the parameter $\alpha > 0$, penalizes deviations of $u_0(\vb{z})$, the leading coefficient, from unity, ensuring consistency with the normalized form of the second-order ODE.

        %For completeness, \cref{fig:cs1_recon} illustrates the solution of an instance of this inverse problem in a representative grid ($M=64$). The optimized curve aligns precisely with the analytical solution at the sampled reference points.
        For completeness, \cref{fig:cs1_recon} illustrates the solution of a specific instance of this inverse problem on a representative grid ($M=64$). The optimized curve aligns precisely with the analytical solution at the sampled reference points.
        \begin{figure}[H]
            \centering
            \includegraphics[scale=0.7]{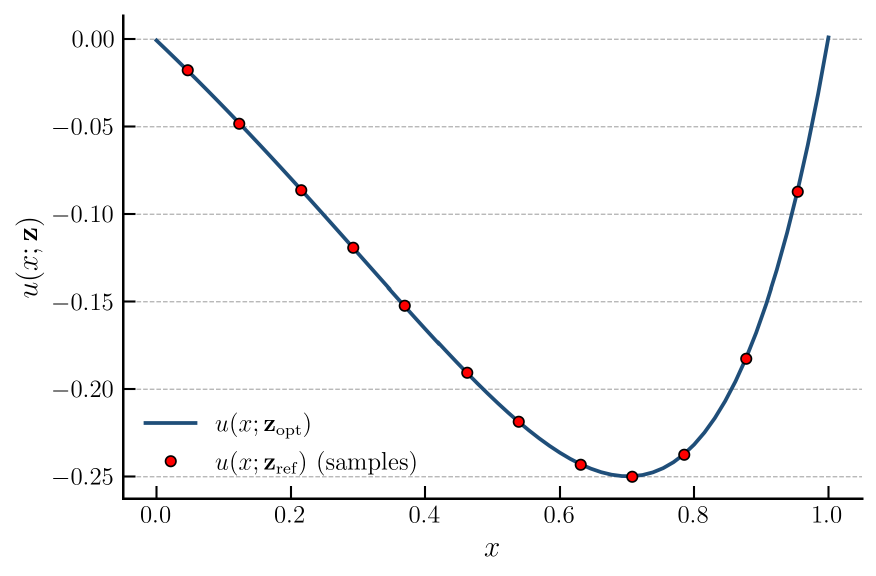}
            \caption{Optimized solution curve with reference points.}
            \label{fig:cs1_recon}
        \end{figure}
        
        \subsection{Case Study 2: Inverse Problem with Parametrized Boundary Conditions}
            \label{subsec:cs_2}
            
            In the second case study, the governing equation is a BVP defined by Laplace's equation on a unit square, subject to Dirichlet boundary conditions, as follows:
            \begin{equation}  
                \label{eq:bvp_lap}
                \begin{alignedat}{2}
                    \pdv[2]{u}{x} + \pdv[2]{u}{y} &= 0, \quad &&(x, y) \in \Omega, \\
                    u(x, y) - u_0(x,y; \vb{z}) &= 0, \quad &&(x, y) \in \partial \Omega,
                \end{alignedat}
            \end{equation}
            where $\vb{z} = \begin{bmatrix}z_0 & z_1& z_2& \cdots& z_\ell\end{bmatrix}^T \in \mathbb{R}^{\ell + 1}$, with $\ell >1$, and $\Omega = \{(x,y) \in \mathbb{R}^2 \mid x, y \in (0, 1)\}$ is the unit square domain with boundary $\partial\Omega = \partial\Omega_1 \cup \partial\Omega_2 \cup \partial\Omega_3 \cup \partial\Omega_4$. These boundary subsets are defined by
            \begin{alignat*}{3}
                \partial\Omega_1 &= \{(x, 0) \in \RR^2 \mid x \in (0, 1)\}, \quad 
                &&\partial\Omega_2 &&= \{(x, 1) \in \RR^2 \mid x \in (0, 1)\},\\
                \partial\Omega_3 &= \{(0, y) \in \RR^2 \mid y \in [0, 1]\},
                \quad 
                &&\partial\Omega_4 &&= \{(1, y) \in \RR^2 \mid y \in [0, 1]\}.
            \end{alignat*}
            The function $u_0: \partial \Omega \to \mathbb{R}$ defines the shape of the Dirichlet boundary conditions. It is expressed as
            \begin{equation*}
                u_0(x, y; \vb{z}) =
                \begin{cases}
                    0, &(x,y) \in \partial\Omega_1 \cup \partial\Omega_2 \cup \partial\Omega_3, \\
                    f(y;\vb{z}), & (x, y) \in \partial\Omega_4,
                \end{cases}
            \end{equation*}
            where $f:[0,1]\to \mathbb{R}$ is a polynomial function given by
            \begin{equation*}
            \label{po_eq:f}
            f(y;\vb{z}) =  z_0 + z_1y + z_2y^2 + \dots + z_\ell y^\ell.
            \end{equation*}
            It is important to note that \cref{eq:bvp_lap} arises when modeling steady-state temperature distributions in thermodynamic systems.
            
            The BVP described in \cref{eq:bvp_lap} can be discretized using a uniform two-dimensional Cartesian mesh with $M > 0$ interior points in each direction and mesh spacing $h = 1 / (M + 1)$. Since Laplace's equation is elliptic, central difference operators are suitable for approximating the derivatives. By organizing the algebraic equations resulting from the discretization in a row-wise fashion, we obtain a state equation for which $\vb{A} : \RR^{\ell + 1} \to \RR^{(M+2)^2 \times (M+2)^2}$ and $\vb{b} : \RR^{\ell + 1} \to \RR^{(M+2)^2}$ are defined as follows:
            \begin{equation*}
                \vb{A}(\vb{z}) =
                 \left[
                \begin{array}{c|cccccc|c}
                    \vb{I} &&&&&&& \\
                    \hline
                    \vb{B} & \vb{T} & \vb{B} &&&&&\\
                    & \vb{B} & \vb{T} & \vb{B}&&&&\\
                    & & \vb{B} & \vb{T} & \vb{B} & &&\\
                    &&&\ddots &\ddots& \ddots&&\\
                    &&&&\vb{B}&\vb{T}&\vb{B}&\\
                    &&&&&\vb{B}&\vb{T}&\vb{B}\\
                    \hline
                    &&&&&&&\vb{I}\\
                \end{array}
                \right], \quad \text{ and } \quad  
                \vb{b}(\vb{z}) = 
                \begin{bmatrix}
                    \vb{0}_{(M+1)(M+2)\times 1} \\
                    \cmidrule(lr){1-1}
                    f(0;\vb{z})\\
                    f(h;\vb{z}) \\
                    \vdots \\
                    f((M+1)h;\vb{z})
                \end{bmatrix},
            \end{equation*}
            respectively, where
            \[
                \vb T =
                \begin{bmatrix}
                    1 & \vb{0}_{\RR^{M}}^{\!\top} & 0\\[4pt]
                    \vb e_{1} & \operatorname{tridiag}(1,-4,1) & \vb e_{M}\\[4pt]
                    0 & \vb{0}_{\RR^{M}}^{\!\top} & 1
                \end{bmatrix},
                \quad \text{and} \quad
                \vb B =
                \begin{bmatrix}
                    0 & \vb{0}_{\RR^{M}}^{\!\top} & 0\\[4pt]
                    \vb{0}_{\RR^{M}} & \vb I_{M\times M} & \vb{0}_{\RR^{M}}\\[4pt]
                    0 & \vb{0}_{\RR^{M}}^{\!\top} & 0
                \end{bmatrix}
            \]
            are in $\RR^{(M+2)\times(M+2)}$. Here, $\vb{I}_{M\times M}$ is the identity matrix of order $M$. It should be noted that if the boundary conditions are addressed separately, $\vb{A}$ can be simplified to a symmetric, negative definite block tridiagonal matrix. This property guarantees that $\vb{A}$ is non-singular.  As a result, we can define $\vb{u}: \RR^{\ell +1} \to \RR^{(M+2)^2}$ by $\vb{u}(\vb{z}) = \left[\vb{A}(\vb{z})\right]^{-1}\vb{b}(\vb{z})$ as in the first case study.

            Turning to the construction of the objective function and the reference values for this problem, we proceed as follows. 
            Initially, we define $\Omega_d \subset \Omega$ as the set of desired point coordinates used to evaluate each reference value. 
            To construct this set, we employ the Latin hypercube sampling technique, which enables the generation of a uniformly distributed set of points across the domain \cite{mckay_comparison_1979}. 
            Next, these points are mapped to the corresponding mesh points to build $\Omega_d$, and we outline the set of its mesh indices as
            \[
                \mathcal{I}(\Omega_d) = \{(j, k) \in \mathcal{I}\times\mathcal{I} \mid (jh, kh) \in \Omega_d\},
                \quad \text{where} \quad \mathcal{I} = \{0,1,\ldots,M+1\}.
            \]
            For the evaluation of the reference values, we consider the analytical solution of \cref{eq:bvp_lap} at a reference vector of control variables $\vb{z}_{\text{ref}}\in\RR^{\ell + 1}$, denoted by $u(x,y;\vb{z}_{\text{ref}})$, obtained by solving the problem with $ f(y;\vb{z}_{\text{ref}})$.
            % \[
            %     f(y;\vb{z}_{\text{ref}}) = -y^2 + y,
            %     \quad \text{where} \quad
            %     % \vb{z}_{\text{ref}} =
            %     % \begin{bmatrix}
            %     %     0 & 1 & -1
            %     % \end{bmatrix}^{\!\top}.
            % \]
            This analytical solution is then used to compute the reference state values $u_d^{j,k}$ at the points in $\Omega_d$, so that
            \[
                u_d^{j,k} = u(jh,kh;\vb{z}_{\text{ref}}), \quad \forall\,(j,k)\in \mathcal{I}(\Omega_d).
            \]
            Then, we finally define the objective function $\widetilde{F}:\RR^{\ell + 1}\to\RR$ as
            \begin{align*}
                \widetilde{F}(\vb{z})
                &= \sum\limits_{(j, k) \in \mathcal{I}(\Omega_d)} \left[u_{j, k}(\vb{z}) - u_{d}^{j, k}\right]^2,
            \end{align*}
            where $u_{j,k}: \RR^{\ell + 1} \to \RR$ denotes the $(j,k)$-th component of $\vb{u}(\vb{z})$, and $ F:\RR^{\ell + 1 + (M+2)^2}\to\RR$ is defined as $F(\vb{z}, \vb{y}) = \sum\limits_{(j, k) \in \mathcal{I}(\Omega_d)} \left[y_{j, k} - u_{d}^{j, k}\right]^2$.

            For completeness, \cref{fig:cs2_recon} illustrates the solution of a specific instance of this inverse problem in a representative grid ($M=64$). The optimized surface aligns precisely with the analytical solution at the sampled reference points.
            \begin{figure}[H]
                \centering
                \includegraphics[scale=0.16]{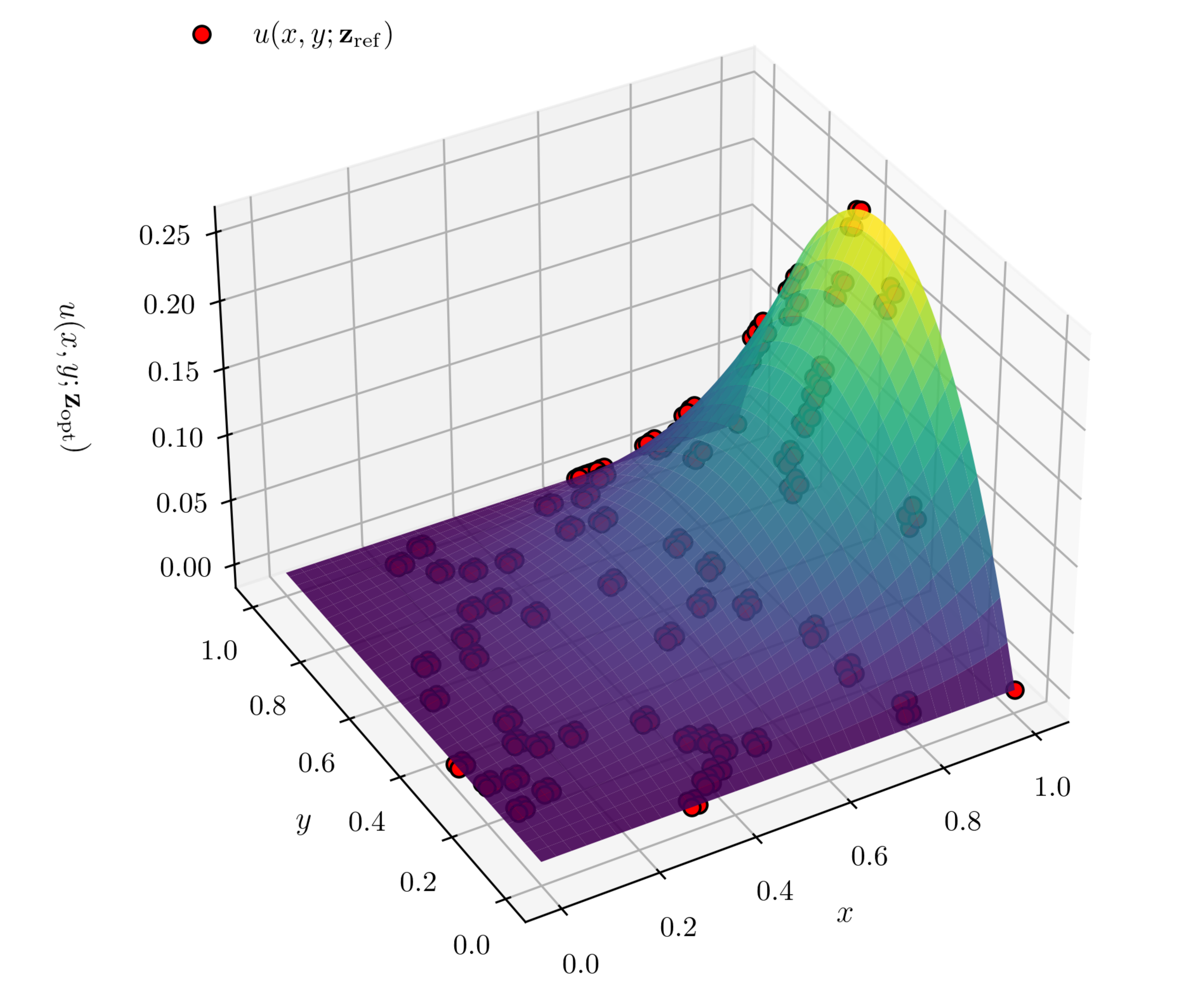}
                \caption{Optimized solution surface with reference points.}
                \label{fig:cs2_recon}
            \end{figure}

\section{Convergence Theory}\label{sec:conv_the}

        This section begins with preliminaries that establish the necessary background for the analysis, which will be restricted to the case of smooth functions. We then introduce the proposed algorithm and outline its key properties. Following this, we determine conditions under which we carry out a convergence analysis.

        \subsection{Mathematical Preliminaries}

            It is well known that smooth functions can be upper-bounded by a quadratic polynomial \cite{beck_first-order_2017}. In a series of works, \citet{devolder_stochastic_2011, devolder_exactness_2013, devolder_first-order_2014} demonstrated a similar result for inexact gradients, as shown in the following lemma. The proof provided here follows Claim 4.1 in \cite{vasin_accelerated_2023}.
            \begin{lemma}
                \label{lem:iqub}
                 Let $\widetilde{F}: \mathbb{R}^m \to \mathbb{R}$ be an $L$-smooth function, and $\vb{z} \in \mathbb{R}^m$ a specified vector of control variables. Suppose that the inexact gradient of $\widetilde{F}$, denoted by $\overline{\grad} \widetilde{F} : \mathbb{R}^m \to \mathbb{R}^m$, satisfies
                %Let $\widetilde{F}: \mathbb{R}^m \to \mathbb{R}$ be an $L$-smooth function, and let $\overline{\grad} \widetilde{F} : \mathbb{R}^m \to \mathbb{R}^m$ be an inexact gradient of $\widetilde{F}$ that satisfies
                \begin{equation}
                    \label{eq:e_dz}
                    \norm{\overline{\grad} \widetilde{F}(\vb{z}) - \grad \widetilde{F}(\vb{z})}_2 \leq \delta_{\vb{z}}, \quad \forall \vb{z} \in \mathbb{R}^m,
                \end{equation}
                 where $\grad \widetilde{F} : \mathbb{R}^m \to \mathbb{R}^m$ denotes the exact gradient of $\widetilde{F}$, and \( \delta_{\vb{z}} \in \mathbb{R}_{\geq 0} \) indicates the error tolerance at $\vb{z}$. Then,
                \begin{equation*}
                    \widetilde{F}(\vb{x}) \leq \widetilde{F}(\vb{z}) + \overline{\grad} \widetilde{F}(\vb{z})^T(\vb{x}-\vb{z}) + \frac{L'}{2}\norm{\vb{x} - \vb{z}}_2^2 + \delta_{\vb{z}}', \quad \forall \vb{x}, \vb{z} \in \mathbb{R}^m,
                \end{equation*}
                where \( L' = L + 1 \) and \( \delta_{\vb{z}}' = \delta_{\vb{z}}^2 / 2\). %\textcolor{blue}{Este teorema precisa ser pra todo z mesmo ou daria pra fazer z fixo no início do enunciado?}
        \end{lemma}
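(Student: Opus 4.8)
The plan is to start from the classical descent lemma for $L$-smooth functions and then absorb the gradient error using Cauchy--Schwarz together with Young's inequality. Concretely, since $\widetilde{F}$ is $L$-smooth, the standard quadratic upper bound (see \cite{beck_first-order_2017}) gives, for all $\vb{x},\vb{z}\in\RR^m$,
\begin{equation*}
    \widetilde{F}(\vb{x}) \leq \widetilde{F}(\vb{z}) + \grad \widetilde{F}(\vb{z})^T(\vb{x}-\vb{z}) + \frac{L}{2}\norm{\vb{x} - \vb{z}}_2^2.
\end{equation*}

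Next I would replace the exact gradient by the inexact one by inserting and grouping the error term, writing $\grad \widetilde{F}(\vb{z})^T(\vb{x}-\vb{z}) = \overline{\grad} \widetilde{F}(\vb{z})^T(\vb{x}-\vb{z}) + \big(\grad \widetilde{F}(\vb{z}) - \overline{\grad}\widetilde{F}(\vb{z})\big)^T(\vb{x}-\vb{z})$. The cross term is bounded via Cauchy--Schwarz and hypothesis \eqref{eq:e_dz} by $\delta_{\vb{z}}\norm{\vb{x}-\vb{z}}_2$. Then Young's inequality in the form $ab \leq \tfrac12 a^2 + \tfrac12 b^2$ with $a=\delta_{\vb{z}}$ and $b=\norm{\vb{x}-\vb{z}}_2$ yields $\delta_{\vb{z}}\norm{\vb{x}-\vb{z}}_2 \leq \tfrac12\delta_{\vb{z}}^2 + \tfrac12\norm{\vb{x}-\vb{z}}_2^2$.

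Substituting this bound back and collecting the quadratic terms gives $\widetilde{F}(\vb{x}) \leq \widetilde{F}(\vb{z}) + \overline{\grad}\widetilde{F}(\vb{z})^T(\vb{x}-\vb{z}) + \tfrac{L+1}{2}\norm{\vb{x}-\vb{z}}_2^2 + \tfrac{\delta_{\vb{z}}^2}{2}$, which is the claim with $L' = L+1$ and $\delta_{\vb{z}}' = \delta_{\vb{z}}^2/2$. There is no real obstacle here; the only point requiring a (minor) choice is the splitting weight in Young's inequality. Using weight $1$ for the quadratic part is what produces the clean constant $L'=L+1$; a general weight $\varepsilon>0$ would instead give $L'=L+\varepsilon$ and $\delta_{\vb{z}}' = \delta_{\vb{z}}^2/(2\varepsilon)$, so I would simply note that the stated constants correspond to the canonical choice $\varepsilon=1$.
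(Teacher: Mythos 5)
Your proposal is correct and follows essentially the same route as the paper: start from the $L$-smoothness quadratic upper bound, split the gradient term into the inexact gradient plus the error, and bound the cross term by $\tfrac12\delta_{\vb{z}}^2+\tfrac12\norm{\vb{x}-\vb{z}}_2^2$. The paper obtains this last bound by expanding $\norm{\vb{w}-\vb{v}}_2^2\ge 0$ rather than invoking Cauchy--Schwarz followed by Young's inequality, but these are the same estimate, so the arguments coincide.
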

        \begin{proof}
            Since $\widetilde{F}$ is $L$-smooth, it follows that
            \begin{equation}
                \label{eq:iqub_1}
                \widetilde{F}(\vb{x}) \leq \widetilde{F}(\vb{z}) + \grad \widetilde{F}(\vb{z})^T(\vb{x}-\vb{z}) + \dfrac{L}{2}\norm{\vb{x}-\vb{z}}_2^2, \quad \forall \vb{x} \in \mathbb{R}^m. 
            \end{equation}
            The gradient term in \cref{eq:iqub_1} can be rewritten as 
            \begin{equation}
                \label{eq:iqub_2}
                \grad \widetilde{F}(\vb{z})^T(\vb{x}-\vb{z}) = \overline{\grad}\widetilde{F}(\vb{z})^T(\vb{x}-\vb{z}) + \left[\grad \widetilde{F}(\vb{z}) - \overline{\grad}\widetilde{F}(\vb{z})\right]^T(\vb{x}-\vb{z}).
            \end{equation}
            Now, define $\vb{w}= \grad \widetilde{F}(\vb{z}) - \overline{\grad}\widetilde{F}(\vb{z})$ and $\vb{v}= \vb{x}-\vb{z}$. 
            Since $\norm{\vb{w}-\vb{v}}_{2}^2 \geq 0$ by the non-negativity property of norms, we can deduce that
            \begin{equation}
                \label{eq:iqub_3}
                \left[\grad \widetilde{F}(\vb{z}) - \overline{\grad}\widetilde{F}(\vb{z})\right]^T(\vb{x}-\vb{z}) \leq \dfrac{1}{2}\left[\norm{\grad \widetilde{F}(\vb{z}) - \overline{\grad}\widetilde{F}(\vb{z})}_2^2 + \norm{\vb{x}-\vb{z}}_2^2\right].
            \end{equation}
            Substituting \cref{eq:iqub_3} in \cref{eq:iqub_2} yields the following upper bound for the gradient term:
            \begin{equation}
                \label{eq:iqub_4}
                \grad \widetilde{F}(\vb{z})^T(\vb{x}-\vb{z}) \leq \overline{\grad}\widetilde{F}(\vb{z})^T(\vb{x}-\vb{z}) + \dfrac{1}{2}\norm{\vb{x}-\vb{z}}_2^2 + \dfrac{1}{2}\norm{\grad \widetilde{F}(\vb{z}) - \overline{\grad}\widetilde{F}(\vb{z})}_2^2.
            \end{equation}
            Applying \cref{eq:iqub_4} to \cref{eq:iqub_1}, and subsequently utilizing \cref{eq:e_dz} within the result, yields the desired inequality. % as follows:
            % \begin{align*}
            %     \widetilde{F}(\vb{x}) &\leq \widetilde{F}(\vb{z}) + \overline{\grad}\widetilde{F}(\vb{z})^T(\vb{x}-\vb{z}) + \left(\dfrac{L+1}{2}\right)\norm{\vb{x}-\vb{z}}_2^2 + \dfrac{1}{2}\norm{\grad \widetilde{F}(\vb{z}) - \overline{\grad}\widetilde{F}(\vb{z})}_2^2\\
            %     &\leq \widetilde{F}(\vb{z}) + \overline{\grad}\widetilde{F}(\vb{z})^T(\vb{x}-\vb{z}) + \dfrac{L'}{2}\norm{\vb{x}-\vb{z}}_2^2 + \dfrac{1}{2}\delta_{\vb{z}}^2\\
            %     &= \widetilde{F}(\vb{z}) + \overline{\grad}\widetilde{F}(\vb{z})^T(\vb{x}-\vb{z}) + \dfrac{L'}{2}\norm{\vb{x}-\vb{z}}_2^2 + \delta_{\vb{z}}'.
            % \end{align*}
        \end{proof}
        %The \cref{lem:iqub} will provide a foundation for the convergence analysis of our proposed method for smooth functions.
            
        \subsection{Inexact General Descent Method}
            This section introduces an \emph{Inexact General Descent Method} (IGDM) to address smooth unconstrained problems. %, with DECO problems under the NAND formulation in \cref{eq:opt_p_2} as the motivating context. 
             The method is characterized by the following update rule:
            \begin{equation}
                \label{eq:z_si}
                \vb{z}_{k+1} = \vb{z}_k + t_k\vb{s}_k, \quad \forall k \in \NN_0,
            \end{equation}
            where $(t_k)_{k\in\NN_0}$ represents a positive step size sequence, and \(\vb{s}_k \in \mathbb{R}^m\) indicates an inexact general direction.  The directional conditions we use here originate from classical descent methods such as in \cite{bertsekas_gradient_2000}, but we replace the exact gradients with their inexact counterparts. We assume that there exist constants $c_1' > 0$ and $c_2' > 0$ such that $\vb{s}_k$ satisfies the following conditions:
            \begin{subequations}
                \label{eq:gdi}
                \begin{align}
                    c_1'\norm{\overline{\grad} \widetilde{F}(\vb{z}_k)}_2^2 &\leq - \overline{\grad} \widetilde{F}(\vb{z}_k)^T\vb{s}_k, \quad \forall k \in \NN_0, \label{eq:gdicond_a}\\
                    \intertext{and}
                    \norm{\vb{s}_k}_2 &\leq c_2'\norm{\overline{\grad} \widetilde{F}(\vb{z}_k)}_2, \quad \forall k \in \NN_0. \label{eq:gdicond_b}
                \end{align}
            \end{subequations}
         % \begin{comment}
         %      Furthermore, it is assumed that the inexact gradient error $\vb{e}_k = \overline{\grad} \widetilde{F}(\vb{z}_k) - \grad \widetilde{F}(\vb{z}_k)$ is bounded, specifically, 
         %   \begin{equation} 
         %        \label{eq_ek}
         %        \norm{\vb{e}_k}_2 \leq \delta_k, \quad \forall k \in \mathbb{N}_0,
         %    \end{equation}
         %    where \((\delta_k)_{k \in \mathbb{N}_0}\) is a non-negative error tolerance sequence.
         % \end{comment}   
            
            Having introduced the update rule,  the search direction, and the error condition for the inexact gradient, we can consolidate the procedure of the IGDM in \cref{alg:igd_wr}. For this, we consider that the inexact gradient error $\vb{e}_k = \overline{\grad} \widetilde{F}(\vb{z}_k) - \grad \widetilde{F}(\vb{z}_k)$ is such that
            \begin{equation} 
                \label{eq_ek}
                \norm{\vb{e}_k}_2 \leq \delta_k, \quad \forall k \in \mathbb{N}_0,
            \end{equation}
            where \((\delta_k)_{k \in \mathbb{N}_0}\) is a non-negative error tolerance sequence.
            
            \begin{algorithm}[H]
\footnotesize
\caption{\textsc{Inexact General Descent Method}}
\label{alg:igd_wr}
\LinesNotNumbered
\DontPrintSemicolon

\KwData{An initial point $\vb{z}_0 \in \mathbb{R}^m$}
\KwResult{A sequence $\left(\vb{z}_k\right)_{k \in \mathbb{N}_0} \subset \mathbb{R}^m$ whose limit points, if they exist, are stationary}

\BlankLine
\textbf{Step 1.} Set $k \gets 0$.\;
\textbf{Step 2.} Choose $\delta_k \geq 0$.\;
\textbf{Step 3.} Compute $\overline{\grad} \widetilde{F} (\vb{z}_k) \in \mathbb{R}^m$ such that
\[
    \norm{\overline{\grad} \widetilde{F}(\vb{z}_k) - \grad \widetilde{F}(\vb{z}_k)}_2 \leq \delta_k.
\]\;
\textbf{Step 4.} Compute $\vb{s}_k \in \mathbb{R}^m$ so that \cref{eq:gdi} holds.\;
\textbf{Step 5.} Choose $t_k > 0$ by a specific rule.\;
\textbf{Step 6.} Set $\vb{z}_{k + 1} \gets \vb{z}_k + t_k\vb{s}_k$.\;
\textbf{Step 7.} Set $k \gets k + 1$ and \textbf{go to Step 2}.\;
\end{algorithm}

            % \textcolor{blue}{Eu tiraria os critérios de parada, já que o teorema se refere à sequência infinita gerada melo algoritmo. Isto tanto com relação a $\epsilon$ quanto a $N_{max}$}
            
            % \begin{algorithm}[H]
            %     \footnotesize
            %     \caption{\textsc{Inexact General Descent Method}}
            %     \label{alg:igd_wr}
            %     \KwData{An initial guess $\vb{z}_0 \in \mathbb{R}^m$, a tolerance $\epsilon > 0$, and a maximum number of iterations $N_{\max} > 0$.}
            %     \KwResult{An inexact $\epsilon$-critical point, i.e., a point $\vb{z}^\star \in \mathbb{R}^m$ such that $\norm{\overline{\grad} \widetilde{F}(\vb{z}^\star)}_2 \leq \epsilon$.}
            %     \SetKwRepeat{Do}{do}{while}
            %     \For{$k = 0$ \KwTo $N_{\max}$} {
            %         Choose $\delta_k \geq 0$.\\
            %         Compute $\overline{\grad} \widetilde{F} (\vb{z}_k) \in \mathbb{R}^m$ such that \\
            %         \begin{equation*}
            %             \norm{\overline{\grad} \widetilde{F}(\vb{z}_k) - \grad \widetilde{F}(\vb{z}_k)}_2 \leq \delta_k.
            %         \end{equation*}\\
            %         \If{$\norm{\overline{\grad} \widetilde{F}(\vb{z}_k)}_2 \leq \epsilon$} {
            %             \Break
            %         }
            %         Compute $\vb{s}_k \in \mathbb{R}^m$ so that \cref{eq:gdi} hold. \\
            %         Choose $t_k > 0$ by a specific rule.\\
            %         Set $\vb{z}_{k + 1} \gets \vb{z}_k + t_k\vb{s}_k$. 
            %     }
            % \end{algorithm}
            \noindent As will be shown in the next subsection, the method converges provided the sequence $(\delta_k)_{k\in\NN_0}$ satisfies additional conditions that depend on the step–size rule.

            \subsection{Convergence Analysis}
                \label{subsec:conv_anal}
                
                We now turn to the convergence analysis of the IGDM. We begin by presenting several auxiliary results that form the foundation of the proofs. By combining the IGDM update rule with its associated directional conditions, we introduce the following assumption, which will be used throughout the subsequent results:
                \begin{assumption}
                    \label{a.1}
                     Let $\widetilde{F}: \mathbb{R}^m \to \mathbb{R}$ be an $L$-smooth function. Given an initial point $\vb{z}_0 \in \mathbb{R}^m$, consider the sequence $(\vb{z}_k)_{k\in\mathbb{N}}$ generated by \eqref{eq:z_si} employing a positive sequence of step sizes \((t_k)_{k \in \mathbb{N}}\), and a general directions \((\vb{s}_k)_{k\in\mathbb{N}}\) that fulfills \eqref{eq:gdicond_a} and \eqref{eq:gdicond_b} for a pair of constants $c_1' > 0$ and $c_2' > 0$.
                \end{assumption}
                
                Following the classical gradient descent method, our analysis is built upon a key relation between the change in the objective function value across successive iterations and the (inexact) gradient at the current point. This relation, which will be instrumental in establishing the convergence theorems for the step-size strategies considered later, is formalized in the following lemma.
                \begin{lemma}
                    \label{lem:idl}
                     Suppose that \cref{a.1} is satisfied. If \cref{eq_ek} holds, then
                    \begin{equation}
                        \label{eq:idl}
                        \widetilde{F}(\vb{z}_{k+1}) \leq
                        \widetilde{F}(\vb{z}_k) -t_k\left(c_1'-(c_2')^2\dfrac{L't_k}{2}\right)\norm{\overline{\grad} \widetilde{F}(\vb{z}_k)}_2^2 + \delta_k', \quad \forall k \in \mathbb{N}_0,
                    \end{equation}
                    where $L' = L + 1$ and $\delta'_k = \delta_k^2/2$.
                \end{lemma}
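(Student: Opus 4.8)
The idea is to feed the IGDM update into the inexact quadratic upper bound of \cref{lem:iqub} and then absorb the resulting linear and quadratic terms using the two directional conditions \eqref{eq:gdicond_a} and \eqref{eq:gdicond_b}. Concretely, I would fix $k \in \NN_0$ and apply \cref{lem:iqub} with $\vb{z} = \vb{z}_k$ and $\vb{x} = \vb{z}_{k+1} = \vb{z}_k + t_k \vb{s}_k$, so that $\vb{x} - \vb{z} = t_k \vb{s}_k$. Since \eqref{eq_ek} asserts $\norm{\vb{e}_k}_2 = \norm{\overline{\grad}\widetilde{F}(\vb{z}_k) - \grad\widetilde{F}(\vb{z}_k)}_2 \le \delta_k$, the tolerance $\delta_{\vb{z}}$ of \cref{lem:iqub} evaluated at $\vb{z}_k$ may be taken to be $\delta_k$, and hence $\delta_{\vb{z}}' = \delta_k^2/2 = \delta_k'$. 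This yields
\[
    \widetilde{F}(\vb{z}_{k+1}) \le \widetilde{F}(\vb{z}_k) + t_k\,\overline{\grad}\widetilde{F}(\vb{z}_k)^T \vb{s}_k + \frac{L'}{2}\,t_k^2 \norm{\vb{s}_k}_2^2 + \delta_k',
\]
with $L' = L+1$.

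Next I would bound the two middle terms. For the linear term, multiplying \eqref{eq:gdicond_a} by $t_k > 0$ (positivity of the step size is part of \cref{a.1}) gives $t_k\,\overline{\grad}\widetilde{F}(\vb{z}_k)^T \vb{s}_k \le -\,c_1' t_k \norm{\overline{\grad}\widetilde{F}(\vb{z}_k)}_2^2$. For the quadratic term, squaring \eqref{eq:gdicond_b} gives $\norm{\vb{s}_k}_2^2 \le (c_2')^2 \norm{\overline{\grad}\widetilde{F}(\vb{z}_k)}_2^2$, hence $\frac{L'}{2} t_k^2 \norm{\vb{s}_k}_2^2 \le \frac{L'}{2}(c_2')^2 t_k^2 \norm{\overline{\grad}\widetilde{F}(\vb{z}_k)}_2^2$. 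Substituting both estimates into the displayed inequality and factoring $t_k \norm{\overline{\grad}\widetilde{F}(\vb{z}_k)}_2^2$ out of the two gradient-norm terms produces exactly \eqref{eq:idl}. Since $k$ was arbitrary, the bound holds for all $k \in \NN_0$.

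I do not anticipate a genuine obstacle here: the argument is a direct chaining of \cref{lem:iqub} with the descent conditions. The only points requiring a little care are bookkeeping ones — identifying the generic tolerance $\delta_{\vb z}$ of \cref{lem:iqub} with the sequence value $\delta_k$ from \eqref{eq_ek} (so that $\delta_{\vb z}' = \delta_k'$), and using $t_k > 0$ when multiplying the inequality \eqref{eq:gdicond_a} so the direction of the inequality is preserved. No additional hypotheses on $(t_k)$ or $(\delta_k)$ are needed at this stage; those enter only in the subsequent convergence theorems.
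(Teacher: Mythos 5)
Your argument is correct and matches the paper's proof essentially line for line: both apply \cref{lem:iqub} with $\vb{x}=\vb{z}_{k+1}$, $\vb{z}=\vb{z}_k$, $\delta_{\vb{z}_k}=\delta_k$, substitute $\vb{z}_{k+1}-\vb{z}_k=t_k\vb{s}_k$, and then invoke \eqref{eq:gdicond_a} and \eqref{eq:gdicond_b} to obtain \eqref{eq:idl}. The bookkeeping points you flag (identifying $\delta_{\vb{z}}'$ with $\delta_k'$ and using $t_k>0$) are handled implicitly in the paper and raise no issue.
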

                \begin{proof}
                    Since $\widetilde{F}$ is an $L$-smooth function and \cref{eq_ek} holds, we can apply \cref{lem:iqub} with $\vb{x} = \vb{z}_{k+1}$, $\vb{z} = \vb{z}_k$, and $\delta_{\vb{z}_k}' =\delta_k'$. Proceeding in this way, we obtain
                    \begin{align}
                        \widetilde{F}(\vb{z}_{k+1}) &\leq
                        \widetilde{F}(\vb{z}_k) + \overline{\grad} \widetilde{F}(\vb{z}_k)^T(\vb{z}_{k+1}-\vb{z}_k) + \dfrac{L'}{2}\norm{\vb{z}_{k+1}-\vb{z}_k}_2^2 + \delta_k' \nonumber\\
                        &\leq \widetilde{F}(\vb{z}_k) + t_k\overline{\grad} \widetilde{F}(\vb{z}_k)^T\vb{s}_k + \dfrac{L'}{2}t_k^2\norm{\vb{s}_k}_2^2 + \delta_k'. \label{eq:idl_2}
                    \end{align}
                    Using \eqref{eq:gdicond_a} and \eqref{eq:gdicond_b} in \eqref{eq:idl_2} yields \cref{eq:idl}.
                    % \begin{align*}
                    %     \widetilde{F}(\vb{z}_{k+1}) &\leq \widetilde{F}(\vb{z}_k) -t_kc_1'\norm{\overline{\grad} \widetilde{F}(\vb{z}_k)}_2^2 + (c_2')^2\dfrac{L't_k^2}{2}\norm{\overline{\grad} \widetilde{F}(\vb{z}_k)}_2^2 + \delta_k'\\
                    %     &\leq \widetilde{F}(\vb{z}_k) -t_k\left(c_1'-(c_2')^2\dfrac{L't_k}{2}\right)\norm{\overline{\grad} \widetilde{F}(\vb{z}_k)}_2^2 + \delta_k'.
                    % \end{align*}
            \end{proof}
            \begin{remark} 
                Note that \cref{lem:idl} does not inherently guarantee the monotonic descent property of $\left(\widetilde{F}(\vb{z}_k)\right)_{k\in\NN_0}$, even when $t_k < \frac{c_1'}{(c_2')^2}\frac{2}{L}$ is satisfied for all $k \in \NN_0$, owing to the presence of an additional error term in \cref{eq:idl}. This limitation was previously documented by \cite{devolder_first-order_2014}.
            \end{remark}
    
            When the sequence $(\delta_k)_{k\in\NN_0}$ is defined as a quantity proportional to the gradient or the inexact gradient norm, a common practice in the literature \cite{vasin_accelerated_2023}, it proves useful in ensuring the descent property. The subsequent result formalizes this observation.
    
            %It turns out that defining the sequence  as , as often happens in the literature, aids in ensuring the monotonic descent property even in an inexact setting.  The subsequent lemma provides evidence for this assertion.
            
            %In the literature, the sequence $(\delta_k)_{k\in\NN_0}$ is often defined as a quantity that is proportional to the gradient or the inexact gradient norm. It turns out that defining a sequence in this way aids in ensuring the monotonic descent property of $\widetilde{F}$, particularly within a specific range of step sizes.
            \begin{lemma}
                \label{lem:stronger}
                Suppose that \cref{a.1} and \eqref{eq_ek} hold. If there exists $\beta \in \left[0, \dfrac{1}{\sqrt{L'}}\dfrac{c_1'}{c_2'}\right)$ such that the sequence $(\delta_k)_{k\in\mathbb{N}_0}$ satisfies
                \begin{equation}
                     \label{eq:stronger_-1}
                    \delta_k \leq \beta\norm{\overline{\grad} \widetilde{F}(\vb{z}_k)}_2,  \quad \forall k \in \mathbb{N}_0,
                \end{equation}
                then
                \begin{enumerate}[label=(\alph*)]
                    \item \label{item:Fz<=}\begin{equation*}
                            \widetilde{F}(\vb{z}_{k+1})\leq \widetilde{F}(\vb{z}_k)- (c_2')^2\dfrac{L'}{2}\left(t_k - \overline{t}_1\right)\left(\overline{t}_2-t_k\right)\norm{\overline{\grad} \widetilde{F}(\vb{z}_k)}_2^2, \quad \forall k \in \mathbb{N}_0,
                        \end{equation*}
                        where
                        \begin{equation*}
                             \overline{t}_1 = \dfrac{c_1'-\sqrt{\Delta}}{(c_2')^2L'}, \quad \text{and} \quad \overline{t}_2 = \dfrac{c_1'+\sqrt{\Delta}}{(c_2')^2L'}, \quad \text{with}\quad \Delta = (c_1')^2 - L'(c_2')^2\beta^2,
                        \end{equation*}
                    \item \label{item:(t1t2)} and   
                            \begin{equation}
                                \label{eq:tk_int}
                               0 \leq \overline{t}_1 < \overline{t}_2 \leq  \dfrac{c_1'}{(c_2')^2}\dfrac{2}{L'}.
                            \end{equation} 
                    %Equality holds in this subset relation when $\beta = 0$.
                \end{enumerate}
            \end{lemma}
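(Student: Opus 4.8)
The plan is to start from \cref{lem:idl}, which under \cref{a.1} and \eqref{eq_ek} already gives
\[
\widetilde{F}(\vb{z}_{k+1}) \leq \widetilde{F}(\vb{z}_k) - t_k\left(c_1' - (c_2')^2\frac{L't_k}{2}\right)\norm{\overline{\grad}\widetilde{F}(\vb{z}_k)}_2^2 + \frac{\delta_k^2}{2}.
\]
The only task for part \ref{item:Fz<=} is to absorb the error term $\delta_k^2/2$ into the main (negative) term using the hypothesis $\delta_k \leq \beta\norm{\overline{\grad}\widetilde{F}(\vb{z}_k)}_2$. Substituting this bound gives $\delta_k^2/2 \leq (\beta^2/2)\norm{\overline{\grad}\widetilde{F}(\vb{z}_k)}_2^2$, so the right-hand side becomes
\[
\widetilde{F}(\vb{z}_k) - \left(t_k c_1' - (c_2')^2\frac{L't_k^2}{2} - \frac{\beta^2}{2}\right)\norm{\overline{\grad}\widetilde{F}(\vb{z}_k)}_2^2.
\]
I would then recognize the coefficient in parentheses as a quadratic in $t_k$: writing it as $-\tfrac{(c_2')^2 L'}{2}\left(t_k^2 - \tfrac{2c_1'}{(c_2')^2 L'} t_k + \tfrac{\beta^2}{(c_2')^2 L'}\right)$ and factoring the bracketed quadratic. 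Its discriminant (with respect to $t_k$) is $\tfrac{4(c_1')^2}{(c_2')^4 (L')^2} - \tfrac{4\beta^2}{(c_2')^2 L'} = \tfrac{4}{(c_2')^4 (L')^2}\left((c_1')^2 - L'(c_2')^2\beta^2\right) = \tfrac{4\Delta}{(c_2')^4(L')^2}$, so its roots are exactly $\overline{t}_1$ and $\overline{t}_2$ as defined, and the bracketed quadratic equals $(t_k - \overline{t}_1)(t_k - \overline{t}_2) = -(t_k - \overline{t}_1)(\overline{t}_2 - t_k)$. This yields precisely the claimed inequality in \ref{item:Fz<=}. The one point requiring care is that $\Delta > 0$, which follows from $\beta < \tfrac{1}{\sqrt{L'}}\tfrac{c_1'}{c_2'}$; I would note this explicitly so that $\sqrt{\Delta}$ is well-defined and the roots are real and distinct.

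For part \ref{item:(t1t2)}, I would establish the three inequalities in \eqref{eq:tk_int} separately. First, $\overline{t}_1 < \overline{t}_2$ is immediate since $\sqrt{\Delta} > 0$ and the denominators are positive. Second, $\overline{t}_1 \geq 0$ requires $c_1' \geq \sqrt{\Delta}$, i.e. $(c_1')^2 \geq \Delta = (c_1')^2 - L'(c_2')^2\beta^2$, which holds because $L'(c_2')^2\beta^2 \geq 0$. Third, $\overline{t}_2 \leq \tfrac{2c_1'}{(c_2')^2 L'}$ is equivalent to $c_1' + \sqrt{\Delta} \leq 2c_1'$, i.e. $\sqrt{\Delta} \leq c_1'$, which is the same inequality just used. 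I would also observe that $\overline{t}_1 + \overline{t}_2 = \tfrac{2c_1'}{(c_2')^2 L'}$, which makes the symmetry of the roots about the midpoint transparent and gives $\overline{t}_2 \leq \tfrac{2c_1'}{(c_2')^2 L'}$ as a one-line consequence of $\overline{t}_1 \geq 0$.

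There is essentially no hard part here: the proof is a direct computation once one spots that the coefficient of $\norm{\overline{\grad}\widetilde{F}(\vb{z}_k)}_2^2$ in the bound from \cref{lem:idl}, after substituting the error bound \eqref{eq:stronger_-1}, is a concave-up quadratic in $t_k$ whose roots are $\overline{t}_1,\overline{t}_2$. The only place to be attentive is bookkeeping: confirming $\Delta > 0$ from the stated range of $\beta$, keeping the sign conventions straight when rewriting $(t_k - \overline{t}_1)(t_k - \overline{t}_2)$ as $-(t_k - \overline{t}_1)(\overline{t}_2 - t_k)$, and verifying the endpoint bounds in \eqref{eq:tk_int} reduce to the single inequality $\sqrt{\Delta} \leq c_1'$. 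I would present the argument in that order — derive the quadratic, identify its roots via the discriminant, rewrite the descent inequality, then read off the ordering of the roots — so that part \ref{item:(t1t2)} falls out of the computations already done for part \ref{item:Fz<=}.
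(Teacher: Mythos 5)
Your proposal is correct and follows essentially the same route as the paper: substitute the error bound \eqref{eq:stronger_-1} into the descent inequality of \cref{lem:idl}, recognize the coefficient of $\norm{\overline{\grad}\widetilde{F}(\vb{z}_k)}_2^2$ as a concave quadratic in $t_k$ with roots $\overline{t}_1,\overline{t}_2$, and deduce \eqref{eq:tk_int} from $0<\Delta\le (c_1')^2$. Your version is slightly more detailed (explicit discriminant computation and the sum/product-of-roots check), but there is no substantive difference from the paper's argument.
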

            \begin{proof}
                Combining \eqref{eq:idl}, the definition of $\delta_k'$ in \cref{lem:idl} and \eqref{eq:stronger_-1} yields
                \begin{align*}
                     \widetilde{F}(\vb{z}_{k+1}) %&\leq \widetilde{F}(\vb{z}_{k})-t_k\left(c_1'-(c_2')^2\dfrac{L't_k}{2}\right)\norm{\overline{\grad} \widetilde{F}(\vb{z}_k)}_2^2 + \dfrac{\beta^2}{2}\norm{\overline{\grad} \widetilde{F}(\vb{z}_k)}_2^2\\
                     &\leq \widetilde{F}(\vb{z}_{k}) -\left(-(c_2')^2\dfrac{L'}{2}t_k^2 + c_1't_k -\dfrac{\beta^2}{2}\right)\norm{\overline{\grad} \widetilde{F}(\vb{z}_k)}_2^2\\
                     &\leq \widetilde{F}(\vb{z}_{k}) - (c_2')^2\dfrac{L'}{2}(t_k-\overline{t}_1)(\overline{t}_2-t_k)\norm{\overline{\grad} \widetilde{F}(\vb{z}_k)}_2^2,
                \end{align*}
                which completes the proof of \cref{item:Fz<=}. Now, observe that $\beta \in \left[0, \dfrac{1}{\sqrt{L'}}\dfrac{c_1'}{c_2'}\right)$ implies that $0 < \Delta \leq ( c_1')^2 $. So the roots $\overline{t}_1$ and $\overline{t}_2$ satisfies \eqref{eq:tk_int}. 
            \end{proof}
            \begin{remark}
                \cref{lem:stronger} guarantees monotonic descent if $t_k \in (\overline{t}_1, \overline{t}_2)$ for all $k$. The size of this interval depends on the error: larger $\beta$ (greater error) shrinks the interval, while smaller $\beta$ (less error) widens it. This limits how small the step size can be, unlike in the exact case.
            \end{remark}

            IGDM convergence can be analyzed using either \cref{lem:idl} or \cref{lem:stronger}. With \cref{lem:idl}, convergence is shown for bounded or diminishing step sizes if $(\delta_k)_{k\in\NN_0}$ is quadratically summable or summable, respectively. Without extra conditions, monotonic descent isn't guaranteed. In contrast, \cref{lem:stronger} guarantees monotonic descent, but only for bounded step sizes, since diminishing step sizes strategy certainly would not result in steps within an interval with a positive lower bound.
    
            \subsubsection{Convergence under a Bounded Step Size}

                This subsection proves that, provided the step sizes remain bounded by some specific values, any limit point of the sequence is a stationary point. 
                % \begin{comment}
                %     To begin, suppose there are constants  
                % $\overline{t} > t_1$ and $ \widetilde{t} < t_2$ such that $\overline{t} \leq \widetilde{t}$. Then, $\widetilde{t}$ can be expressed as 
                % \begin{equation*}
                %     \widetilde{t} = \dfrac{c_1' + \sqrt{\Delta} - \gamma}{(c_2')^2L'}, \quad \text{with} \quad 0 < \gamma \leq \underbrace{c_1' + \sqrt{\Delta} - \overline{t}(c_2')^2L'}_{> 0 \text{ since } \overline{t} < t_2}.
                % \end{equation*}
                %  Therefore, a bounded sequence of step sizes $(t_k)_{k \in \mathbb{N}_0}$ can be defined such that
                % \begin{equation}
                %     \label{eq:tk_v_igdm}
                %     t_k \in \left[\overline{t}, \dfrac{c_1' + \sqrt{\Delta} - \gamma}{(c_2')^2L'}\right] \subset \left(t_1, t_2\right), \quad \forall k \in \mathbb{N}_0.
                % \end{equation}
                %  Under condition \eqref{eq:tk_v_igdm}, the subsequent theorem asserts that every limit point of $(\vb{z}_k)_{k\in\NN_0}$, if it exists, is a stationary point of $\widetilde{F}$.
                % \end{comment}
                 
                \begin{theorem}
                    \label{thm:bound_tk}
                    Suppose that \cref{a.1} holds and that $\widetilde{F}$ is bounded below. 
                    Assume further that \cref{eq_ek} is satisfied by a non-negative sequence $(\delta_k)_{k \in \mathbb{N}_0}$ which fulfills \cref{eq:stronger_-1} with 
                    $\beta \in \left[0, \dfrac{1}{\sqrt{L'}}\dfrac{c_1'}{c_2'}\right)$. 
                    Let $\overline{t}_1$ and $\overline{t}_2$ be as in \cref{lem:stronger}, and let $\gamma \in \left(0, \dfrac{\overline{t}_2 - \overline{t}_1}{2}\right]$. 
                    If $t_k \in [\overline{t}_1 + \gamma,\, \overline{t}_2 - \gamma]$ for all $k \in \mathbb{N}_0$, then
                    %Let $\widetilde{F}: \mathbb{R}^m \to \mathbb{R}$ be an $L$-smooth function bounded below. Suppose that \cref{a.1} holds, and that \cref{eq_ek} is satisfied by a non-negative sequence $(\delta_k)_{k \in \mathbb{N}_0}$ which fulfills \cref{eq:stronger_-1} with $\beta \in \left[0, \dfrac{1}{\sqrt{L'}}\dfrac{c_1'}{c_2'}\right)$. 
                    %Let $\overline{t}_1$ and $\overline{t}_2$ be as in \cref{lem:stronger} and $\gamma \in \left(0,\dfrac{\overline{t}_2-\overline{t}_1}{2} \right]$. If  $t_k \in [\overline{t}_1+\gamma,\overline{t}_2-\gamma]$ for all $ k \in \mathbb{N}_0$,  then
                    \begin{enumerate}[label=(\alph*)]
                        \item \label{item:a} $\widetilde{F}(\vb{z}_{k+1}) \leq
                        \widetilde{F}(\vb{z}_k), \quad \forall k \in \mathbb{N}_0$.
                        \item \label{item:b} $\lim\limits_{k \to \infty} \overline{\grad} \widetilde{F}(\vb{z}_k) = \vb{0}_{\mathbb{R}^m}, \quad \text{and} \quad \lim\limits_{k \to \infty} \grad \widetilde{F}(\vb{z}_k) = \vb{0}_{\mathbb{R}^m}$.
                    \end{enumerate}
                    Furthermore, every limit point of $(\vb{z}_k)_{k\in\NN_0}$ is a stationary point of $\widetilde{F}$.
                \end{theorem}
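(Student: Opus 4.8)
The plan is to collapse everything to one telescoping inequality with a uniform positive descent coefficient. First I would invoke \cref{lem:stronger}\ref{item:Fz<=}, which under the present hypotheses gives, for every $k \in \NN_0$,
\begin{equation*}
\widetilde{F}(\vb{z}_{k+1}) \leq \widetilde{F}(\vb{z}_k) - (c_2')^2\dfrac{L'}{2}(t_k - \overline{t}_1)(\overline{t}_2 - t_k)\norm{\overline{\grad}\widetilde{F}(\vb{z}_k)}_2^2 .
\end{equation*}
Since $\beta \in \left[0, \tfrac{1}{\sqrt{L'}}\tfrac{c_1'}{c_2'}\right)$, \cref{lem:stronger}\ref{item:(t1t2)} guarantees $\overline{t}_1 < \overline{t}_2$, so the admissible range $\left(0, \tfrac{\overline{t}_2 - \overline{t}_1}{2}\right]$ for $\gamma$ is nonempty. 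The elementary but crucial observation is that for $t_k \in [\overline{t}_1 + \gamma,\, \overline{t}_2 - \gamma]$ one has $t_k - \overline{t}_1 \geq \gamma$ and $\overline{t}_2 - t_k \geq \gamma$ simultaneously (the interval being nonempty is exactly $\gamma \leq \tfrac{\overline{t}_2-\overline{t}_1}{2}$), hence $(t_k - \overline{t}_1)(\overline{t}_2 - t_k) \geq \gamma^2 > 0$. Setting $\mu := (c_2')^2\tfrac{L'}{2}\gamma^2 > 0$ then yields the uniform descent inequality $\widetilde{F}(\vb{z}_{k+1}) \leq \widetilde{F}(\vb{z}_k) - \mu\norm{\overline{\grad}\widetilde{F}(\vb{z}_k)}_2^2$ for all $k$; since the subtracted term is nonnegative, this is precisely item~\ref{item:a}.

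For item~\ref{item:b} I would telescope: summing the uniform descent inequality over $k = 0,\dots,N-1$ and using that $\widetilde{F}$ is bounded below by some $\widetilde{F}_{\inf} \in \RR$ gives $\mu\sum_{k=0}^{N-1}\norm{\overline{\grad}\widetilde{F}(\vb{z}_k)}_2^2 \leq \widetilde{F}(\vb{z}_0) - \widetilde{F}(\vb{z}_N) \leq \widetilde{F}(\vb{z}_0) - \widetilde{F}_{\inf}$. Letting $N \to \infty$ shows that $\sum_{k \geq 0}\norm{\overline{\grad}\widetilde{F}(\vb{z}_k)}_2^2$ converges, so its terms vanish, i.e.\ $\overline{\grad}\widetilde{F}(\vb{z}_k) \to \vb{0}_{\RR^m}$. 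The statement for the exact gradient then follows from the triangle inequality combined with \cref{eq_ek} and \cref{eq:stronger_-1}: $\norm{\grad\widetilde{F}(\vb{z}_k)}_2 \leq \norm{\overline{\grad}\widetilde{F}(\vb{z}_k)}_2 + \delta_k \leq (1+\beta)\norm{\overline{\grad}\widetilde{F}(\vb{z}_k)}_2 \to 0$.

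Finally, for the limit-point claim I would take any limit point $\vb{z}^*$ of $(\vb{z}_k)_{k\in\NN_0}$ and a subsequence $\vb{z}_{k_j} \to \vb{z}^*$; continuity of $\grad\widetilde{F}$ (a consequence of $L$-smoothness) gives $\grad\widetilde{F}(\vb{z}_{k_j}) \to \grad\widetilde{F}(\vb{z}^*)$, while item~\ref{item:b} forces $\grad\widetilde{F}(\vb{z}_{k_j}) \to \vb{0}_{\RR^m}$, whence $\grad\widetilde{F}(\vb{z}^*) = \vb{0}_{\RR^m}$. I do not expect a genuine obstacle here; the only point demanding care is verifying that the scalar factor $(t_k - \overline{t}_1)(\overline{t}_2 - t_k)$ admits a \emph{uniform} positive lower bound, which is exactly why the hypothesis confines $t_k$ to the closed subinterval $[\overline{t}_1 + \gamma,\, \overline{t}_2 - \gamma]$ strictly interior to $(\overline{t}_1, \overline{t}_2)$ rather than to the open interval itself — otherwise the per-step decrease could degenerate. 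Everything else is the classical telescoping argument for descent methods.
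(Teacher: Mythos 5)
Your argument is correct and follows essentially the same route as the paper's proof: apply \cref{lem:stronger}\ref{item:Fz<=}, bound $(t_k-\overline{t}_1)(\overline{t}_2-t_k)\geq\gamma^2$ to obtain the uniform descent constant $(c_2')^2\tfrac{L'}{2}\gamma^2$, telescope against the lower bound on $\widetilde{F}$, and pass to the exact gradient via \cref{eq_ek} and \cref{eq:stronger_-1}. No discrepancies to report.
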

                \begin{proof}
                    %\textcolor{blue}{Combinando \cref{item:Fz<=} \cref{lem:stronger} como o fato que $t_k \in [\overline{t}_1+\gamma,\overline{t}_2-\gamma]$, } 
                    Combining \cref{item:Fz<=} of \cref{lem:stronger} with the fact that $t_k \in [\overline{t}_1 + \gamma,\, \overline{t}_2 - \gamma]$,
                    we have that
                    \begin{equation}
                        \label{eq:bound_tk1}
                        \widetilde{F}(\vb{z}_{k+1})\leq \widetilde{F}(\vb{z}_k)- \dfrac{L'(c_2')^2 \gamma^2}{2} \norm{\overline{\grad} \widetilde{F}(\vb{z}_k)}_2^2\leq \widetilde{F}(\vb{z}_k), \quad \forall k \in \mathbb{N}_0.
                    \end{equation}
                     Summing  from $0$ to $K - 1$, with $K \geq 1$, results in
                    \begin{equation}
                        \label{eq:bound_tk4}
                        \widetilde{F}(\vb{z}_K) \leq \widetilde{F}(\vb{z}_0) - \dfrac{L'(c_2')^2 \gamma^2}{2}\sum\limits_{k = 0}^{K-1}\norm{\overline{\grad}\widetilde{F}(\vb{z}_k)}_2^2.
                    \end{equation}
                    Given that $\widetilde{F}$ is bounded below, it follows from \cref{eq:bound_tk4} that
                    \begin{equation}
                        \label{eq:bound_tk5}
                        \sum\limits_{k = 0}^{\infty}\norm{\overline{\grad}\widetilde{F}(\vb{z}_k)}_2^2 < \infty.
                    \end{equation}
                    Consequently,
                    \begin{equation}
                        \label{eq:bound_tk6}
                    \lim_{k\to\infty} \overline{\grad}\widetilde{F}(\vb{z}_k) = \vb{0}_{\RR^m}.
                    \end{equation}
                    Combining \cref{eq_ek,eq:stronger_-1,eq:bound_tk6} yields
                    \begin{equation}
                        \label{eq:bound_tk7}
                        \lim_{k\to\infty} \grad\widetilde{F}(\vb{z}_k) = \vb{0}_{\RR^m}.
                    \end{equation}
                    This concludes the proof of \cref{item:b}. By continuity, every limit point of $(\vb{z}_k)_{k\in\NN_0}$ is a stationary point of $\widetilde{F}$.
                \end{proof}
            
            \subsubsection{Convergence under Diminishing Step Size}

                We now analyze convergence under diminishing step sizes. 
                In contrast to the bounded (constant) step-size regime, which often requires tuning using problem-specific quantities such as a Lipschitz constant, we focus on a positive sequence $(t_k)_{k\in\NN_0}$ that satisfies
                \begin{align}
                     \label{tk_c1}
                    \sum\limits_{k=0}^{\infty} t_k^2 &< \infty,
                    \intertext{and}
                    \label{tk_c2}
                    \sum\limits_{k=0}^{\infty} t_k &= \infty.
                \end{align}
                These classical conditions guarantee sustained progress while steps vanish, thereby mitigating the need for precise knowledge of problem constants.

                For this step size strategy, our proof relies on the general argument presented in \cite{bertsekas_nonlinear_2016}, when analyzing the exact version of the method. In this context, the following technical auxiliary lemma is crucial.
    
                \begin{lemma}
                    \label{lemma:isl}
                    %Let $\widetilde{F}: \mathbb{R}^m \to \mathbb{R}$ be an $L$-smooth function. 
                    Suppose that \cref{a.1} holds, and that \eqref{eq_ek} is satisfied by a non-negative, summable sequence $(\delta_k)_{k \in \mathbb{N}_0}$. If
                    \begin{align}
                         \sum\limits_{k = 0}^{\infty} t_k\norm{\overline{\grad} \widetilde{F}(\vb{z}_k)}_2^2 < \infty, \label{eq:lim_norm_gine_0}
                         \intertext{and}
                         \lim\limits_{k \to \infty} \inf \norm{\overline{\grad}\widetilde{F}(\vb{z}_k)}_2 = 0, \label{eq:lim_norm_gine_1}
                    \end{align}
                    then
                    \begin{equation}
                        \label{eq:lim_norm_gine_r}
                        \lim\limits_{k \to \infty} \norm{\overline{\grad}\widetilde{F}(\vb{z}_k)}_2 = 0.
                    \end{equation}
                \end{lemma}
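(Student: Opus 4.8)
The plan is to argue by contradiction. Suppose $\limsup_{k\to\infty}\norm{\overline{\grad}\widetilde{F}(\vb{z}_k)}_2 =: c > 0$; if instead this $\limsup$ were $0$, then, the norms being nonnegative, \eqref{eq:lim_norm_gine_r} would be immediate. Throughout, write $g_k$ as shorthand for $\norm{\overline{\grad}\widetilde{F}(\vb{z}_k)}_2$. The goal is to show that such a positive $\limsup$ is incompatible with \eqref{eq:lim_norm_gine_0}, \eqref{eq:lim_norm_gine_1}, and the summability of $(\delta_k)_{k\in\NN_0}$.

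The first ingredient I would establish is a one-step bound on how much the inexact gradient norm can change. Applying the reverse triangle inequality, then the triangle inequality after inserting the exact gradients $\grad\widetilde{F}(\vb{z}_{k+1})$ and $\grad\widetilde{F}(\vb{z}_k)$, then the $L$-Lipschitz continuity of $\grad\widetilde{F}$ (which follows from $L$-smoothness in \cref{a.1}), then the update rule \eqref{eq:z_si} together with the direction bound \eqref{eq:gdicond_b}, and finally \eqref{eq_ek} at indices $k$ and $k+1$, one obtains
\begin{equation*}
    |g_{k+1}-g_k| \;\le\; \norm{\overline{\grad}\widetilde{F}(\vb{z}_{k+1})-\overline{\grad}\widetilde{F}(\vb{z}_k)}_2 \;\le\; \delta_k + \delta_{k+1} + L\,c_2'\,t_k\,g_k, \qquad \forall k\in\NN_0 .
\end{equation*}
This is a routine estimate that I would record before the main argument.

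The heart of the proof is a forward \emph{excursion} estimate. Since $\limsup_k g_k = c$, fix two levels $0 < \epsilon_1 < \epsilon_2 < c$ (for concreteness $\epsilon_1 = c/3$ and $\epsilon_2 = 2c/3$); then $g_{k^\ast} > \epsilon_2$ for infinitely many indices $k^\ast$. For each such $k^\ast$, hypothesis \eqref{eq:lim_norm_gine_1} forces $g_j < \epsilon_1$ infinitely often, so there is a smallest index $p = p(k^\ast) > k^\ast$ with $g_p < \epsilon_1$; by minimality of $p$ we have $g_j \ge \epsilon_1$ for every $j$ with $k^\ast \le j \le p-1$. Telescoping the one-step bound forward from $k^\ast$ to $p$ yields
\begin{equation*}
    \epsilon_2-\epsilon_1 \;<\; g_{k^\ast} - g_p \;=\; \sum_{j=k^\ast}^{p-1}\bigl(g_j-g_{j+1}\bigr) \;\le\; \sum_{j=k^\ast}^{p-1}\bigl(\delta_j+\delta_{j+1}+L\,c_2'\,t_j\,g_j\bigr).
\end{equation*}
Because every index $j$ in this finite sum satisfies $g_j \ge \epsilon_1$, we may bound $t_j g_j \le t_j g_j^2/\epsilon_1$, so the right-hand side is at most $2\sum_{j\ge k^\ast}\delta_j + (L c_2'/\epsilon_1)\sum_{j\ge k^\ast} t_j g_j^2$, i.e., the sum of tails of the two convergent series supplied by the summability of $(\delta_k)$ and by \eqref{eq:lim_norm_gine_0}. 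Letting $k^\ast\to\infty$ (so $p(k^\ast)\to\infty$ as well) these tails vanish, forcing $\epsilon_2-\epsilon_1\le 0$, a contradiction. Hence $\limsup_k g_k = 0$, which is \eqref{eq:lim_norm_gine_r}.

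The delicate point — the one I would be most careful with — is the \emph{direction} of the telescoping. It is tempting to trace an excursion backward, from a large-gradient index $k^\ast$ down to the last preceding index $m$ with $g_m \le \epsilon_1$; but the very first step of such an excursion contributes a term $L c_2' t_m g_m$ with $g_m \le \epsilon_1$, which cannot be absorbed into $\sum t_j g_j^2$ when the step sizes are not assumed bounded. Telescoping instead \emph{forward}, from $k^\ast$ to the next index $p$ with $g_p < \epsilon_1$, guarantees that every summand has $g_j \ge \epsilon_1$, which is exactly what legitimizes the substitution $t_j g_j \le t_j g_j^2/\epsilon_1$ and hence the appeal to \eqref{eq:lim_norm_gine_0}. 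Everything else — the reverse triangle inequality, the Lipschitz estimate, and the convergent-tail argument — is standard.
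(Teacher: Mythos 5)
Your proposal is correct and follows essentially the same route as the paper: both argue by contradiction from $\liminf=0$ together with a positive $\limsup$, both rest on the one-step bound $\norm{\overline{\grad}\widetilde{F}(\vb{z}_{k+1})-\overline{\grad}\widetilde{F}(\vb{z}_k)}_2\le\delta_{k+1}+\delta_k+Lc_2't_kg_k$, and both telescope \emph{forward} across a stretch where $g_j$ stays above a threshold so that $t_jg_j\le t_jg_j^2/\epsilon_1$ converts the sum into a vanishing tail of \eqref{eq:lim_norm_gine_0}. The only difference is packaging: the paper partitions the indices into alternating high/low segments and bounds every norm in a high segment by $2\epsilon/3$ to contradict the $\limsup$, whereas you bound the drop from a single peak to the next trough; these are the same estimate read in two equivalent ways.
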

                \begin{proof}
                    In order to derive a contradiction, suppose that \eqref{eq:lim_norm_gine_r} fails to hold. Consequently, there exists $\epsilon > 0$ such that
                     \begin{equation}
                        \label{eq:lim_inf_3}
                        \lim\limits_{k \to \infty} \sup \norm{\overline{\grad} \widetilde{F}(\vb{z}_k)}_2 \geq \epsilon.
                    \end{equation}
                   Since \cref{eq:lim_norm_gine_1,eq:lim_inf_3} hold, the sequence of inexact gradient norms exhibit an oscillatory pattern, which allows the partition of the sequence into segments characterized by relatively small or large norms. Accordingly, define $(m_j)_{j \in \mathbb{N}_0}$ and $(n_j)_{j \in \mathbb{N}_0}$ as sequences of indices such that
                    \begin{equation*}
                                m_j < n_j < m_{j + 1}, \quad \forall j \in \mathbb{N}_0,
                    \end{equation*} 
                    and
                    \begin{align}
                        \norm{\overline{\grad}\widetilde{F}(\vb{z}_k)}_2 > \dfrac{\epsilon}{3}, & \quad \text{for } m_j \leq k < n_j, \label{eq:lim_inf_4}\\[10pt]
                        \norm{\overline{\grad}\widetilde{F}(\vb{z}_k)}_2 \leq \dfrac{\epsilon}{3}, & \quad \text{for } n_j \leq k < m_{j + 1}. \label{eq:lim_inf_5}
                    \end{align}
                    Since $(\delta_k)_{k\in\mathbb{N}_0}$ is summable, given $\dfrac{\epsilon}{12}>0$ there exist a sufficiently large constant $j_1 \in \mathbb{N}_0$ such that
                    \begin{equation}
                        \label{eq:lim_norm_gine_2}
                        \sum\limits_{k = m_{j_1}}^{\infty}\delta_k < \dfrac{\epsilon}{12}.
                    \end{equation}
                    Analogously, based on \eqref{eq:lim_norm_gine_0}, %we can infer that for $\dfrac{\epsilon^2}{18L} > 0$ 
                    it is possible to determine a sufficiently large constant $\mathbb{N}_0 \ni j_2 > j_1 $ such that
                    \begin{equation}
                        \label{eq:lim_norm_gine_3}
                        \sum\limits_{k = m_{j_2}}^{\infty}
                        t_k\norm{\overline{\grad}\widetilde{F}(\vb{z}_k)}_2^2 < \dfrac{\epsilon^2}{18L}.
                    \end{equation}
                    
                    % Now, define $\overline{j} \in \mathbb{N}_0$ as
                    % \begin{equation*}
                    %     \overline{j} = \max(\overline{j}_1,\overline{j}_2).
                    % \end{equation*}
                    Now, note that
                    \begin{align}
                        \norm{\overline{\grad}\widetilde{F}(\vb{z}_{k+1}) - \overline{\grad}\widetilde{F}(\vb{z}_{k})}_2 
                        &\leq \norm{\overline{\grad}\widetilde{F}(\vb{z}_{k+1}) - \grad\widetilde{F}(\vb{z}_{k+1})}_2 + \norm{\grad\widetilde{F}(\vb{z}_{k+1})-\grad\widetilde{F}(\vb{z}_{k})}_2 \nonumber\\
                        & \qquad \qquad + \norm{\overline{\grad}\widetilde{F}(\vb{z}_{k}) - \grad\widetilde{F}(\vb{z}_{k})}_2\nonumber\\
                        &\leq \delta_{k+1} + L\norm{\vb{z}_{k+1}-\vb{z}_k}_2 + \delta_k.  \label{eq:lim_norm_gine_4} 
                    \end{align}
                    Consequently, for every $j \in \mathbb{N}_0$, where $j \geq j_2$, and each $m \in \mathbb{N}_0$ satisfying $m_j \leq m \leq n_j - 1$, it follows from \cref{eq:lim_norm_gine_2,eq:lim_norm_gine_4} that
                    \begin{align}
                        \norm{\overline{\grad}\widetilde{F}(\vb{z}_{n_{j}}) - \overline{\grad}\widetilde{F}(\vb{z}_{m})}_2 &\leq \sum\limits_{k = m}^{n_j-1}\norm{\overline{\grad}\widetilde{F}(\vb{z}_{k+1}) - \overline{\grad}\widetilde{F}(\vb{z}_{k})}_2\nonumber\\
                        &\leq \sum\limits_{k = m}^{n_j-1}\delta_{k+1}+ L \sum\limits_{k = m}^{n_j-1} \norm{\vb{z}_{k+1}-\vb{z}_k}_2  + \sum\limits_{k = m}^{n_j-1}\delta_k\nonumber\\
                        &= \dfrac{\epsilon}{6} + L \sum\limits_{k = m}^{n_j-1} t_k \norm{\overline{\grad}\widetilde{F}(\vb{z}_{k})}_2. \label{eq:diff_grad}
                    \end{align}
                    By exploiting \cref{eq:lim_inf_4,eq:lim_norm_gine_3}, the right-hand side of \cref{eq:diff_grad} simplifies to
                    \begin{align}
                        \norm{\overline{\grad}\widetilde{F}(\vb{z}_{n_{j}}) - \overline{\grad}\widetilde{F}(\vb{z}_{m})}_2 &\leq \dfrac{\epsilon}{6} + \dfrac{3L}{\epsilon} \sum\limits_{k = m}^{n_j-1} t_k \norm{\overline{\grad}\widetilde{F}(\vb{z}_{k})}_2^2\nonumber\\
                        &\leq \dfrac{\epsilon}{6} + \dfrac{3L}{\epsilon}\dfrac{\epsilon^2}{18L}\nonumber\\
                        &=\dfrac{\epsilon}{3}. \label{eq:lim_norm_gine_5} 
                    \end{align}
                    Based on \eqref{eq:lim_inf_5} and \eqref{eq:lim_norm_gine_5}, it can be established that
                    \begin{align}
                        \norm{\overline{\grad} \widetilde{F}(\vb{z}_m)}_{2} %&= \norm{\overline{\grad} \widetilde{F}(\vb{z}_m) - \overline{\grad} \widetilde{F}(\vb{z}_{n_j}) + \overline{\grad} \widetilde{F}(\vb{z}_{n_j})}_2\nonumber\\
                        &\leq \norm{\overline{\grad} \widetilde{F}(\vb{z}_m) - \overline{\grad} \widetilde{F}(\vb{z}_{n_j})}_2 + \norm{\overline{\grad} \widetilde{F}(\vb{z}_{n_j})}_2 \nonumber\\
                        &\leq \dfrac{\epsilon}{3} + \dfrac{\epsilon}{3} \nonumber\\
                        &= \dfrac{2\epsilon}{3}, \quad \forall j \geq j_2, \quad m_j \leq m \leq n_j - 1.  \label{eq:lim_inf_8}
                    \end{align}
                    By combining \cref{eq:lim_inf_5,eq:lim_inf_8}, it is immediate that
                    \begin{equation*} 
                          \norm{\overline{\grad} \widetilde{F}(\vb{z}_m)}_{2} \leq \dfrac{2\epsilon}{3},  \quad \forall j \geq j_2, \quad m_j \leq m < m_{j + 1}
                    \end{equation*}
                    which means that
                    \begin{equation*}
                        \norm{\overline{\grad} \widetilde{F}(\vb{z}_m)}_{2} \leq \dfrac{2\epsilon}{3}, \quad \forall m \geq m_{j_2}. 
                    \end{equation*}
                    However, this contradicts \eqref{eq:lim_inf_3}. Therefore, \eqref{eq:lim_norm_gine_r} must holds.
                \end{proof}
                
                Utilizing \cref{lemma:isl}, we can complete the convergence proof for the diminishing step-size scenario, as detailed in the theorem below.
                %establishing that any limit point of the sequence $(\vb{z}_k)_{k\in\NN_0}$, if it exists, constitutes a stationary point of $\widetilde{F}$,
                \begin{theorem}
                    \label{thm:c_a_ds}
                    Suppose that \cref{a.1} holds and that $\widetilde{F}$ is bounded below. 
                    Assume further that \eqref{eq_ek} is satisfied by a non-negative, summable sequence $(\delta_k)_{k \in \mathbb{N}_0}$. 
                    If the sequence $(t_k)_{k \in \mathbb{N}_0}$ is diminishing in accordance with \eqref{tk_c1} and \eqref{tk_c2}, then
                    %Let $\widetilde{F}: \mathbb{R}^m \to \mathbb{R}$ be an $L$-smooth function bounded below. Suppose that \eqref{a.1} holds, and that \eqref{eq_ek} is satisfied by a non-negative, summable sequence $(\delta_k)_{k \in \mathbb{N}_0}$. If the sequence $(t_k)_{k \in \mathbb{N}_0}$ is diminishing in accordance with \eqref{tk_c1} and \eqref{tk_c2}, then
                    \begin{equation*}
                        \lim_{k \to \infty} \overline{\grad} \widetilde{F}(\vb{z}_k) = \vb{0}_{\mathbb{R}^m}, \quad \text{and} \quad\lim_{k \to \infty} \grad \widetilde{F}(\vb{z}_k) = \vb{0}_{\mathbb{R}^m}.
                    \end{equation*}
                    Furthermore, every limit point of $(\vb{z}_k)_{k\in\NN_0}$ is a stationary point of $\widetilde{F}$.
                \end{theorem}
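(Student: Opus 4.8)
The plan is to mirror the bounded–step–size argument of \cref{thm:bound_tk}, but now driven by \cref{lem:idl} rather than \cref{lem:stronger}, and to finish by upgrading a $\liminf$ statement to a full limit via \cref{lemma:isl}. First I would observe that \eqref{tk_c1} forces $t_k \to 0$, so there is an index $K_0 \in \NN_0$ beyond which $c_1' - (c_2')^2 L' t_k/2 \geq c_1'/2 > 0$. For all $k \geq K_0$, \cref{lem:idl} then gives
\[
    \widetilde{F}(\vb{z}_{k+1}) \leq \widetilde{F}(\vb{z}_k) - \frac{c_1'}{2}\, t_k\, \norm{\overline{\grad}\widetilde{F}(\vb{z}_k)}_2^2 + \frac{\delta_k^2}{2}.
\]
Summing from $K_0$ to $K-1$ and using that $\widetilde{F}$ is bounded below, together with the fact that a non-negative summable sequence is bounded (hence $\sum_k \delta_k^2 \leq (\sup_k \delta_k)\sum_k \delta_k < \infty$), yields $\sum_{k=0}^{\infty} t_k \norm{\overline{\grad}\widetilde{F}(\vb{z}_k)}_2^2 < \infty$, which is precisely \eqref{eq:lim_norm_gine_0}.

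Next I would establish \eqref{eq:lim_norm_gine_1}, i.e.\ $\liminf_{k\to\infty}\norm{\overline{\grad}\widetilde{F}(\vb{z}_k)}_2 = 0$, by contradiction: if it failed there would be $\epsilon > 0$ and $K_1$ with $\norm{\overline{\grad}\widetilde{F}(\vb{z}_k)}_2 \geq \epsilon$ for all $k \geq K_1$, whence $\sum_{k \geq K_1} t_k \norm{\overline{\grad}\widetilde{F}(\vb{z}_k)}_2^2 \geq \epsilon^2 \sum_{k \geq K_1} t_k = \infty$ by \eqref{tk_c2}, contradicting \eqref{eq:lim_norm_gine_0}. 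With \eqref{eq:lim_norm_gine_0} and \eqref{eq:lim_norm_gine_1} in hand and $(\delta_k)_{k\in\NN_0}$ summable, \cref{lemma:isl} applies verbatim and gives $\lim_{k\to\infty}\norm{\overline{\grad}\widetilde{F}(\vb{z}_k)}_2 = 0$, hence $\lim_{k\to\infty}\overline{\grad}\widetilde{F}(\vb{z}_k) = \vb{0}_{\RR^m}$. The exact-gradient conclusion follows from $\norm{\grad\widetilde{F}(\vb{z}_k)}_2 \leq \norm{\overline{\grad}\widetilde{F}(\vb{z}_k)}_2 + \delta_k$ combined with $\delta_k \to 0$ (itself implied by summability), and continuity of $\grad\widetilde{F}$ then makes every limit point of $(\vb{z}_k)_{k\in\NN_0}$ stationary.

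The main obstacle is essentially packaged into \cref{lemma:isl}, which carries out the oscillation bookkeeping between segments of small and large inexact-gradient norms; within the present theorem the only delicate point is deriving \eqref{eq:lim_norm_gine_0}, and there the essential ingredients are the \emph{summability} of $(\delta_k)_{k\in\NN_0}$ (not merely $\delta_k \to 0$), which controls the accumulated error $\sum_k \delta_k^2/2$, and $t_k \to 0$, which makes the coefficient of $\norm{\overline{\grad}\widetilde{F}(\vb{z}_k)}_2^2$ eventually positive so that the telescoped sum is genuinely a descent estimate. Everything else is a short chain of inequalities and the contradiction argument using $\sum_k t_k = \infty$.
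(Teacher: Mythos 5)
Your proposal is correct and follows essentially the same route as the paper's own proof: both derive $\sum_k t_k \norm{\overline{\grad}\widetilde{F}(\vb{z}_k)}_2^2 < \infty$ from \cref{lem:idl} after using $t_k \to 0$ to make the descent coefficient eventually bounded below by a positive constant (your choice $c_1'/2$ versus the paper's generic $\widetilde{c} \in (0,c_1')$ is immaterial), then obtain the $\liminf$ statement by contradiction via \eqref{tk_c2}, upgrade it to a full limit with \cref{lemma:isl}, and conclude via the triangle inequality and continuity. No gaps.
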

                \begin{proof}
                    By \cref{lem:idl}, we have that
                    \begin{equation}
                        \label{thm:conv_ineg_pd0}
                        \widetilde{F}(\vb{z}_{k+1}) \leq
                        \widetilde{F}(\vb{z}_k) -t_k\left(c_1'-(c_2')^2\dfrac{L't_k}{2}\right)\norm{\overline{\grad} \widetilde{F}(\vb{z}_k)}_2^2 + \delta_k', \quad \forall k \in \mathbb{N}_0.
                    \end{equation}
                    Since \((t_k)_{k \in \mathbb{N}_0}\) is quadratically summable, it converges to zero. Thus, for a given $\widetilde{c} \in (0, c_1')$ there exists a constant \(k_1 \in \mathbb{N}\) such that
                    \begin{align}
                        t_k \leq \frac{2(c_1'-\widetilde{c})}{(c_2')^2L'}, \quad \forall k \geq k_1.\nonumber
                        \intertext{Rearranging terms yields}
                        -\widetilde{c} \geq -\left(c_1' - (c_2')^2\dfrac{L' t_k}{2}\right), \quad \forall k \geq k_1.  \label{thm:conv_ineg_pd1}
                    \end{align}
                    Using \eqref{thm:conv_ineg_pd1} in \eqref{thm:conv_ineg_pd0}, we arrive at
                    \begin{equation}
                        \label{thm:conv_ineg_pd2}
                         \widetilde{F}(\vb{z}_{k+1}) \leq \widetilde{F}(\vb{z}_k) -\widetilde{c}t_k\norm{\overline{\grad} \widetilde{F}(\vb{z}_k)}_2^2 + \delta'_{k}, \quad \forall k \geq k_1.
                    \end{equation}
                    By summing \cref{thm:conv_ineg_pd2} over $k = k_1, \ldots, K-1$, with $K-1 > k_1$, we obtain
                    \begin{equation}
                        \label{thm:conv_ineg_pd3}
                        \widetilde{F}(\vb{z}_{K}) \leq \widetilde{F}(\vb{z}_{k_1}) + \dfrac{1}{2}\sum\limits_{k = k_1}^{K-1} \delta_k^2 - \widetilde{c} \sum\limits_{k = k_1}^{K-1} t_k\norm{\overline{\grad} \widetilde{F}(\vb{z}_k)}_2^2.
                    \end{equation}
                    As $(\delta_k)_{k \in \mathbb{N}_0}$ is nonnegative and summable, it follows that it is also quadratically summable. Thus, using the fact that $\widetilde{F}$ is bounded below, we obtain  that
                    \begin{equation}
                        \label{thm:conv_ineg_pd5}
                        \sum\limits_{k = k_1}^{\infty} t_k\norm{\overline{\grad} \widetilde{F}(\vb{z}_k)}_2^2 < \infty.
                    \end{equation}
                    
                    Now, suppose for contradiction that there exist a constant $\epsilon > 0$, and $k_2 \in \mathbb{N}$ such that
                    \begin{equation*}
                        \norm{\overline{\grad} \widetilde{F}(\vb{z}_k)}_2 \geq \epsilon, \quad \forall k \geq k_2.
                    \end{equation*}
                    Thus, since $(t_k)_{k \in \mathbb{N}_0}$ is not summable, we have
                    \begin{equation*}
                        \sum\limits_{k = k_2}^{\infty} t_k\norm{\overline{\grad} \widetilde{F}(\vb{z}_k)}_2^2 \geq \epsilon^2 \sum \limits_{k=k_2}^{\infty}t_k = \infty.
                    \end{equation*}
                    However, this contradicts condition \cref{thm:conv_ineg_pd5}. Hence,
                    \begin{equation}
                        \label{thm:conv_ineg_pd6}
                        \lim \inf\limits_{k \to \infty} \norm{\overline{\grad}\widetilde{F}(\vb{z}_k)}_2 = 0.
                    \end{equation}
                    By virtue of conditions \cref{thm:conv_ineg_pd5,thm:conv_ineg_pd6}, \cref{lemma:isl} ensures that
                    \begin{equation}
                         \label{thm:conv_ineg_pd7}
                        \lim\limits_{k \to \infty} \overline{\grad}\widetilde{F}(\vb{z}_k) = \vb{0}_{\mathbb{R}^m}.
                    \end{equation}
                    From the triangle inequality and \cref{eq_ek}, we obtain
                    \begin{align}
                        \norm{\grad \widetilde{F}(\vb{z}_k)}_2 %&= \norm{\overline{\grad} \widetilde{F}(\vb{z}_k) + \grad \widetilde{F}(\vb{z}_k) -\overline{\grad} \widetilde{F}(\vb{z}_k)}_2 \nonumber\\
                        &\leq \norm{\overline{\grad} \widetilde{F}(\vb{z}_k)}_2 + \norm{\grad \widetilde{F}(\vb{z}_k) - \overline{\grad} \widetilde{F}(\vb{z}_k)}_2 \nonumber \\
                        &\leq \norm{\overline{\grad} \widetilde{F}(\vb{z}_k)}_2 + \delta_k. \label{thm:conv_ineg_pd8}
                    \end{align}
                    As both terms on the right-hand side of \eqref{thm:conv_ineg_pd8} approach zero as $k \to \infty$,  the proof is completed.
                \end{proof}

            \subsection{Computation of the General Direction }

                Having established the convergence  analysis of \cref{alg:igd_wr}, attention now turns to its practical implementation. Fortunately, a procedure can be formulated for selecting a general direction $\vb{s}_k$ that inherently fulfills the conditions in \cref{eq:gdi}. The following theorem formalizes this result. The statement parallels the sufficient-descent results for Newton-type directions in \citet{forsgren_gill_murray_1995}, adapted here to allow inexact gradients on the right-hand side and to ensure positive definiteness via a uniformly bounded spectrum.
                
                \begin{theorem}
                    \label{thm:sk_dir}
                    Let $\widetilde{F}:\RR^m \to \RR$ be a differentiable function. Given an initial point $\vb{z}_0 \in \mathbb{R}^m$, consider the sequence $(\vb{z}_k)_{k \in \mathbb{N}}$ generated by \cref{eq:z_si} using a positive sequence of step sizes $(t_k)_{k \in \mathbb{N}_0}$, and a general direction $\vb{s}_k \in \mathbb{R}^m$ that satisfies
                    \begin{equation*}
                     %   \label{thm:sk_dir1}
                        \vb{H}_k \vb{s}_k = -\,\overline{\grad}\widetilde{F}(\vb{z}_k), \quad \forall k \in \NN_0,
                    \end{equation*}
                    where $\vb{H}_k \in \mathbb{R}^{m\times m}$ is a real symmetric with a uniformly bounded spectrum matrix, which means that there exist constants $\mu,\eta>0$ such that
                    \begin{equation*}
                      %  \label{thm:sk_dir2}
                        0 < \mu \le \lambda_i\!\left(\vb{H}_k\right) \le \eta, \quad \forall i \in \{1,\ldots,m\}, \ \forall k \in \NN_0,
                    \end{equation*}
                    where $\lambda_i(\vb{H}_k)$ denotes the $i$th eigenvalue of $\vb{H}_k$. Then the conditions \cref{eq:gdicond_a,eq:gdicond_b} are valid for some pair of constants $c_1'>0$ and $c_2'>0$.
                \end{theorem}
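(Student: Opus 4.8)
The plan is to solve the defining equation explicitly for $\vb{s}_k$ and then convert the uniform spectral bounds on $\vb{H}_k$ into the two directional conditions by means of Rayleigh-quotient estimates. Since every eigenvalue of $\vb{H}_k$ is at least $\mu>0$, the symmetric matrix $\vb{H}_k$ is positive definite, hence nonsingular, and the update equation gives $\vb{s}_k = -\vb{H}_k^{-1}\overline{\grad}\widetilde{F}(\vb{z}_k)$ for every $k\in\NN_0$. Moreover $\vb{H}_k^{-1}$ is again symmetric positive definite, with eigenvalues $\lambda_i(\vb{H}_k^{-1}) = 1/\lambda_i(\vb{H}_k)$, so that $\lambda_i(\vb{H}_k^{-1})\in[1/\eta,\,1/\mu]$ for all $i$ and all $k$.

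For condition \eqref{eq:gdicond_a}, I would compute
\[
-\,\overline{\grad}\widetilde{F}(\vb{z}_k)^T\vb{s}_k
  = \overline{\grad}\widetilde{F}(\vb{z}_k)^T\vb{H}_k^{-1}\overline{\grad}\widetilde{F}(\vb{z}_k),
\]
and then invoke the lower Rayleigh bound for the symmetric matrix $\vb{H}_k^{-1}$, namely $\vb{x}^T\vb{H}_k^{-1}\vb{x}\ge \lambda_{\min}(\vb{H}_k^{-1})\norm{\vb{x}}_2^2$ for all $\vb{x}\in\RR^m$, together with $\lambda_{\min}(\vb{H}_k^{-1}) = 1/\lambda_{\max}(\vb{H}_k)\ge 1/\eta$. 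This yields \eqref{eq:gdicond_a} with the choice $c_1' = 1/\eta>0$, the same for every $k$. For condition \eqref{eq:gdicond_b}, I would use that the operator $2$-norm of a symmetric matrix equals its spectral radius, so $\norm{\vb{H}_k^{-1}}_2 = \lambda_{\max}(\vb{H}_k^{-1}) = 1/\lambda_{\min}(\vb{H}_k)\le 1/\mu$; then submultiplicativity gives $\norm{\vb{s}_k}_2 = \norm{\vb{H}_k^{-1}\overline{\grad}\widetilde{F}(\vb{z}_k)}_2 \le \norm{\vb{H}_k^{-1}}_2\,\norm{\overline{\grad}\widetilde{F}(\vb{z}_k)}_2 \le (1/\mu)\norm{\overline{\grad}\widetilde{F}(\vb{z}_k)}_2$, which is \eqref{eq:gdicond_b} with $c_2' = 1/\mu>0$.

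I do not expect a genuine obstacle here: the argument reduces to the standard spectral facts for real symmetric matrices (the Rayleigh-quotient two-sided bounds and the identification of the spectral norm with the spectral radius) and the elementary fact that inversion reciprocates eigenvalues. The only point worth stressing is that the hypothesis supplies bounds $\mu$ and $\eta$ that are \emph{independent of $k$}, so the resulting $c_1' = 1/\eta$ and $c_2' = 1/\mu$ are true constants rather than iteration-dependent quantities; this uniformity is exactly what \cref{a.1} and the convergence theorems in \cref{subsec:conv_anal} require. One could alternatively try to deduce \eqref{eq:gdicond_b} from \eqref{eq:gdicond_a} via Cauchy--Schwarz, but that only produces a \emph{lower} bound on $\norm{\vb{s}_k}_2$, so the operator-norm estimate on $\vb{H}_k^{-1}$ is genuinely needed for the second condition.
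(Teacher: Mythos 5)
Your argument is correct and complete: positive definiteness from the uniform lower spectral bound, the Rayleigh-quotient estimate $\overline{\grad}\widetilde{F}(\vb{z}_k)^T\vb{H}_k^{-1}\overline{\grad}\widetilde{F}(\vb{z}_k)\ge(1/\eta)\norm{\overline{\grad}\widetilde{F}(\vb{z}_k)}_2^2$ for \eqref{eq:gdicond_a}, and the operator-norm bound $\norm{\vb{H}_k^{-1}}_2\le 1/\mu$ for \eqref{eq:gdicond_b}. The paper does not write this out but simply cites Lemma~2.3 of \citet{forsgren_gill_murray_1995}, whose proof is exactly the spectral argument you give with the same constants $c_1'=1/\eta$ and $c_2'=1/\mu$, so your proposal coincides with the paper's (delegated) approach.
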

                \begin{proof}
                    See Lemma~2.3 in \cite{forsgren_gill_murray_1995}. 
                \end{proof}

\section{Adjoint-Based Gradient Error}\label{subsec:adj_error}
The convergence results in \cref{subsec:conv_anal} are based on the assumption of a bounded inexact-gradient error \cref{eq_ek}. 
            This condition holds for several sensitivity schemes, including finite differences, linear interpolation, and Gaussian and spherical smoothing \cite{berahas_theoretical_2022}. 
            Importantly, this requirement is also met by the discrete adjoint method, which we use in this work to evaluate the sensitivities of the objective functions in our study cases. Below, we outline why the adjoint-based gradient meets the bounded-error condition, drawing on the framework of \cite{brown_effect_2022}, which analyzes how inexact state and adjoint solves propagate to gradient inaccuracies in the discrete adjoint method.

            Suppose that a state equation of the form given by \cref{eq:ge_sys} is approximately solved in each iteration as follows:
            \begin{align}
                \norm{\vb{R}\left(\vb{z}_k,\overline{\vb{u}(\vb{z}_k)}\right)}_2 &\leq \tau_{\mathcal{R},k}, \quad \forall k \in \NN_0, \label{eq:R_tol}
                \intertext{where $\overline{\vb{u}(\vb{z}_k)} \in \RR^n$ is an approximation for $\vb{u}(\vb{z}_k)$, and $\tau_{\mathcal{R},k} > 0$ is an adaptive state tolerance. Similarly, given these approximate states, assume that the adjoint equation in \cref{eq:adj_eq} is solved inexactly, namely,}\norm{\vb{R}_{\psi}\left(\overline{\bm{\psi}}_k;\vb{z}_k, \overline{\vb{u}(\vb{z}_k)}\right)}_2 &\leq \tau_{\psi,k}, \quad \forall k \in \NN_0, \label{eq:R_psi_tol}
            \end{align}
            where $\overline{\bm{\psi}}_k\in \RR^n$ denotes an approximation of the exact solution of the adjoint equation with inexact state variables, and $\tau_{\psi,k} > 0$ is an adaptive adjoint tolerance. Using both the inexact state and adjoint variables obtained from \cref{eq:R_tol,eq:R_psi_tol}, we obtain the following inexact gradient from \cref{eq:opt_nand}:
            \begin{equation}
                \label{eq:ine_grad}
                \overline{\grad} \widetilde{F}(\vb{z}_k) = 
                \grad_{\vb{z}} F \left(\vb{z}_k,\overline{\vb{u}(\vb{z}_k)}\right) -  
                \left[\pdv{\vb{R}}{\vb{z}}\bigg\rvert_{\left(\vb{z}_k,\overline{\vb{u}(\vb{z}_k)}\right)}\right]^\top\overline{\bm{\psi}_k}, \quad \forall k \in \NN_0.
            \end{equation}  
            For linear state equations as described by \cref{eq:ge_sys}, it can be shown from the theoretical framework established by \cite{brown_effect_2022} that the inexact gradient presented in \cref{eq:ine_grad} satisfies
            \begin{equation}
                \label{eq:da_bound}
                \norm{\overline{\grad}\widetilde{F}(\vb{z}_k)-\grad\widetilde{F}(\vb{z}_k)}_2 \leq C'\tau_{\mathcal{R},k} + C''\tau_{\psi,k}, \quad \forall k \in \mathbb{N}_0,
            \end{equation}
            where $C' > 0$ and $C'' > 0$ are constants that depend on the Lipschitz continuity and boundedness of the partial Jacobians of $F$ and $\vb{R}$ with respect to both the control and state variables. Thus, if we define the state and adjoint tolerances as
            \begin{equation}
                \label{eq:tau_r_psi}
                \tau_{\mathcal{R},k} \leq \gamma_1 \norm{\overline{\grad}\widetilde{F}(\vb{z}_k)}_2,  \quad \text{and} \quad
                \tau_{\psi,k} \leq \gamma_2\norm{\overline{\grad}\widetilde{F}(\vb{z}_k)}_2, \quad \forall k \in \NN_0,
            \end{equation}
            where $\gamma_1 \in \left(0,\dfrac{\beta}{2C'}\right]$ and $\gamma_2 \in \left(0,\dfrac{\beta}{2C''}\right]$ are defined constants for a given  $\beta>0$, it follows from \cref{eq:da_bound,eq:tau_r_psi} that
            \begin{align}
                 \norm{\overline{\grad}\widetilde{F}(\vb{z}_k)-\grad\widetilde{F}(\vb{z}_k)}_2 
                 %&\leq \left(C'\gamma_1 + C''\gamma_2\right)\norm{\overline{\grad}\widetilde{F}(\vb{z}_k)}_2 \nonumber\\
                 &\leq \left(C'\dfrac{\beta}{2C'} + C''\dfrac{\beta}{2C''}\right)\norm{\overline{\grad}\widetilde{F}(\vb{z}_k)}_2\nonumber\\
                 &\leq \beta \norm{\overline{\grad}\widetilde{F}(\vb{z}_k)}_2, \quad \forall k \in \NN_0. \label{eq:ine_grad_err}
            \end{align}
            The established condition corresponds with that outlined in \cref{eq:stronger_-1} if $\beta$ is small enough. Consequently, we can ensure the convergence of the IGDM by employing a suitable constant step size, with gradients computed inexactly through the discrete adjoint method.

            To enforce \cref{eq:ine_grad_err}, the inexact gradient at iteration $k\ge0$ is computed via a discrete-adjoint test-and-tighten scheme (\cref{alg:aisara}) with a state–adjoint nesting. Because the target bounds in \cref{eq:da_bound} depend on the (unknown) inexact gradient, we start with trial tolerances and tighten them geometrically as needed. Given $\vb{z}_k$, the outer loop solves the state equation \eqref{eq:R_tol} up to a trial target $\tau_{\mathcal R,k}^{(i)}$.  An inner loop solves the adjoint equation \eqref{eq:R_psi_tol} up  to $\tau_{\psi,k}^{(j)}$ and compute the inexact gradient, and it is repeated tightening $\tau_{\psi,k}^{(j)}$ until the adjoint residual meets the bound proportional to $\gamma_2$ times the inexact gradient. Then we check if the state residual meets the required bound, proportional to $\gamma_1$ times the inexact gradient.  If the criterion is satisfied, the approximate value of the state variables is set, otherwise $\tau_{\mathcal R,k}^{(i)}$ is tightened and the outer loop repeats. The procedure terminates whenever the exact gradient is not null, ensuring that both bounds in \eqref{eq:R_tol} and \eqref{eq:R_psi_tol} are satisfied simultaneously for
                \begin{equation} \label{eq:def_taus}
                    \tau_{\mathcal R,k}=\gamma_1\norm{\overline{\grad}\widetilde F(\vb z_k)}_2,
                \quad \text{and} \quad
                \tau_{\psi,k}=\gamma_2\norm{\overline{\grad}\widetilde F(\vb z_k)}_2,
                \end{equation} which guarantees \cref{eq:ine_grad_err}.  The trial tolerances $\tau_{\mathcal R,k}^{(i)}$ and $\tau_{\psi,k}^{(j)}$ are recognized only as iterates used to discover these levels. 
                
                As we can see, the test-and-tighten scheme allows us to ensure that the inexact gradient error satisfies \cref{eq:ine_grad_err}. This differs from the approach in \cite{brown_effect_2022}, where the authors employ a purely heuristic strategy, fixing the tolerances in the solution of the state and adjoint equations to be proportional to the inexact gradient norm at the previous iterate.

                \newpage
                \begin{algorithm}[H]
                    \small
                    \DontPrintSemicolon
                    \caption{\textsc{Adaptive Inexact State–Adjoint Residual Algorithm}}
                    \label{alg:adap}
                    % \KwData{Initial tolerances $\tau_{\mathcal{R}}^{(0)}$, $\tau_{\psi}^{(0)}$; 
                    % lower bounds $\underline{\tau_{\mathcal{R}}}$, $\underline{\tau_{\psi}}$; 
                    % and $\gamma_1 \in \left(0,\dfrac{\beta}{2C'}\right]$, $\gamma_2 \in \left(0,\dfrac{\beta}{2C''}\right]$. \textcolor{blue}{recebo também z e o gradiente anterior, certo?}}
                    \KwData{Current iterate $\vb{z}_k$; previous gradient norm $\norm{\overline{\grad} \widetilde{F}(\vb{z}_{k-1})}_2$; 
        initial tolerances $\tau_{\mathcal{R}}^{(0)}$, $\tau_{\psi}^{(0)}$; 
        lower bounds $\underline{\tau_{\mathcal{R}}}$, $\underline{\tau_{\psi}}$; 
        and parameters $\gamma_1 \in \left(0,\dfrac{\beta}{2C'}\right]$, $\gamma_2 \in \left(0,\dfrac{\beta}{2C''}\right]$.}
                    \label{alg:aisara}
                    Set $i \gets 0$ and define the state tolerance as\;
                    \If{$k = 0$}{
                        $\tau_{\mathcal{R},k}^{(i)} \gets \tau_{\mathcal{R}}^{(0)}$\;
                    }
                    \Else{
                        $\tau_{\mathcal{R},k}^{(i)} \gets 
                        \min\!\left(0.5 \cdot \gamma_1 
                        \norm{\overline{\grad} \widetilde{F}(\vb{z}_{k-1})}_2,
                        \tau_{\mathcal{R}}^{(0)} \right)$\;
                    }
                    % Set the state tolerance: $\tau_{\mathcal{R}, k}^{(i)} \gets (k = 0) \:\:?\:\: \tau_{\mathcal{R}}^{(0)} \::\: \min\left(\num{0.5}\cdot \gamma_1\norm{\overline{\grad} \widetilde{F}(\vb{z}_{k-1})}_2, \tau_{\mathcal{R}}^{(0)} \right)$\;
                    \Repeat{$\norm{\vb{R}\left(\vb{z}_k, \overline{\vb{u}(\vb{z}_k)}\right)}_2 \leq \gamma_1\norm{\overline{\grad} \widetilde{F}(\vb{z}_k)}_2$}{
                         Compute $\overline{\vb{u}(\vb{z}_k)} \in\RR^n$ by solving $\norm{\vb{R}\left(\vb{z}_k,\overline{\vb{u}(\vb{z}_k)}\right)}_2 \leq \tau_{\mathcal{R}, k}^{(i)}$\;
                         Set $j \gets 0$ and the adjoint tolerance\;
                         % $\tau_{\psi, k}^{(j)} \gets (k = 0) \:\:?\:\: \tau_{\psi}^{(0)} \::\: \min\left(\num{0.5}\cdot \gamma_2\norm{\overline{\grad} \widetilde{F}(\vb{z}_{k-1})}_2, \tau_{\psi}^{(0)} \right)$\;
                         \If{$k = 0$}{
                                $\tau_{\psi,k}^{(j)} \gets \tau_{\psi}^{(0)}$\;
                            }
                            \Else{
                                $\tau_{\psi,k}^{(j)} \gets 
                                \min\!\left(0.5 \cdot \gamma_2 
                                \norm{\overline{\grad} \widetilde{F}(\vb{z}_{k-1})}_2,
                                \tau_{\psi}^{(0)} \right)$\;
                            }
                        \Repeat{$\norm{\vb{R}_{\psi}\left(\overline{\boldsymbol{\psi}}_k;\vb{z}_k, \overline{\vb{u}(\vb{z}_k)}\right)}_2 \leq \gamma_2\norm{\overline{\grad} \widetilde{F}(\vb{z}_k)}_2$} {
                           Compute $\overline{\bm{\psi}}_k\in \RR^n$ by solving $\norm{\vb{R}_{\psi}\left(\overline{\bm{\psi}}_k;\vb{z}_k, \overline{\vb{u}(\vb{z}_k)}\right)}_2 \leq \tau_{\psi, k}^{(j)}$\;
                            $\overline{\grad} \widetilde{F}(\vb{z}_k) \gets \grad_{\vb{z}} F\left(\vb{z}_k, \overline{\vb{u}(\vb{z}_k)}\right) - \left[\pdv{\vb{R}}{\vb{z}}\Big\vert_{\left(\vb{z}_k, \overline{\vb{u}(\vb{z}_k)}\right)}\right]^\top\overline{\boldsymbol{\psi}}_k$\;
                            $j \gets j + 1$\;
                            Update: $\tau_{\psi, k}^{(j)} \gets\num{0.1}\cdot\gamma_2\norm{\overline{\grad} \widetilde{F}(\vb{z}_k)}_2$
                            %Update: $\tau_{\psi, k}^{(j)} \gets \num{0.1}\cdot \max\left(\gamma_2\norm{\overline{\grad} \widetilde{F}(\vb{z}_k)}_2, \underline{\tau_{\psi}}\right) $\;
                         }
                         $i \gets i + 1$\;
                         Update: $\tau_{\mathcal{R}, k}^{(i)} \gets \num{0.5}\cdot\gamma_1\norm{\overline{\grad} \widetilde{F}(\vb{z}_k)}_2$
                         %Update: $\tau_{\mathcal{R}, k}^{(i)} \gets \num{0.5}\cdot \max\left(\gamma_1\norm{\overline{\grad} \widetilde{F}(\vb{z}_k)}_2, \underline{\tau_{\mathcal{R}}}\right) $\;
                    }
                \end{algorithm} 

\section{Numerical Results}\label{sec:num_res}

    This section presents numerical experiments involving the DECO problems described in \cref{sec:std_cas}. The objective is to assess the computational savings obtained by using inexact adjoint gradients with adaptive tolerances, as opposed to fixed, restrictive tolerances.  In addition, we aim to determine whether  incorporating a BFGS-like update for additional flexibility in the search direction provides further computational benefits within the inexact framework. 
    To achieve these objectives, we implemented two instances of the Inexact General Descent Method (IGDM) in C++, computing inexact gradients with the adaptive procedure in \cref{alg:adap}, which keeps the gradient error tolerance proportional to the inexact gradient. Since convergence has been demonstrated under this error tolerance condition for bounded step sizes, we adopted a constant, data-aware step size that scales inversely with the number of sampled reference points. The specific step-size rule used in each problem setup is discussed in the following subsections.

    In the first instance, we consider an Inexact Gradient Descent (IGD) method defined by the search direction
    \[
        \vb{s}_k^{\mathrm{IGD}} = -\,\overline{\grad}\,\widetilde{F}(\vb{z}_k), \quad k \in \mathbb{N}_0.
    \]
    In this configuration, the inexact adjoint gradients are computed using \cref{alg:adap} with adaptive tolerance parameters \(\gamma_{1}=\SI{60}{}\) and \(\gamma_{2}=\SI{3}{}\). These parameters were chosen through preliminary grid searches and demonstrated stable performance across problem sizes, and they are consistently used in both case studies presented in this section. For reference, we also implemented a conventional Gradient Descent (GD) method, in which adjoint gradients are computed using fixed tight tolerances \(\tau_{\mathcal{R},k} = \tau_{\psi,k} = 10^{-9}\) for all \(k \in \mathbb{N}_0\), and termination occurs when the norm of the exact gradient falls below the same threshold.

    To investigate whether curvature information could provide additional savings beyond the inexact gradient scheme, we implemented a second instance of IGDM that updates an inverse-Hessian approximation through a BFGS-type recursion. 
    Starting with $\vb{H}_0^{-1} = \vb{I}_{m \times m}$, this approximation is updated at each iteration by
    \begin{equation*}
        \vb{H}_{k+1}^{-1}
        =
        \bigl(\vb{I}_{m \times m} - \rho_k \vb{p}_k \vb{y}_k^{\top}\bigr)
        \vb{H}_k^{-1}
        \bigl(\vb{I}_{m \times m} - \rho_k \vb{y}_k \vb{p}_k^{\top}\bigr)
        + \rho_k \vb{p}_k \vb{p}_k^{\top},
    \end{equation*}
    where
    \begin{equation*}
        \rho_k = \frac{1}{\vb{y}_k^{\top} \vb{p}_k}, \qquad 
        \vb{p}_k = \vb{z}_{k+1} - \vb{z}_k, \qquad
        \vb{y}_k = \overline{\grad}\,\widetilde{F}(\vb{z}_{k+1}) - \overline{\grad}\,\widetilde{F}(\vb{z}_k).
    \end{equation*}
    The corresponding search direction is then given by
    \[
        \vb{s}_k^{\mathrm{IBFGS}} = -\,\vb{H}_k^{-1}\,\overline{\grad}\,\widetilde{F}(\vb{z}_k).
    \]
    IGDM with this choice of direction yields an inexact BFGS-like method (IBFGS). 
    Due to the inexact nature of the gradient, we apply a few stabilization measures to ensure robustness. First, the adaptive tolerance parameters \(\gamma_1\) and \(\gamma_2\) are tuned separately for each case study to maintain reliable curvature updates and mitigate instabilities. In addition, a Powell-type damping is employed to preserve the symmetry and positive definiteness of the inverse-Hessian approximations \cite{powell_algorithms_1978}.

    As noted before, both IGD and IBFGS were implemented under the same adaptive tolerance framework given by \cref{alg:aisara}. A practical aspect of this implementation concerns the choice of its input parameters. The trial targets start from fixed initial tolerances $\tau_{\mathcal R}^{(0)}>0$ and $\tau_{\psi}^{(0)}>0$ and are progressively tightened during the iterations. To prevent oversolving as these targets decrease, we enforce lower bounds $\underline{\tau}_{\mathcal R}$ and $\underline{\tau}_{\psi}$, both set to $10^{-9}$ in this work. Regarding the coefficients $\gamma_1$ and $\gamma_2$, we selected their values empirically based on short preliminary runs, since their theoretical upper bounds depend on Lipschitz-type constants that are typically unknown or unreliable in practice.

    The numerical experiments for both instances of IGDM are presented separately in \cref{subsec:sec_ode} and \cref{subsec:lap_pde}, corresponding to the two case studies considered. All computations were performed on a system equipped with an Intel Core i7-8550U CPU running at \SI{1.80}{\giga\hertz}, 8~GB of RAM, and a \SI{512}{\giga\byte} SSD, using Ubuntu~24.04.2~LTS.

        \subsection{Case Study 1: Second-Order Inverse Problem}
            \label{subsec:sec_ode}

            For the second-order inverse problem described in \cref{subsec:cs_1}, both the state and adjoint equations were solved using in-house C++ implementations based on a successive over-relaxation (SOR) scheme. 
            Since the corresponding linear systems are Toeplitz tridiagonal, we used a relaxation factor close to its optimal value in the SOR iterations. 
            For the inverse reconstruction, we uniformly sampled \(n_P = 12\) mesh indices to form the set \(S\). 
            At these locations, we evaluated the analytical solution \(u(x; \vb{z})\) with the reference control vector 
            \(\vb{z}_{\mathrm{ref}} = 
            \begin{bmatrix}
            1 & -3 & -4 & 1 & 1 & 5 & 0 & 0
            \end{bmatrix}^{\top}\)
            to obtain the reference state values. 
            To ensure consistency across mesh refinements, we anchored the sampling locations so that approximately the same physical coordinates were used to compute the reference values for all tested values of \(M\). 
            Preliminary tests indicated that the optimizers operated effectively across these mesh sizes using a step size \(t = 1.15/n_P = \SI{9.583e-2}{}\). 
            The initial guess was 
            \(\vb{z}_0 = 
            \begin{bmatrix} 
            1.5 & 2.0 & -7.0 & 0.2 & -0.4 & 0.1 & 1.0 & -0.2
            \end{bmatrix}^{\top}\),
            and the stopping criterion was defined as the inexact gradient norm falling below the tolerance \(\epsilon = \SI{1e-6}{}\). 
            These settings were applied consistently across all methods. 
            It is worth noting that all optimizations successfully converged.

            To evaluate the numerical performance under these settings, we analyzed wall-clock optimization times and the associated computational savings across different mesh resolutions. 
            For each grid size \(M\) from 16 to 128, we measured total optimization time for GD and IGD and presented the corresponding runtime curves, comparing the results across small, medium, and large instances. 
            In addition, we reported the percentage-reduction curve of IGD relative to GD for \(M = 64\) through \(128\), where the gap between the methods became more pronounced, and included a horizontal line indicating the average percentage reduction over this interval. 
            We then used the IGD curve obtained from this first comparison as a baseline to benchmark the IBFGS method, applying the same range of problem sizes and reporting optimization time as a function of \(M\). 
            Finally, we also included the percentage-reduction curve of IBFGS relative to IGD, together with the average reduction over the range, which quantified the computational savings attributable to curvature information in the inexact framework.

            % To evaluate the numerical performance under these settings, we analyzed wall-clock optimization times and the associated computational savings across different mesh resolutions. 
            % For each grid size \(M\) from 16 to 128, we measured total optimization time for both methods and presented the corresponding runtime curves, comparing the results across small, medium, and large instances. 
            % In addition, we reported the percentage-reduction curve of IGD relative to GD for \(M = 64\) through \(128\), where the gap between the methods became more pronounced, and included a horizontal line indicating the average percentage reduction over this interval. 
            % We also benchmarked IBFGS against the same IGD baseline, using the same range of problem sizes and reporting optimization time as a function of \(M\). 
            % Finally, we included the percentage-reduction curve of IBFGS relative to IGD, together with the average reduction over the range, which quantified the computational savings attributable to curvature information in the inexact framework.

            The results of these comparisons are summarized in \cref{fig:cs1_runtimes}, which presents wall-clock optimization time (seconds) as a function of \(M\) for GD, IGD, and IBFGS, together with the corresponding percentage-reduction curves. 
On average, IGD reduced runtime by \(\SI{45.55}{\percent}\) relative to GD, while IBFGS achieved an average reduction of \(\SI{87.69}{\percent}\) relative to IGD. 
Thus, IBFGS consistently outperformed IGD, as expected. 
Aside from a few outliers, the pointwise reductions clustered near these averages across \(M\), indicating robust behavior. 
Further exploratory repeats with the same settings, not shown here, confirmed the same qualitative behavior. 
For IBFGS, we used \(\gamma_{1}=\SI{1e-3}{}\) and \(\gamma_{2}=\SI{1e-3}{}\). 
With these settings, curvature safeguards, the descent-check fallback, and bad-curvature events were rare across \(M\), and no resets to the identity were observed in the reported runs. 
We also verified that the counts of state and adjoint solves closely tracked the iteration count across \(M\), indicating minimal overhead from the adaptive tolerance mechanism prescribed by \cref{alg:adap}.

            \newpage
            \begin{figure}
    \centering
    % First row
    \begin{subfigure}{0.48\textwidth}
        \centering
        \includegraphics[width=\linewidth]{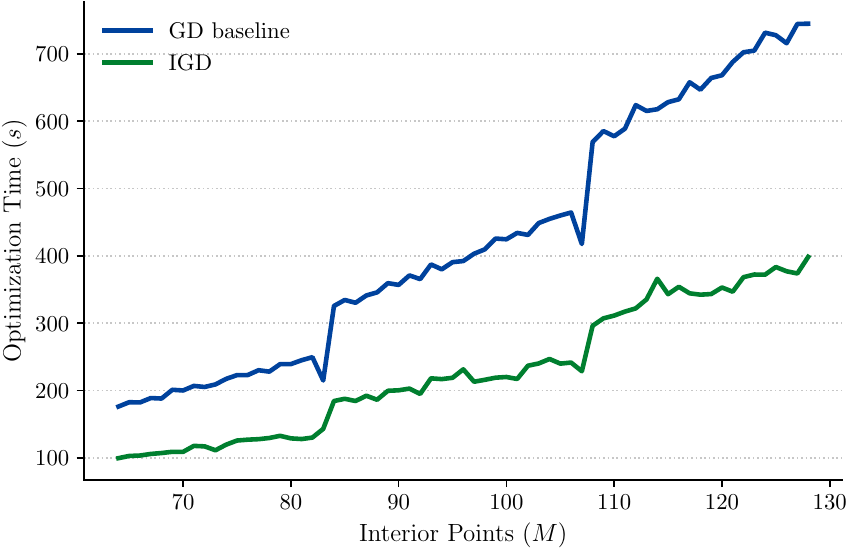}
        \caption{IGD compared with GD baseline.}
        \label{fig:cs1_igd_gamma1}
    \end{subfigure}\hfill
    \begin{subfigure}{0.5\textwidth}
        \centering
        \includegraphics[width=\linewidth]{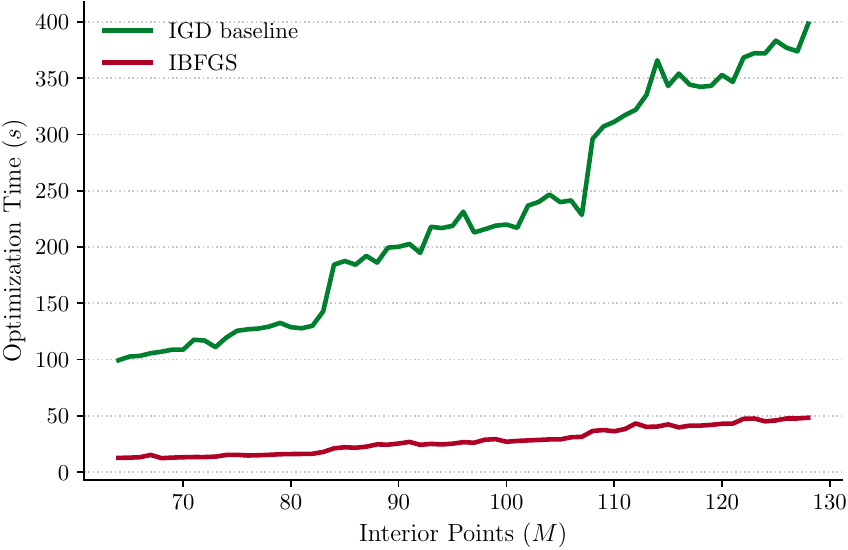}
        \caption{IBFGS compared with IGD baseline.}
        \label{fig:cs1_igd_gamma2}
    \end{subfigure}
\end{figure}

\begin{figure}
    \ContinuedFloat
    \centering
    % Second row
    \begin{subfigure}{0.48\textwidth}
        \centering
        \includegraphics[width=\linewidth]{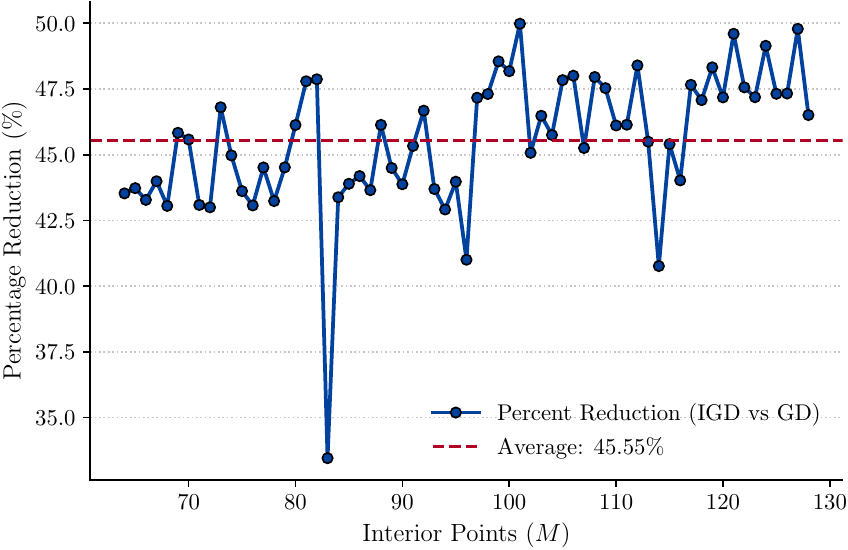}
        \caption{Percent reduction: IGD relative to GD.}
        \label{fig:cs1_ibfgs_gamma1}
    \end{subfigure}\hfill
    \begin{subfigure}{0.5\textwidth}
        \centering
        \includegraphics[width=\linewidth]{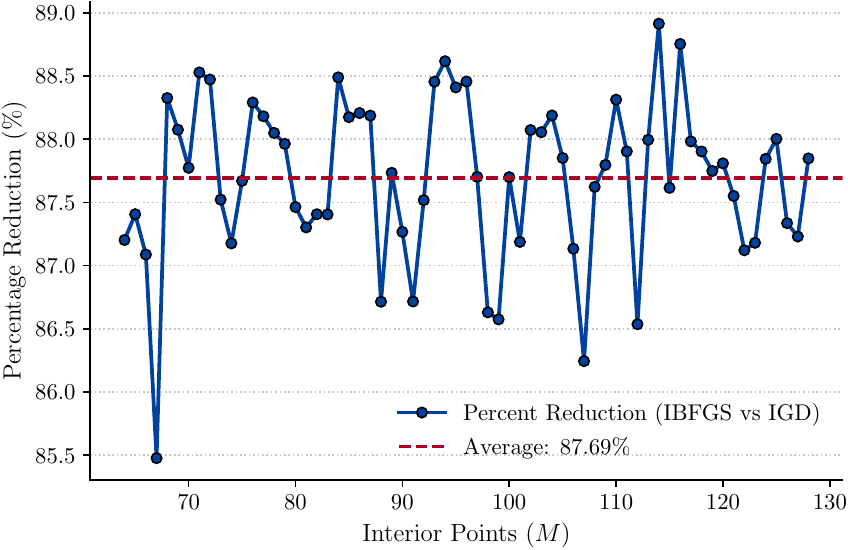}
        \caption{Percent reduction: IBFGS relative to IGD.}
        \label{fig:cs1_ibfgs_gamma2}
    \end{subfigure}
    \caption{Optimization time versus interior points \(M\) with percentage-reduction curves. IGD uses \(\gamma_{1}=\SI{60}{}\), \(\gamma_{2}=\SI{3}{}\), and IBFGS uses \(\gamma_{1}=\SI{1e-3}{}\), \(\gamma_{2}=\SI{1e-3}{}\). All methods use constant step size \(t=\SI{9.583e-2}{}\) and stopping tolerance \(\epsilon=\SI{1e-6}{}\). Horizontal lines show the mean reduction for \(64 \le M \le 128\).}
    \label{fig:cs1_runtimes}
\end{figure}

        \subsection{Case Study 2: Laplace Inverse Problem}
            \label{subsec:lap_pde}

            For the Laplace inverse problem described in \cref{subsec:cs_2}, the state equation was solved using an alternating direction implicit (ADI) scheme, where the tridiagonal subsystems in each coordinate direction were handled via Thomas’ algorithm. The adjoint equation was solved using restarted GMRES with sparse matrix–vector products and SOR preconditioning. In contrast to Case Study~1, where the number of sampled measurements was fixed, here we scaled the number of samples with the grid size to provide more data to fit and to increase the effective complexity of the reconstruction as the mesh was refined. This design is meant to test whether the algorithm can preserve its efficiency under these more demanding conditions. Specifically, for each grid size \(M\) we drew \(n_s = 4(M+1)+2\) mesh indices by Latin hypercube sampling to form \(\mathcal{I}(\Omega_d)\), and at these locations we evaluated the analytical solution at 
            \(\vb{z}_{\text{ref}} = \begin{bmatrix} 0 & 1 & -1 \end{bmatrix}^{\!\top}\) to obtain the reference state values. Since \(n_s\) increases with \(M\), we used the grid-dependent step size \(t = 3.5/(n_s - 2)\), which empirically maintained stability and yielded consistent efficiency across mesh sizes. The initial guess was \(\vb{z}_0 = \begin{bmatrix} 0.5 & 1.0 & 0.25 \end{bmatrix}^{\top}\), and the stopping criterion was set to the inexact gradient norm falling below \(\epsilon = 10^{-6}\). These configurations were uniformly applied across all methods.

            % For the Laplace inverse problem described in \cref{subsec:cs_2}, the state equation was solved using an alternating direction implicit (ADI) scheme, where the tridiagonal subsystems in each coordinate direction were handled by Thomas’ algorithm. 
            % The adjoint equation was solved using restarted GMRES with sparse matrix–vector products and SOR preconditioning. 
            % In contrast to Case Study~1, where the number of sampled measurements was fixed, here we scaled the number of samples with the grid size to provide more data to fit and to increase the effective complexity of the reconstruction as the mesh was refined. 
            % This design tested whether the algorithm preserved its efficiency under these more demanding conditions. 
            % Specifically, for each grid size \(M\), we drew \(n_s = 4(M+1)+2\) mesh indices by Latin hypercube sampling to form \(\mathcal{I}(\Omega_d)\), and at these locations we evaluated the analytical solution at 
            % \(\vb{z}_{\text{ref}} = 
            % \begin{bmatrix}
            % 0 & 1 & -1
            % \end{bmatrix}^{\!\top}\)
            % to obtain the reference state values. 
            % We used a grid-dependent step size \(t = 3.5/(n_s-2)\), which empirically maintained stability and yielded consistent efficiency across mesh sizes. 
            % The initial guess was 
            % \(\vb{z}_0 = 
            % \begin{bmatrix}
            % 0.5 & 1.0 & 0.25
            % \end{bmatrix}^{\top}\),
            % and the stopping criterion was defined as the inexact gradient norm falling below the tolerance \(\epsilon = 10^{-6}\). 
            % These configurations were applied uniformly across all methods.
            
            The same comparison procedure described in Case Study~1 was adopted here to evaluate performance across methods and mesh resolutions. 
            The results are summarized in \cref{fig:cs2_runtimes}, which presents wall-clock optimization time as a function of \(M\) for GD, IGD, and IBFGS, together with the corresponding percentage-reduction curves. 
            The qualitative pattern resembled that of Case Study~1, with one key difference: the saving for IGD over GD was much larger here, averaging \(\SI{90.21}{\percent}\), whereas the additional saving of IBFGS over IGD averaged \(\SI{79.61}{\percent}\). 
            Although the latter was smaller than in Case Study~1, it remained substantial. 
            These percentage reductions translated into larger absolute time savings than in Case Study~1, particularly for the GD-to-IGD comparison. 
            For IBFGS, we set \(\gamma_{1} = \SI{1e-1}{}\) and \(\gamma_{2} = \SI{1e-1}{}\), following the same rationale as in the previous case. 
            Pointwise reductions remained close to the averages across \(M\), confirming robust behavior as before. 
            Safeguard triggers and descent-check fallbacks were infrequent, no identity resets were observed, and the counts of state and adjoint solves closely tracked the iteration counts across \(M\), again indicating minimal overhead from the adaptive tolerance mechanism prescribed by \cref{alg:adap}.
           \begin{figure}[H]
    \centering
    % First row: IGD vs exact GD
    \begin{subfigure}{0.48\textwidth}
        \centering
        \includegraphics[width=\linewidth]{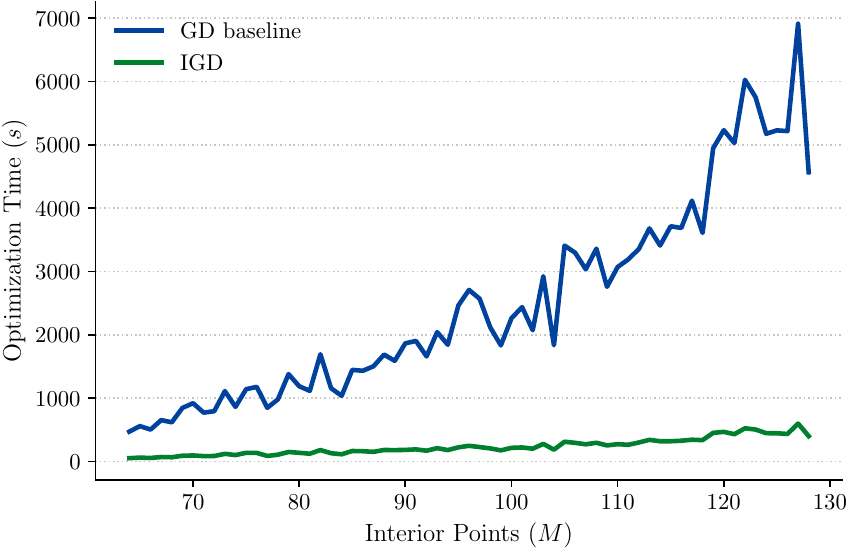}
        \caption{IGD compared with GD baseline.}
        \label{fig:cs2_igd_gamma1}
    \end{subfigure}\hfill
    \begin{subfigure}{0.5\textwidth}
        \centering
        \includegraphics[width=\linewidth]{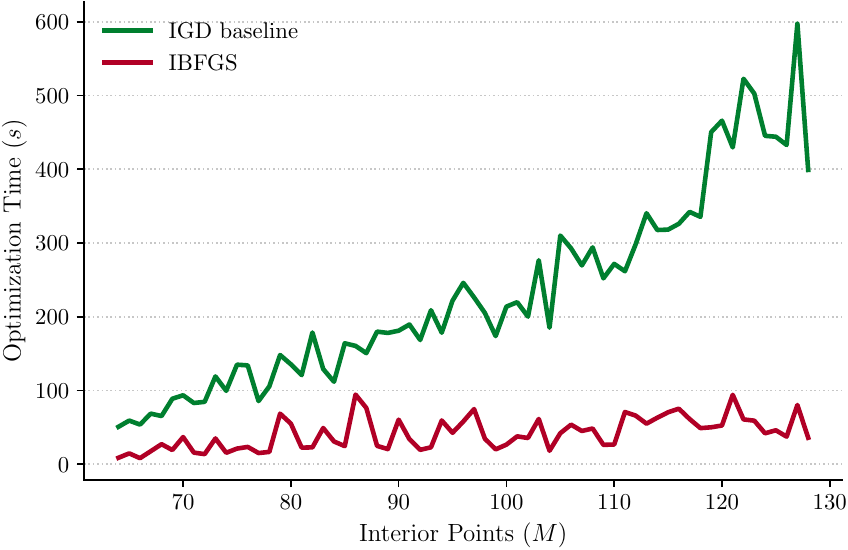}
        \caption{IBFGS compared with IGD baseline.}
        \label{fig:cs2_igd_gamma2}
    \end{subfigure}

    \vskip\baselineskip

    % Second row: IBFGS vs IGD
    \begin{subfigure}{0.48\textwidth}
        \centering
        \includegraphics[width=\linewidth]{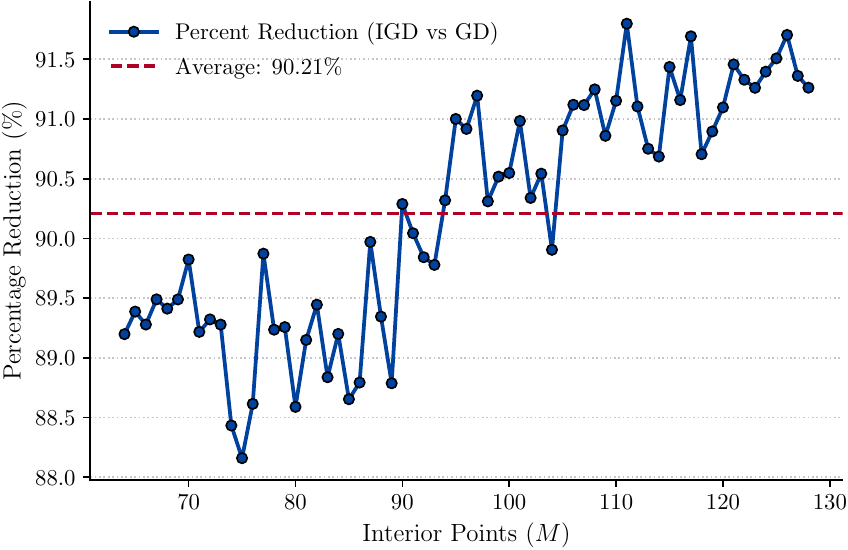}
        \caption{Percent reduction: IGD relative to GD.}
        \label{fig:cs2_ibfgs_gamma1}
    \end{subfigure}\hfill
    \begin{subfigure}{0.5\textwidth}
        \centering
        \includegraphics[width=\linewidth]{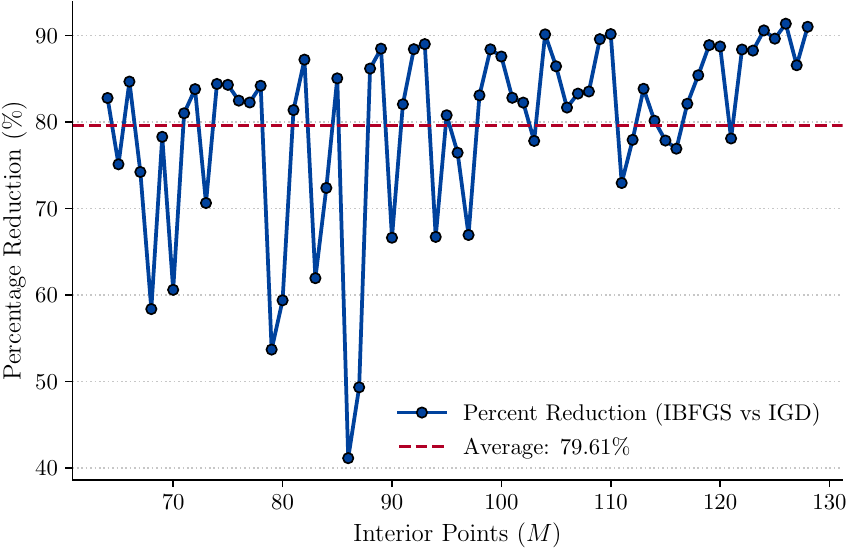}
        \caption{Percent reduction: IBFGS relative to IGD.}
        \label{fig:cs2_ibfgs_gamma2}
    \end{subfigure}

    \caption{Wall-clock optimization time as a function of interior points \(M\), together with percentage-reduction curves. IGD uses \(\gamma_{1}=\SI{60}{}\) and \(\gamma_{2}=\SI{3}{}\). IBFGS uses \(\gamma_{1}=\SI{1e-1}{}\) and \(\gamma_{2}=\SI{1e-1}{}\). All methods use stopping tolerance \(\epsilon=\SI{1e-6}{}\). Horizontal lines indicate the mean reduction for \(64 \le M \le 128\).}
    \label{fig:cs2_runtimes}
\end{figure}
            \newpage

            % \begin{figure}[H]
            %     \ContinuedFloat
            %     % Second row: IBFGS vs IGD 
            %     \vskip\baselineskip
            %     \begin{subfigure}{0.48\textwidth}
            %         \centering
            %         \resizebox{\linewidth}{!}{\input{figures/laplace_comparison/laplace_time_vs_M_exact_vs_inexact_percent_reduction.pgf}}
            %         \caption{Percent reduction: IGD relative to GD.}
            %         \label{fig:cs2_ibfgs_gamma1}
            %     \end{subfigure}\hfill
            %     \begin{subfigure}{0.48\textwidth}
            %         \centering
            %         \resizebox{\linewidth}{!}{\input{figures/laplace_comparison/laplace_time_vs_M_inexact_vs_bfgs_percent_reduction.pgf}}
            %         \caption{Percent reduction: IBFGS relative to IGD.}
            %         \label{fig:cs2_ibfgs_gamma2}
            %     \end{subfigure}
            %     \caption{Wall-clock optimization time as a function of interior points \(M\), together with percentage-reduction curves. IGD uses \(\gamma_{1}=\SI{60}{}\) and \(\gamma_{2}=\SI{3}{}\). IBFGS uses \(\gamma_{1}=\SI{1e-1}{}\) and \(\gamma_{2}=\SI{1e-1}{}\). All methods use stopping tolerance \(\epsilon=\SI{1e-6}{}\). Horizontal lines indicate the mean reduction for \(64 \le M \le 128\).}
            %     \label{fig:cs2_runtimes}
            % \end{figure}

\section{Final Remarks and Future Research}\label{sec:fin_rem}
    Motivated by DECO applications, this work introduced a novel inexact-gradient-based framework with general inexact search directions for smooth unconstrained optimization. A comprehensive convergence analysis was developed for this framework under both bounded and diminishing step-size rules. For bounded step sizes, we identified an admissible interval that ensures convergence while guaranteeing a monotonic-descent property when the error-tolerance sequence is proportional to the norm of the computed inexact gradient. For diminishing step sizes, convergence was achieved under summable error-tolerance sequences, although monotonic descent could not be ensured in this case. Furthermore, the proposed framework combines theoretical rigor and practical implementability by allowing flexible search directions and by defining acceptance conditions that depend directly on the inexact gradient rather than on its unavailable exact counterpart.

    Two instances of the framework were implemented and tested with constant step sizes: the Inexact Gradient Descent (IGD) and the Inexact BFGS-like (IBFGS) methods. These were applied to a second-order inverse problem and to a two-dimensional Laplace inverse problem. In both applications, gradients were computed via an adaptive adjoint test-and-tighten scheme, which adjusts the state and adjoint tolerances dynamically to produce gradients with controlled accuracy. The use of adaptive inexact gradients led to significant runtime reductions compared to fixed-tolerance baselines, while incorporating curvature information through the IBFGS scheme provided additional computational savings. Both IGD and IBFGS exhibited consistent performance across mesh sizes, with negligible overhead from the adaptive tolerance mechanism. For IBFGS, the implemented safeguards effectively maintained a well-conditioned inverse-Hessian approximation.

    Despite these encouraging results, several limitations remain. The study cases considered residual functions that are linear in the state variables, ensuring globally well-defined state mappings. A natural continuation is to address nonlinear residuals, where such mappings may fail to exist globally. The present framework also focuses solely on unconstrained problems, and its extension to include equality and inequality constraints remains an open and practically relevant challenge. Moreover, the numerical experiments were restricted to the bounded step-size regime because only in this setting we could prove convergence for error tolerances proportional to the inexact gradient norm. Finally, the current implementation does not employ a line-search procedure, as inexact objective evaluations make its direct use nontrivial.
    
    Future research will primarily focus on overcoming these limitations. We plan to broaden the theoretical analysis to nonlinear residual and constrained problems and to design line-search or trust-region mechanisms that incorporate precision-control explicitly. In addition, an important theoretical direction involves developing conditions that allow the use of diminishing step sizes even when error tolerances remain proportional to the inexact gradient. Beyond these extensions, further research avenues include the use of Inexact Restoration techniques to globalize the optimization process, allowing the algorithm to follow infeasible paths with respect to the state variable.

\backmatter

% If you want a separate Acknowledgements, you can keep it here AND/OR fill it in the submission form.
% \bmhead{Acknowledgements}
% We thank ...

\section*{Statements and Declarations}

\bmhead{Competing interests}
The authors have no relevant financial or non-financial interests to disclose.

\bmhead{Funding}
This work was supported by the Brazilian Federal Agency for Support and Evaluation of Graduate Education (CAPES, grant no.~88887.669708/2022-00) and by the São Paulo Research Foundation (FAPESP, grants no.~2013/07375-0, 2022/05803-3, and 2023/08706-1).

%\bmhead{Data availability}
%No datasets were generated or analyzed during the current study.

%\bmhead{Code availability}
%The numerical framework developed in this work is still under development and will be made publicly available in the future.

%\bmhead{Ethics approval}
%Not applicable.

%\bmhead{Consent to participate}
%Not applicable.

%\bmhead{Consent for publication}
%Not applicable.

%\bmhead{Materials availability}
%Not applicable.

%\bmhead{Author contribution}
%Humberto Gimenes Macedo developed the algorithms, implemented the code, and performed the numerical experiments.  
%The theoretical development and writing were carried out jointly by both authors.

\bmhead{Supplementary Information}
Animations illustrating the two study cases are available at:  
\url{https://youtu.be/X905iImwfmE} and
\url{https://youtu.be/onk9BWbxR3s}.

% ============================= References =============================
% Use your existing .bib. Ensure DOIs are in the form https://doi.org/...
\bibliography{sn-bibliography}

%% BioMed_Central_Bib_Style_v1.01

\begin{thebibliography}{68}
% BibTex style file: bmc-mathphys.bst (version 2.1), 2014-07-24
\ifx \bisbn   \undefined \def \bisbn  #1{ISBN #1}\fi
\ifx \binits  \undefined \def \binits#1{#1}\fi
\ifx \bauthor  \undefined \def \bauthor#1{#1}\fi
\ifx \batitle  \undefined \def \batitle#1{#1}\fi
\ifx \bjtitle  \undefined \def \bjtitle#1{#1}\fi
\ifx \bvolume  \undefined \def \bvolume#1{\textbf{#1}}\fi
\ifx \byear  \undefined \def \byear#1{#1}\fi
\ifx \bissue  \undefined \def \bissue#1{#1}\fi
\ifx \bfpage  \undefined \def \bfpage#1{#1}\fi
\ifx \blpage  \undefined \def \blpage #1{#1}\fi
\ifx \burl  \undefined \def \burl#1{\textsf{#1}}\fi
\ifx \doiurl  \undefined \def \doiurl#1{\url{https://doi.org/#1}}\fi
\ifx \betal  \undefined \def \betal{\textit{et al.}}\fi
\ifx \binstitute  \undefined \def \binstitute#1{#1}\fi
\ifx \binstitutionaled  \undefined \def \binstitutionaled#1{#1}\fi
\ifx \bctitle  \undefined \def \bctitle#1{#1}\fi
\ifx \beditor  \undefined \def \beditor#1{#1}\fi
\ifx \bpublisher  \undefined \def \bpublisher#1{#1}\fi
\ifx \bbtitle  \undefined \def \bbtitle#1{#1}\fi
\ifx \bedition  \undefined \def \bedition#1{#1}\fi
\ifx \bseriesno  \undefined \def \bseriesno#1{#1}\fi
\ifx \blocation  \undefined \def \blocation#1{#1}\fi
\ifx \bsertitle  \undefined \def \bsertitle#1{#1}\fi
\ifx \bsnm \undefined \def \bsnm#1{#1}\fi
\ifx \bsuffix \undefined \def \bsuffix#1{#1}\fi
\ifx \bparticle \undefined \def \bparticle#1{#1}\fi
\ifx \barticle \undefined \def \barticle#1{#1}\fi
\bibcommenthead
\ifx \bconfdate \undefined \def \bconfdate #1{#1}\fi
\ifx \botherref \undefined \def \botherref #1{#1}\fi
\ifx \url \undefined \def \url#1{\textsf{#1}}\fi
\ifx \bchapter \undefined \def \bchapter#1{#1}\fi
\ifx \bbook \undefined \def \bbook#1{#1}\fi
\ifx \bcomment \undefined \def \bcomment#1{#1}\fi
\ifx \oauthor \undefined \def \oauthor#1{#1}\fi
\ifx \citeauthoryear \undefined \def \citeauthoryear#1{#1}\fi
\ifx \endbibitem  \undefined \def \endbibitem {}\fi
\ifx \bconflocation  \undefined \def \bconflocation#1{#1}\fi
\ifx \arxivurl  \undefined \def \arxivurl#1{\textsf{#1}}\fi
\csname PreBibitemsHook\endcsname

%%% 1
\bibitem[\protect\citeauthoryear{Dembo et~al.}{1982}]{dembo_inexact_1982}
\begin{barticle}
\bauthor{\bsnm{Dembo}, \binits{R.S.}},
\bauthor{\bsnm{Eisenstat}, \binits{S.C.}},
\bauthor{\bsnm{Steihaug}, \binits{T.}}:
\batitle{Inexact {Newton} methods}.
\bjtitle{SIAM J. Numer. Anal.}
\bvolume{19}(\bissue{2}),
\bfpage{400}--\blpage{408}
(\byear{1982})
\doiurl{10.1137/0719025}
\end{barticle}
\endbibitem

%%% 2
\bibitem[\protect\citeauthoryear{Eisenstat and
  Walker}{1994}]{eisenstat_globally_1994}
\begin{barticle}
\bauthor{\bsnm{Eisenstat}, \binits{S.C.}},
\bauthor{\bsnm{Walker}, \binits{H.F.}}:
\batitle{Globally convergent inexact {Newton} methods}.
\bjtitle{SIAM J. Optim.}
\bvolume{4}(\bissue{2}),
\bfpage{393}--\blpage{422}
(\byear{1994})
\doiurl{10.1137/0804022}
\end{barticle}
\endbibitem

%%% 3
\bibitem[\protect\citeauthoryear{Eisenstat and
  Walker}{1996}]{eisenstat_choosing_1996}
\begin{barticle}
\bauthor{\bsnm{Eisenstat}, \binits{S.C.}},
\bauthor{\bsnm{Walker}, \binits{H.F.}}:
\batitle{Choosing the forcing terms in an inexact {Newton} method}.
\bjtitle{SIAM J. Sci. Comput.}
\bvolume{17}(\bissue{1}),
\bfpage{16}--\blpage{32}
(\byear{1996})
\doiurl{10.1137/0917003}
\end{barticle}
\endbibitem

%%% 4
\bibitem[\protect\citeauthoryear{Bergamaschi
  et~al.}{2001}]{bergamaschi_inexact_2001}
\begin{barticle}
\bauthor{\bsnm{Bergamaschi}, \binits{L.}},
\bauthor{\bsnm{Moret}, \binits{I.}},
\bauthor{\bsnm{Zilli}, \binits{G.}}:
\batitle{Inexact quasi-{Newton} methods for sparse systems of nonlinear
  equations}.
\bjtitle{Future Gener. Comput. Syst.}
\bvolume{18}(\bissue{1}),
\bfpage{41}--\blpage{53}
(\byear{2001})
\doiurl{10.1016/S0167-739X(00)00074-1}
\end{barticle}
\endbibitem

%%% 5
\bibitem[\protect\citeauthoryear{Birgin et~al.}{2003}]{birgin_globally_2003}
\begin{barticle}
\bauthor{\bsnm{Birgin}, \binits{E.G.}},
\bauthor{\bsnm{Kreji\'c}, \binits{N.}},
\bauthor{\bsnm{Mart{\'\i}nez}, \binits{J.M.}}:
\batitle{Globally convergent inexact quasi-{Newton} methods for solving
  nonlinear systems}.
\bjtitle{Numer. Algorithms}
\bvolume{32}(\bissue{2}),
\bfpage{249}--\blpage{260}
(\byear{2003})
\doiurl{10.1023/A:1024013824524}
\end{barticle}
\endbibitem

%%% 6
\bibitem[\protect\citeauthoryear{Arias et~al.}{2020}]{arias_global_2020}
\begin{barticle}
\bauthor{\bsnm{Arias}, \binits{C.A.}},
\bauthor{\bsnm{Mart{\'\i}nez}, \binits{H.J.}},
\bauthor{\bsnm{P{\'e}rez}, \binits{R.}}:
\batitle{Global inexact quasi-{Newton} method for nonlinear system of equations
  with constraints}.
\bjtitle{Appl. Numer. Math.}
\bvolume{150},
\bfpage{559}--\blpage{575}
(\byear{2020})
\doiurl{10.1016/j.apnum.2019.11.002}
\end{barticle}
\endbibitem

%%% 7
\bibitem[\protect\citeauthoryear{Arias and
  G{\'o}mez}{2023}]{arias_inexact_2023}
\begin{barticle}
\bauthor{\bsnm{Arias}, \binits{C.A.}},
\bauthor{\bsnm{G{\'o}mez}, \binits{C.}}:
\batitle{Inexact free derivative quasi-{Newton} method for large-scale
  nonlinear system of equations}.
\bjtitle{Numer. Algorithms}
\bvolume{94}(\bissue{3}),
\bfpage{1103}--\blpage{1123}
(\byear{2023})
\doiurl{10.1007/s11075-023-01529-6}
\end{barticle}
\endbibitem

%%% 8
\bibitem[\protect\citeauthoryear{Pi{\'e}trus
  et~al.}{2024}]{pietrus_inexact_2024}
\begin{barticle}
\bauthor{\bsnm{Pi{\'e}trus}, \binits{A.}},
\bauthor{\bsnm{Santos}, \binits{P.S.M.}},
\bauthor{\bsnm{Silva}, \binits{G.N.}}:
\batitle{Inexact quasi-{Newton} methods under a relaxed metric regularity
  assumption}.
\bjtitle{Numer. Algorithms}
(\byear{2024})
\doiurl{10.1007/s11075-024-01979-6}
\end{barticle}
\endbibitem

%%% 9
\bibitem[\protect\citeauthoryear{Mart{\'\i}nez}{1994}]{martinez_1994}
\begin{bchapter}
\bauthor{\bsnm{Mart{\'\i}nez}, \binits{J.M.}}:
\bctitle{Algorithms for solving nonlinear systems of equations}.
In: \beditor{\bsnm{Spedicato}, \binits{E.}} (ed.)
\bbtitle{Algorithms for Continuous Optimization: The State of the Art},
pp. \bfpage{81}--\blpage{108}.
\bpublisher{Springer},
\blocation{Dordrecht}
(\byear{1994}).
\doiurl{10.1007/978-94-009-0369-2_4}
\end{bchapter}
\endbibitem

%%% 10
\bibitem[\protect\citeauthoryear{Leibfritz and
  Sachs}{1999}]{leibfritz_inexact_1999}
\begin{barticle}
\bauthor{\bsnm{Leibfritz}, \binits{F.}},
\bauthor{\bsnm{Sachs}, \binits{E.W.}}:
\batitle{Inexact {SQP} interior point methods and large-scale optimal control
  problems}.
\bjtitle{SIAM J. Control Optim.}
\bvolume{38}(\bissue{1}),
\bfpage{272}--\blpage{293}
(\byear{1999})
\doiurl{10.1137/S0363012996298795}
\end{barticle}
\endbibitem

%%% 11
\bibitem[\protect\citeauthoryear{Byrd et~al.}{2008}]{byrd_inexact_2008}
\begin{barticle}
\bauthor{\bsnm{Byrd}, \binits{R.H.}},
\bauthor{\bsnm{Curtis}, \binits{F.E.}},
\bauthor{\bsnm{Nocedal}, \binits{J.}}:
\batitle{An inexact {SQP} method for equality-constrained optimization}.
\bjtitle{SIAM J. Optim.}
\bvolume{19}(\bissue{1}),
\bfpage{351}--\blpage{369}
(\byear{2008})
\doiurl{10.1137/060674004}
\end{barticle}
\endbibitem

%%% 12
\bibitem[\protect\citeauthoryear{Curtis et~al.}{2014}]{curtis_inexact_2014}
\begin{barticle}
\bauthor{\bsnm{Curtis}, \binits{F.E.}},
\bauthor{\bsnm{Johnson}, \binits{T.C.}},
\bauthor{\bsnm{Robinson}, \binits{D.P.}},
\bauthor{\bsnm{W{\"a}chter}, \binits{A.}}:
\batitle{An inexact sequential quadratic optimization algorithm for nonlinear
  optimization}.
\bjtitle{SIAM J. Optim.}
\bvolume{24}(\bissue{3}),
\bfpage{1041}--\blpage{1074}
(\byear{2014})
\doiurl{10.1137/130918320}
\end{barticle}
\endbibitem

%%% 13
\bibitem[\protect\citeauthoryear{Birgin et~al.}{2017}]{birgin_2017}
\begin{barticle}
\bauthor{\bsnm{Birgin}, \binits{E.G.}},
\bauthor{\bsnm{Gardenghi}, \binits{J.L.}},
\bauthor{\bsnm{Mart{\'\i}nez}, \binits{J.M.}},
\bauthor{\bsnm{Santos}, \binits{S.A.}},
\bauthor{\bsnm{Toint}, \binits{P.L.}}:
\batitle{Worst-case evaluation complexity for unconstrained nonlinear
  optimization using high-order regularized models}.
\bjtitle{Math. Program.}
\bvolume{163}(\bissue{1}),
\bfpage{359}--\blpage{368}
(\byear{2017})
\doiurl{10.1007/s10107-016-1065-8}
\end{barticle}
\endbibitem

%%% 14
\bibitem[\protect\citeauthoryear{Curtis et~al.}{2010}]{curtis_2010}
\begin{barticle}
\bauthor{\bsnm{Curtis}, \binits{F.E.}},
\bauthor{\bsnm{Schenk}, \binits{O.}},
\bauthor{\bsnm{W\"{a}chter}, \binits{A.}}:
\batitle{An interior-point algorithm for large-scale nonlinear optimization
  with inexact step computations}.
\bjtitle{SIAM Journal on Scientific Computing}
\bvolume{32}(\bissue{6}),
\bfpage{3447}--\blpage{3475}
(\byear{2010})
\doiurl{10.1137/090747634}
{\href{https://arxiv.org/abs/https://doi.org/10.1137/090747634}{{https://doi.org/10.1137/090747634}}}
\end{barticle}
\endbibitem

%%% 15
\bibitem[\protect\citeauthoryear{Andreani et~al.}{2008}]{andreani_2008}
\begin{barticle}
\bauthor{\bsnm{Andreani}, \binits{R.}},
\bauthor{\bsnm{Birgin}, \binits{E.G.}},
\bauthor{\bsnm{Mart{\'\i}nez}, \binits{J.M.}},
\bauthor{\bsnm{Schuverdt}, \binits{M.L.}}:
\batitle{On augmented lagrangian methods with general lower-level constraints}.
\bjtitle{SIAM J. Optim.}
\bvolume{18}(\bissue{4}),
\bfpage{1286}--\blpage{1309}
(\byear{2008})
\doiurl{10.1137/060654797}
\end{barticle}
\endbibitem

%%% 16
\bibitem[\protect\citeauthoryear{Bueno and Mart\'{\i}nez}{2020}]{bueno_2020}
\begin{barticle}
\bauthor{\bsnm{Bueno}, \binits{L.F.}},
\bauthor{\bsnm{Mart\'{\i}nez}, \binits{J.M.}}:
\batitle{On the complexity of an inexact restoration method for constrained
  optimization}.
\bjtitle{SIAM Journal on Optimization}
\bvolume{30}(\bissue{1}),
\bfpage{80}--\blpage{101}
(\byear{2020})
\doiurl{10.1137/18M1216146}
{\href{https://arxiv.org/abs/https://doi.org/10.1137/18M1216146}{{https://doi.org/10.1137/18M1216146}}}
\end{barticle}
\endbibitem

%%% 17
\bibitem[\protect\citeauthoryear{Curtis et~al.}{2019}]{curtis_2019}
\begin{barticle}
\bauthor{\bsnm{Curtis}, \binits{F.E.}},
\bauthor{\bsnm{Scheinberg}, \binits{K.}},
\bauthor{\bsnm{Shi}, \binits{R.}}:
\batitle{A stochastic trust region algorithm based on careful step
  normalization}.
\bjtitle{INFORMS J. Optim.}
\bvolume{1}(\bissue{3}),
\bfpage{200}--\blpage{220}
(\byear{2019})
\doiurl{10.1287/ijoo.2018.0010}
\end{barticle}
\endbibitem

%%% 18
\bibitem[\protect\citeauthoryear{Bellavia et~al.}{2020}]{bellavia_2020}
\begin{barticle}
\bauthor{\bsnm{Bellavia}, \binits{S.}},
\bauthor{\bsnm{Kreji{\'c}}, \binits{N.}},
\bauthor{\bsnm{Morini}, \binits{B.}}:
\batitle{Inexact restoration with subsampled trust-region methods for
  finite-sum minimization}.
\bjtitle{Comput. Optim. Appl.}
\bvolume{76}(\bissue{3}),
\bfpage{701}--\blpage{736}
(\byear{2020})
\doiurl{10.1007/s10589-020-00196-w}
\end{barticle}
\endbibitem

%%% 19
\bibitem[\protect\citeauthoryear{Bellavia et~al.}{2023}]{bellavia_2023}
\begin{barticle}
\bauthor{\bsnm{Bellavia}, \binits{S.}},
\bauthor{\bsnm{Kreji{\'c}}, \binits{N.}},
\bauthor{\bsnm{Morini}, \binits{B.}},
\bauthor{\bsnm{Rebegoldi}, \binits{S.}}:
\batitle{A stochastic first-order trust-region method with inexact restoration
  for finite-sum minimization}.
\bjtitle{Comput. Optim. Appl.}
\bvolume{84}(\bissue{1}),
\bfpage{53}--\blpage{84}
(\byear{2023})
\doiurl{10.1007/s10589-022-00430-7}
\end{barticle}
\endbibitem

%%% 20
\bibitem[\protect\citeauthoryear{Cao et~al.}{2024}]{cao_2024}
\begin{barticle}
\bauthor{\bsnm{Cao}, \binits{L.}},
\bauthor{\bsnm{Berahas}, \binits{A.S.}},
\bauthor{\bsnm{Scheinberg}, \binits{K.}}:
\batitle{First- and second-order high-probability complexity bounds for
  trust-region methods with noisy oracles}.
\bjtitle{Math. Program.}
\bvolume{207}(\bissue{1}),
\bfpage{55}--\blpage{106}
(\byear{2024})
\doiurl{10.1007/s10107-023-01999-5}
\end{barticle}
\endbibitem

%%% 21
\bibitem[\protect\citeauthoryear{Polyak}{1987}]{polyak_introduction_1987}
\begin{bbook}
\bauthor{\bsnm{Polyak}, \binits{B.T.}}:
\bbtitle{Introduction to Optimization}.
\bsertitle{Translations Series in Mathematics and Engineering}.
\bpublisher{Optimization Software, Publications Division},
\blocation{New York}
(\byear{1987})
\end{bbook}
\endbibitem

%%% 22
\bibitem[\protect\citeauthoryear{Bertsekas and
  Tsitsiklis}{2000}]{bertsekas_gradient_2000}
\begin{barticle}
\bauthor{\bsnm{Bertsekas}, \binits{D.P.}},
\bauthor{\bsnm{Tsitsiklis}, \binits{J.N.}}:
\batitle{Gradient convergence in gradient methods with errors}.
\bjtitle{SIAM J. Optim.}
\bvolume{10}(\bissue{3}),
\bfpage{627}--\blpage{642}
(\byear{2000})
\doiurl{10.1137/S1052623497331063}
\end{barticle}
\endbibitem

%%% 23
\bibitem[\protect\citeauthoryear{Nesterov}{1983}]{nesterov_method_1983}
\begin{bchapter}
\bauthor{\bsnm{Nesterov}, \binits{Y.}}:
\bctitle{A method for solving the convex programming problem with convergence
  rate $o(1/k^2)$}.
In: \bbtitle{Doklady Akademii Nauk SSSR},
vol. \bseriesno{269},
pp. \bfpage{543}--\blpage{547}
(\byear{1983}).
\bcomment{In Russian; English transl. in Soviet Math. Dokl. 27:372--376, 1983}.
\burl{https://cir.nii.ac.jp/crid/1370862715914709505}
\end{bchapter}
\endbibitem

%%% 24
\bibitem[\protect\citeauthoryear{d'Aspremont}{2008}]{daspremont_smooth_2008}
\begin{barticle}
\bauthor{\bsnm{d'Aspremont}, \binits{A.}}:
\batitle{Smooth optimization with approximate gradient}.
\bjtitle{SIAM J. Optim.}
\bvolume{19}(\bissue{3}),
\bfpage{1171}--\blpage{1183}
(\byear{2008})
\doiurl{10.1137/060676386}
\end{barticle}
\endbibitem

%%% 25
\bibitem[\protect\citeauthoryear{Devolder
  et~al.}{2014}]{devolder_first-order_2014}
\begin{barticle}
\bauthor{\bsnm{Devolder}, \binits{O.}},
\bauthor{\bsnm{Glineur}, \binits{F.}},
\bauthor{\bsnm{Nesterov}, \binits{Y.}}:
\batitle{First-order methods of smooth convex optimization with inexact
  oracle}.
\bjtitle{Math. Program.}
\bvolume{146}(\bissue{1}),
\bfpage{37}--\blpage{75}
(\byear{2014})
\doiurl{10.1007/s10107-013-0677-5}
\end{barticle}
\endbibitem

%%% 26
\bibitem[\protect\citeauthoryear{Devolder
  et~al.}{2013}]{devolder_intermediate_2013}
\begin{botherref}
\oauthor{\bsnm{Devolder}, \binits{O.}},
\oauthor{\bsnm{Glineur}, \binits{F.}},
\oauthor{\bsnm{Nesterov}, \binits{Y.}}:
Intermediate gradient methods for smooth convex problems with inexact oracle.
Technical Report CORE-2013017,
CORE, Universit{\'e} catholique de Louvain,
Louvain-la-Neuve, Belgium
(2013).
Available at \mbox{\url{https://core.ac.uk/download/pdf/34118280.pdf}}
  (accessed 17 Mar 2025)
\end{botherref}
\endbibitem

%%% 27
\bibitem[\protect\citeauthoryear{Dvurechensky and
  Gasnikov}{2016}]{dvurechensky_stochastic_2016}
\begin{barticle}
\bauthor{\bsnm{Dvurechensky}, \binits{P.}},
\bauthor{\bsnm{Gasnikov}, \binits{A.}}:
\batitle{Stochastic intermediate gradient method for convex problems with
  stochastic inexact oracle}.
\bjtitle{J. Optim. Theory Appl.}
\bvolume{171}(\bissue{1}),
\bfpage{121}--\blpage{145}
(\byear{2016})
\doiurl{10.1007/s10957-016-0999-6}
\end{barticle}
\endbibitem

%%% 28
\bibitem[\protect\citeauthoryear{Ghadimi and
  Lan}{2016}]{ghadimi_accelerated_2016}
\begin{barticle}
\bauthor{\bsnm{Ghadimi}, \binits{S.}},
\bauthor{\bsnm{Lan}, \binits{G.}}:
\batitle{Accelerated gradient methods for nonconvex nonlinear and stochastic
  programming}.
\bjtitle{Math. Program.}
\bvolume{156}(\bissue{1}),
\bfpage{59}--\blpage{99}
(\byear{2016})
\doiurl{10.1007/s10107-015-0871-8}
\end{barticle}
\endbibitem

%%% 29
\bibitem[\protect\citeauthoryear{Ghadimi
  et~al.}{2016}]{ghadimi_mini-batch_2016}
\begin{barticle}
\bauthor{\bsnm{Ghadimi}, \binits{S.}},
\bauthor{\bsnm{Lan}, \binits{G.}},
\bauthor{\bsnm{Zhang}, \binits{H.}}:
\batitle{Mini-batch stochastic approximation methods for nonconvex stochastic
  composite optimization}.
\bjtitle{Math. Program.}
\bvolume{155}(\bissue{1}),
\bfpage{267}--\blpage{305}
(\byear{2016})
\doiurl{10.1007/s10107-014-0846-1}
\end{barticle}
\endbibitem

%%% 30
\bibitem[\protect\citeauthoryear{Khanh et~al.}{2024a}]{khanh_inexact_2023}
\begin{barticle}
\bauthor{\bsnm{Khanh}, \binits{P.D.}},
\bauthor{\bsnm{Mordukhovich}, \binits{B.S.}},
\bauthor{\bsnm{Tran}, \binits{D.B.}}:
\batitle{Inexact reduced gradient methods in nonconvex optimization}.
\bjtitle{J. Optim. Theory Appl.}
\bvolume{203}(\bissue{3}),
\bfpage{2138}--\blpage{2178}
(\byear{2024})
\doiurl{10.1007/s10957-023-02319-9}
\end{barticle}
\endbibitem

%%% 31
\bibitem[\protect\citeauthoryear{Khanh et~al.}{2024b}]{khanh_new_2024}
\begin{barticle}
\bauthor{\bsnm{Khanh}, \binits{P.D.}},
\bauthor{\bsnm{Mordukhovich}, \binits{B.S.}},
\bauthor{\bsnm{Tran}, \binits{D.B.}}:
\batitle{A new inexact gradient descent method with applications to nonsmooth
  convex optimization}.
\bjtitle{Optim. Methods Softw.}
\bvolume{39},
\bfpage{1}--\blpage{29}
(\byear{2024})
\doiurl{10.1080/10556788.2024.2322700}
\end{barticle}
\endbibitem

%%% 32
\bibitem[\protect\citeauthoryear{Heinkenschloss and
  Ridzal}{2008}]{heinkenschloss_inexact_2008}
\begin{bchapter}
\bauthor{\bsnm{Heinkenschloss}, \binits{M.}},
\bauthor{\bsnm{Ridzal}, \binits{D.}}:
\bctitle{An inexact trust-region {SQP} method with applications to
  {PDE}-constrained optimization}.
In: \beditor{\bsnm{Kunisch}, \binits{K.}},
\beditor{\bsnm{Of}, \binits{G.}},
\beditor{\bsnm{Steinbach}, \binits{O.}} (eds.)
\bbtitle{Numerical Mathematics and Advanced Applications},
pp. \bfpage{613}--\blpage{620}.
\bpublisher{Springer},
\blocation{Berlin, Heidelberg}
(\byear{2008}).
\doiurl{10.1007/978-3-540-69777-0_73}
\end{bchapter}
\endbibitem

%%% 33
\bibitem[\protect\citeauthoryear{Ziems and Ulbrich}{2011}]{ziems_adaptive_2011}
\begin{barticle}
\bauthor{\bsnm{Ziems}, \binits{J.C.}},
\bauthor{\bsnm{Ulbrich}, \binits{S.}}:
\batitle{Adaptive multilevel inexact {SQP} methods for {PDE}-constrained
  optimization}.
\bjtitle{SIAM J. Optim.}
\bvolume{21}(\bissue{1}),
\bfpage{1}--\blpage{40}
(\byear{2011})
\doiurl{10.1137/080743160}
\end{barticle}
\endbibitem

%%% 34
\bibitem[\protect\citeauthoryear{Brown and
  Nadarajah}{2018a}]{brown_adaptive_2018}
\begin{bchapter}
\bauthor{\bsnm{Brown}, \binits{D.A.}},
\bauthor{\bsnm{Nadarajah}, \binits{S.}}:
\bctitle{An adaptive constraint tolerance method for optimization algorithms
  based on the discrete adjoint method}.
In: \bbtitle{Proceedings of the 2018 AIAA/ASCE/AHS/ASC Structures, Structural
  Dynamics, and Materials Conference},
\bconflocation{Kissimmee, FL, USA}
(\byear{2018}).
\doiurl{10.2514/6.2018-0414} .
\bcomment{American Institute of Aeronautics and Astronautics (AIAA). AIAA Paper
  2018--0414}
\end{bchapter}
\endbibitem

%%% 35
\bibitem[\protect\citeauthoryear{Brown and
  Nadarajah}{2018b}]{brown_inexactly_2018}
\begin{barticle}
\bauthor{\bsnm{Brown}, \binits{D.A.}},
\bauthor{\bsnm{Nadarajah}, \binits{S.}}:
\batitle{Inexactly constrained discrete adjoint approach for
  steepest-descent-based optimization algorithms}.
\bjtitle{Numer. Algorithms}
\bvolume{78}(\bissue{3}),
\bfpage{983}--\blpage{1000}
(\byear{2018})
\doiurl{10.1007/s11075-017-0409-7}
\end{barticle}
\endbibitem

%%% 36
\bibitem[\protect\citeauthoryear{Brown and Nadarajah}{2022}]{brown_effect_2022}
\begin{barticle}
\bauthor{\bsnm{Brown}, \binits{D.A.}},
\bauthor{\bsnm{Nadarajah}, \binits{S.}}:
\batitle{Effect of inexact adjoint solutions on the discrete-adjoint approach
  to gradient-based optimization}.
\bjtitle{Optim. Eng.}
\bvolume{23}(\bissue{3}),
\bfpage{1643}--\blpage{1676}
(\byear{2022})
\doiurl{10.1007/s11081-021-09681-5}
\end{barticle}
\endbibitem

%%% 37
\bibitem[\protect\citeauthoryear{Hicken and
  Ashley}{2015}]{hicken_arnoldi-based_2015}
\begin{bchapter}
\bauthor{\bsnm{Hicken}, \binits{J.E.}},
\bauthor{\bsnm{Ashley}, \binits{A.}}:
\bctitle{Arnoldi-based sampling for high-dimensional optimization using
  imperfect data}.
In: \bbtitle{Proceedings of the 16th AIAA/ISSMO Multidisciplinary Analysis and
  Optimization Conference},
\bconflocation{Dallas, TX, USA}
(\byear{2015}).
\doiurl{10.2514/6.2015-2943} .
\bcomment{American Institute of Aeronautics and Astronautics (AIAA). AIAA Paper
  2015--2943}
\end{bchapter}
\endbibitem

%%% 38
\bibitem[\protect\citeauthoryear{Antil et~al.}{2018}]{antil_frontiers_2018}
\begin{bbook}
\beditor{\bsnm{Antil}, \binits{H.}},
\beditor{\bsnm{Kouri}, \binits{D.P.}},
\beditor{\bsnm{Lacasse}, \binits{M.-D.}},
\beditor{\bsnm{Ridzal}, \binits{D.}} (eds.):
\bbtitle{Frontiers in PDE-constrained Optimization}.
\bsertitle{IMA Vol. Math. Appl.},
vol. \bseriesno{163}.
\bpublisher{Springer},
\blocation{New York, NY}
(\byear{2018})
\end{bbook}
\endbibitem

%%% 39
\bibitem[\protect\citeauthoryear{Liu and Wang}{2019}]{liu_non-commutative_2019}
\begin{barticle}
\bauthor{\bsnm{Liu}, \binits{J.}},
\bauthor{\bsnm{Wang}, \binits{Z.}}:
\batitle{Non-commutative discretize-then-optimize algorithms for elliptic
  {PDE}-constrained optimal control problems}.
\bjtitle{J. Comput. Appl. Math.}
\bvolume{362},
\bfpage{596}--\blpage{613}
(\byear{2019})
\doiurl{10.1016/j.cam.2018.07.028}
\end{barticle}
\endbibitem

%%% 40
\bibitem[\protect\citeauthoryear{Collis and
  Heinkenschloss}{2024}]{collis_analysis_2024}
\begin{barticle}
\bauthor{\bsnm{Collis}, \binits{S.S.}},
\bauthor{\bsnm{Heinkenschloss}, \binits{M.}}:
\batitle{Analysis of the {SUPG} method for the solution of optimal control
  problems}.
\bjtitle{arXiv Preprint}
(\byear{2024})
\doiurl{10.48550/arXiv.2411.09828}
{\href{https://arxiv.org/abs/2411.09828}{{2411.09828}}}
\end{barticle}
\endbibitem

%%% 41
\bibitem[\protect\citeauthoryear{Haftka}{1985}]{haftka_simultaneous_1985}
\begin{barticle}
\bauthor{\bsnm{Haftka}, \binits{R.T.}}:
\batitle{Simultaneous analysis and design}.
\bjtitle{AIAA J.}
\bvolume{23}(\bissue{7}),
\bfpage{1099}--\blpage{1103}
(\byear{1985})
\doiurl{10.2514/3.9043}
\end{barticle}
\endbibitem

%%% 42
\bibitem[\protect\citeauthoryear{Cramer et~al.}{1994}]{cramer_problem_1994}
\begin{barticle}
\bauthor{\bsnm{Cramer}, \binits{E.J.}},
\bauthor{\bsnm{J.~E.~Dennis}, \binits{J.}},
\bauthor{\bsnm{Frank}, \binits{P.D.}},
\bauthor{\bsnm{Lewis}, \binits{R.M.}},
\bauthor{\bsnm{Shubin}, \binits{G.R.}}:
\batitle{Problem formulation for multidisciplinary optimization}.
\bjtitle{SIAM J. Optim.}
\bvolume{4}(\bissue{4}),
\bfpage{754}--\blpage{776}
(\byear{1994})
\doiurl{10.1137/0804044}
\end{barticle}
\endbibitem

%%% 43
\bibitem[\protect\citeauthoryear{Hicken and
  Alonso}{2013}]{hicken_comparison_2013}
\begin{bchapter}
\bauthor{\bsnm{Hicken}, \binits{J.E.}},
\bauthor{\bsnm{Alonso}, \binits{J.J.}}:
\bctitle{Comparison of reduced- and full-space algorithms for {PDE}-constrained
  optimization}.
In: \bbtitle{Proceedings of the 51st AIAA Aerospace Sciences Meeting},
\bconflocation{Grapevine, TX, USA}
(\byear{2013}).
\doiurl{10.2514/6.2013-1043} .
\bcomment{American Institute of Aeronautics and Astronautics (AIAA). AIAA Paper
  2013--1043}
\end{bchapter}
\endbibitem

%%% 44
\bibitem[\protect\citeauthoryear{Jameson}{1991}]{jameson_airfoils_1991}
\begin{bchapter}
\bauthor{\bsnm{Jameson}, \binits{A.}}:
\bctitle{Airfoils admitting non-unique solutions of the {Euler} equations}.
In: \bbtitle{Proceedings of the 22nd Fluid Dynamics, Plasma Dynamics and Lasers
  Conference},
\bconflocation{Honolulu, HI, USA}
(\byear{1991}).
\doiurl{10.2514/6.1991-1625} .
\bcomment{American Institute of Aeronautics and Astronautics (AIAA). AIAA Paper
  1991--1625}
\end{bchapter}
\endbibitem

%%% 45
\bibitem[\protect\citeauthoryear{Borggaard
  et~al.}{1998}]{borggaard_computational_1998}
\begin{bbook}
\beditor{\bsnm{Borggaard}, \binits{J.}},
\beditor{\bsnm{Burns}, \binits{J.}},
\beditor{\bsnm{Cliff}, \binits{E.}},
\beditor{\bsnm{Schreck}, \binits{S.}} (eds.):
\bbtitle{Computational Methods for Optimal Design and Control}.
\bpublisher{Birkhäuser},
\blocation{Boston, MA}
(\byear{1998})
\end{bbook}
\endbibitem

%%% 46
\bibitem[\protect\citeauthoryear{Abadie}{1969}]{abadie_GRG_1969}
\begin{botherref}
\oauthor{\bsnm{Abadie}, \binits{J.}}:
Generalization of the wolfe reduced gradient method to the case of nonlinear
  constraints.
Optimization,
37--47
(1969)
\end{botherref}
\endbibitem

%%% 47
\bibitem[\protect\citeauthoryear{Lasdon et~al.}{1978}]{lasdon_1978}
\begin{barticle}
\bauthor{\bsnm{Lasdon}, \binits{L.S.}},
\bauthor{\bsnm{Waren}, \binits{A.D.}},
\bauthor{\bsnm{Jain}, \binits{A.}},
\bauthor{\bsnm{Ratner}, \binits{M.}}:
\batitle{Design and testing of a generalized reduced gradient code for
  nonlinear programming}.
\bjtitle{ACM Trans. Math. Softw.}
\bvolume{4}(\bissue{1}),
\bfpage{34}--\blpage{50}
(\byear{1978})
\doiurl{10.1145/355769.355773}
\end{barticle}
\endbibitem

%%% 48
\bibitem[\protect\citeauthoryear{Heinkenschloss and
  Vicente}{1999}]{heinkenschloss_interface_1999}
\begin{barticle}
\bauthor{\bsnm{Heinkenschloss}, \binits{M.}},
\bauthor{\bsnm{Vicente}, \binits{L.N.}}:
\batitle{An interface optimization and application for the numerical solution
  of optimal control problems}.
\bjtitle{ACM Trans. Math. Softw.}
\bvolume{25}(\bissue{2}),
\bfpage{157}--\blpage{190}
(\byear{1999})
\doiurl{10.1145/317275.317278}
\end{barticle}
\endbibitem

%%% 49
\bibitem[\protect\citeauthoryear{Zingg et~al.}{2008}]{zingg_comparative_2008}
\begin{barticle}
\bauthor{\bsnm{Zingg}, \binits{D.W.}},
\bauthor{\bsnm{Nemec}, \binits{M.}},
\bauthor{\bsnm{Pulliam}, \binits{T.H.}}:
\batitle{A comparative evaluation of genetic and gradient-based algorithms
  applied to aerodynamic optimization}.
\bjtitle{Eur. J. Comput. Mech.}
\bvolume{17}(\bissue{1-2}),
\bfpage{103}--\blpage{126}
(\byear{2008})
\doiurl{10.3166/remn.17.103-126}
\end{barticle}
\endbibitem

%%% 50
\bibitem[\protect\citeauthoryear{Holst and
  Pulliam}{2001}]{holst_aerodynamic_2001}
\begin{bchapter}
\bauthor{\bsnm{Holst}, \binits{T.}},
\bauthor{\bsnm{Pulliam}, \binits{T.}}:
\bctitle{Aerodynamic shape optimization using a real-number-encoded genetic
  algorithm}.
In: \bbtitle{Proceedings of the 19th AIAA Applied Aerodynamics Conference},
\bconflocation{Anaheim, CA, USA}
(\byear{2001}).
\doiurl{10.2514/6.2001-2473} .
\bcomment{American Institute of Aeronautics and Astronautics (AIAA). AIAA Paper
  2001--2473}
\end{bchapter}
\endbibitem

%%% 51
\bibitem[\protect\citeauthoryear{Chernukhin and
  Zingg}{2013}]{chernukhin_multimodality_2013}
\begin{barticle}
\bauthor{\bsnm{Chernukhin}, \binits{O.}},
\bauthor{\bsnm{Zingg}, \binits{D.W.}}:
\batitle{Multimodality and global optimization in aerodynamic design}.
\bjtitle{AIAA J.}
\bvolume{51}(\bissue{6}),
\bfpage{1342}--\blpage{1354}
(\byear{2013})
\doiurl{10.2514/1.J051835}
\end{barticle}
\endbibitem

%%% 52
\bibitem[\protect\citeauthoryear{He and Agarwal}{2014}]{he_shape_2014}
\begin{bchapter}
\bauthor{\bsnm{He}, \binits{Y.}},
\bauthor{\bsnm{Agarwal}, \binits{R.K.}}:
\bctitle{Shape optimization of {NREL} {S809} airfoil for wind turbine blades
  using a multi-objective genetic algorithm}.
In: \bbtitle{Proceedings of the 32nd AIAA Applied Aerodynamics Conference},
\bconflocation{Atlanta, GA, USA}
(\byear{2014}).
\doiurl{10.2514/6.2014-2845} .
\bcomment{American Institute of Aeronautics and Astronautics (AIAA). AIAA Paper
  2014--2845}
\end{bchapter}
\endbibitem

%%% 53
\bibitem[\protect\citeauthoryear{Secco
  et~al.}{2018}]{secco_component-based_2018}
\begin{barticle}
\bauthor{\bsnm{Secco}, \binits{N.R.}},
\bauthor{\bsnm{Jasa}, \binits{J.P.}},
\bauthor{\bsnm{Kenway}, \binits{G.K.W.}},
\bauthor{\bsnm{Martins}, \binits{J.R.R.A.}}:
\batitle{Component-based geometry manipulation for aerodynamic shape
  optimization with overset meshes}.
\bjtitle{AIAA J.}
\bvolume{56}(\bissue{9}),
\bfpage{3667}--\blpage{3679}
(\byear{2018})
\doiurl{10.2514/1.J056550}
\end{barticle}
\endbibitem

%%% 54
\bibitem[\protect\citeauthoryear{Secco and
  Martins}{2019}]{secco_rans-based_2019}
\begin{barticle}
\bauthor{\bsnm{Secco}, \binits{N.R.}},
\bauthor{\bsnm{Martins}, \binits{J.R.R.A.}}:
\batitle{{RANS}-based aerodynamic shape optimization of a strut-braced wing
  with overset meshes}.
\bjtitle{J. Aircraft}
\bvolume{56}(\bissue{1}),
\bfpage{217}--\blpage{227}
(\byear{2019})
\doiurl{10.2514/1.C034934}
\end{barticle}
\endbibitem

%%% 55
\bibitem[\protect\citeauthoryear{Squire and Trapp}{1998}]{squire_using_1998}
\begin{barticle}
\bauthor{\bsnm{Squire}, \binits{W.}},
\bauthor{\bsnm{Trapp}, \binits{G.}}:
\batitle{Using complex variables to estimate derivatives of real functions}.
\bjtitle{SIAM Rev.}
\bvolume{40}(\bissue{1}),
\bfpage{110}--\blpage{112}
(\byear{1998})
\doiurl{10.1137/S003614459631241X}
\end{barticle}
\endbibitem

%%% 56
\bibitem[\protect\citeauthoryear{Bryson and Ho}{1975}]{bryson_applied_1975}
\begin{bbook}
\bauthor{\bsnm{Bryson}, \binits{A.E.}},
\bauthor{\bsnm{Ho}, \binits{Y.-C.}}:
\bbtitle{Applied Optimal Control: Optimization, Estimation and Control}.
\bpublisher{Routledge},
\blocation{New York}
(\byear{1975})
\end{bbook}
\endbibitem

%%% 57
\bibitem[\protect\citeauthoryear{Wengert}{1964}]{wengert_simple_1964}
\begin{barticle}
\bauthor{\bsnm{Wengert}, \binits{R.E.}}:
\batitle{A simple automatic derivative evaluation program}.
\bjtitle{Commun. ACM}
\bvolume{7}(\bissue{8}),
\bfpage{463}--\blpage{464}
(\byear{1964})
\doiurl{10.1145/355586.364791}
\end{barticle}
\endbibitem

%%% 58
\bibitem[\protect\citeauthoryear{Kenway et~al.}{2019}]{kenway_effective_2019}
\begin{barticle}
\bauthor{\bsnm{Kenway}, \binits{G.K.W.}},
\bauthor{\bsnm{Mader}, \binits{C.A.}},
\bauthor{\bsnm{He}, \binits{P.}},
\bauthor{\bsnm{Martins}, \binits{J.R.R.A.}}:
\batitle{Effective adjoint approaches for computational fluid dynamics}.
\bjtitle{Prog. Aerosp. Sci.}
\bvolume{110},
\bfpage{100542}
(\byear{2019})
\doiurl{10.1016/j.paerosci.2019.05.002}
\end{barticle}
\endbibitem

%%% 59
\bibitem[\protect\citeauthoryear{Mader et~al.}{2008}]{mader_adjoint_2008}
\begin{barticle}
\bauthor{\bsnm{Mader}, \binits{C.A.}},
\bauthor{\bsnm{Martins}, \binits{J.R.R.A.}},
\bauthor{\bsnm{Alonso}, \binits{J.J.}},
\bauthor{\bsnm{Weide}, \binits{E.}}:
\batitle{{ADjoint}: An approach for the rapid development of discrete adjoint
  solvers}.
\bjtitle{AIAA J.}
\bvolume{46}(\bissue{4}),
\bfpage{863}--\blpage{873}
(\byear{2008})
\doiurl{10.2514/1.29123}
\end{barticle}
\endbibitem

%%% 60
\bibitem[\protect\citeauthoryear{McKay et~al.}{1979}]{mckay_comparison_1979}
\begin{barticle}
\bauthor{\bsnm{McKay}, \binits{M.D.}},
\bauthor{\bsnm{Beckman}, \binits{R.J.}},
\bauthor{\bsnm{Conover}, \binits{W.J.}}:
\batitle{Comparison of three methods for selecting values of input variables in
  the analysis of output from a computer code}.
\bjtitle{Technometrics}
\bvolume{21}(\bissue{2}),
\bfpage{239}--\blpage{245}
(\byear{1979})
\doiurl{10.1080/00401706.1979.10489755}
\end{barticle}
\endbibitem

%%% 61
\bibitem[\protect\citeauthoryear{Beck}{2017}]{beck_first-order_2017}
\begin{bbook}
\bauthor{\bsnm{Beck}, \binits{A.}}:
\bbtitle{First-Order Methods in Optimization}.
\bpublisher{SIAM},
\blocation{Philadelphia, PA}
(\byear{2017})
\end{bbook}
\endbibitem

%%% 62
\bibitem[\protect\citeauthoryear{Devolder}{2011}]{devolder_stochastic_2011}
\begin{botherref}
\oauthor{\bsnm{Devolder}, \binits{O.}}:
Stochastic first-order methods in smooth convex optimization.
Discussion Paper 2011/70,
CORE, Universit{\'e} catholique de Louvain,
Louvain-la-Neuve, Belgium
(2011).
{A}vailable at \mbox{\url{http://hdl.handle.net/2078.1/105693}} (accessed 16
  Sep 2025)
\end{botherref}
\endbibitem

%%% 63
\bibitem[\protect\citeauthoryear{Devolder}{2013}]{devolder_exactness_2013}
\begin{botherref}
\oauthor{\bsnm{Devolder}, \binits{O.}}:
Exactness, inexactness and stochasticity in first-order methods for large-scale
  convex optimization.
PhD thesis,
Universit{\'e} catholique de Louvain,
Louvain-la-Neuve, Belgium
(2013).
{A}vailable at \mbox{\url{http://hdl.handle.net/2078.1/105693}} (accessed 16
  Sep 2025)
\end{botherref}
\endbibitem

%%% 64
\bibitem[\protect\citeauthoryear{Vasin et~al.}{2023}]{vasin_accelerated_2023}
\begin{barticle}
\bauthor{\bsnm{Vasin}, \binits{A.}},
\bauthor{\bsnm{Gasnikov}, \binits{A.}},
\bauthor{\bsnm{Dvurechensky}, \binits{P.}},
\bauthor{\bsnm{Spokoiny}, \binits{V.}}:
\batitle{Accelerated gradient methods with absolute and relative noise in the
  gradient}.
\bjtitle{Optim. Methods Softw.}
\bvolume{38}(\bissue{6}),
\bfpage{1180}--\blpage{1229}
(\byear{2023})
\doiurl{10.1080/10556788.2023.2212503}
\end{barticle}
\endbibitem

%%% 65
\bibitem[\protect\citeauthoryear{Bertsekas}{2016}]{bertsekas_nonlinear_2016}
\begin{bbook}
\bauthor{\bsnm{Bertsekas}, \binits{D.P.}}:
\bbtitle{Nonlinear Programming},
\bedition{3}rd edn.
\bpublisher{Athena Scientific},
\blocation{Belmont, MA}
(\byear{2016})
\end{bbook}
\endbibitem

%%% 66
\bibitem[\protect\citeauthoryear{Forsgren
  et~al.}{1995}]{forsgren_gill_murray_1995}
\begin{barticle}
\bauthor{\bsnm{Forsgren}, \binits{A.}},
\bauthor{\bsnm{Gill}, \binits{P.E.}},
\bauthor{\bsnm{Murray}, \binits{W.}}:
\batitle{Computing modified newton directions using a partial cholesky
  factorization}.
\bjtitle{SIAM J. Sci. Comput.}
\bvolume{16}(\bissue{1}),
\bfpage{139}--\blpage{150}
(\byear{1995})
\doiurl{10.1137/0916009}
\end{barticle}
\endbibitem

%%% 67
\bibitem[\protect\citeauthoryear{Berahas
  et~al.}{2022}]{berahas_theoretical_2022}
\begin{barticle}
\bauthor{\bsnm{Berahas}, \binits{A.S.}},
\bauthor{\bsnm{Cao}, \binits{L.}},
\bauthor{\bsnm{Choromanski}, \binits{K.}},
\bauthor{\bsnm{Scheinberg}, \binits{K.}}:
\batitle{A theoretical and empirical comparison of gradient approximations in
  derivative-free optimization}.
\bjtitle{Found. Comput. Math.}
\bvolume{22}(\bissue{2}),
\bfpage{507}--\blpage{560}
(\byear{2022})
\doiurl{10.1007/s10208-021-09513-z}
\end{barticle}
\endbibitem

%%% 68
\bibitem[\protect\citeauthoryear{Powell}{1978}]{powell_algorithms_1978}
\begin{barticle}
\bauthor{\bsnm{Powell}, \binits{M.J.D.}}:
\batitle{Algorithms for nonlinear constraints that use {Lagrangian} functions}.
\bjtitle{Math. Program.}
\bvolume{14}(\bissue{1}),
\bfpage{224}--\blpage{248}
(\byear{1978})
\doiurl{10.1007/BF01588967}
\end{barticle}
\endbibitem

\end{thebibliography}

\end{document}